\newcommand{\R}{{\mathbb R}}
\def\C{{\mathbb C}}
\renewcommand{\P}{{\mathbb P}}
\newcommand{\N}{{\mathbb N}}
\newcommand{\rmi}{\mathrm{i}}
\newcommand{\ep}{ \varepsilon  }
\newcommand{\vp}{\varphi  }
\newcommand{\D}{\mathcal{D}}
\newcommand{\ind}{\mathrm{ind}}
\newcommand{\supp}{\mathrm{supp}}
\newcommand{\APS}{\mathrm{APS}}
\newcommand{\loc}{\mathrm{loc}}
\newcommand{\id}{{\ensuremath{\mathds{1}}}}
\newcommand{\dV}{\mathrm{dV}}
\newcommand{\dA}{\mathrm{dA}}
\newcommand{\dt}{\mathrm{dt}}
\newcommand{\res}{\mathrm{res}}
\newcommand{\FE}{F\hspace{-0.8mm}E}
\newcommand{\gc}{\check g}
\newcommand{\Dc}{\check\D}
\newcommand{\Hc}{\check H}
\newcommand{\betac}{\check\beta}
\newcommand{\nablac}{\check\nabla}
\newcommand{\gd}{\hat g}
\newcommand{\Dd}{\hat D}
\newcommand{\Bd}{\hat B}
\newcommand{\betad}{\hat\beta}
\newcommand{\nablad}{\hat\nabla}
\newcommand{\Adach}{{\widehat{\mathrm{A}}}}
\newcommand{\TAdach}{{\mathrm{T}\widehat{\mathrm{A}}}}
\newcommand{\adach}{{\widehat{\mathrm{a}}}}
\newcommand{\ch}{\mathrm{ch}}
\newcommand{\SF}{\mathrm{sf}}
\newcommand{\Qpp}{Q_{++}(t_2,t_1)}
\newcommand{\Qpm}{Q_{+-}(t_2,t_1)}
\newcommand{\Qmp}{Q_{-+}(t_2,t_1)}
\newcommand{\Qmm}{Q_{--}(t_2,t_1)}
\newcommand{\aAPS}{\mathrm{aAPS}}
\newcommand{\spec}{\mathrm{spec}}
\newcommand{\T}{\mathcal{T}}
\newcommand{\G}{\mathcal{G}}
\newcommand{\ausblenden}[2]{#2}
\newcommand{\<}{\langle}
\renewcommand{\>}{\rangle}
\newcommand{\leqnomode}{\tagsleft@true}
\newcommand{\reqnomode}{\tagsleft@false}
 \newtheorem{theorem}{Theorem}[section]
 \newtheorem{lem}[theorem]{Lemma}
 \newtheorem{cor}[theorem]{Corollary}
 \newtheorem*{maintheorem}{Main Theorem}
 \theoremstyle{definition}
 \newtheorem{rem}[theorem]{Remark}
 \newtheorem{rems}[theorem]{Remarks}
\title[An index theorem for Lorentzian manifolds]{An index theorem for Lorentzian manifolds with compact spacelike Cauchy boundary}
\author[C. B\"ar]{Christian B\"ar}
\address{Institute for Mathematics, Potsdam University, Karl-Liebknecht-Str.~24-25, 14476 Potsdam, Germany}
\email{baer@math.uni-potsdam.de}
\author[A. Strohmaier]{Alexander Strohmaier}
\address{School of Mathematics,  University of Leeds,  Leeds, LS2 9JT, UK} 
\email{a.strohmaier@leeds.ac.uk}
\date{\today}
\keywords{Dirac operator on Lorentzian manifold, Atiyah-Patodi-Singer boundary conditions, index theorem, Feynman parametrix, wave evolution, spectral flow, chiral anomaly}
\subjclass[2010]{Primary 58J20, 58J45; secondary 35L03, 58J30, 58J40}
\begin{document}

\begin{abstract}
 We show that the Dirac operator on a compact globally hyperbolic Lorentzian spacetime with spacelike Cauchy boundary is a Fredholm operator if appropriate boundary conditions are imposed.
 We  prove that the index of this operator is given by the same expression as in the index formula of Atiyah-Patodi-Singer for Riemannian manifolds with boundary.
 The index is also shown to equal that of a certain operator constructed from the evolution operator and a spectral projection on the boundary.
 In case the metric is of product type near the boundary a Feynman parametrix is constructed.
\end{abstract}
\maketitle


\section*{Introduction}

The Atiyah-Singer index theorem is one of main mathematical achievements of the 20$^\mathrm{th}$ century due to 
its many applications and because of the conceptual insights it provides.
It computes the index of an elliptic operator on a compact manifold without boundary.
An analog for hyperbolic operators on Lorentzian manifolds is unknown and is not to be expected.
Compact Lorentzian manifolds without boundary are for many reasons unsuitable in this context.
For example, they violate all causality conditions, which not only makes them unsuitable as models in General Relativity but also 
causes analytic problems for the operators; e.g.\ there is no well-posed initial value problem.

The Atiyah-Patodi-Singer index theorem deals with elliptic operators on compact manifolds with boundary.
The boundary conditions which one has to impose are based on the spectral decomposition for the induced operator on the boundary.
The aim of the present article is to derive a Lorentzian analog for this theorem.

We consider Lorentzian manifolds $M$ with boundary where the boundary consists of two smooth spacelike Cauchy hypersurfaces.
Since the boundary is Riemannian, Atiyah-Patodi-Singer boundary conditions still make sense.
Moreover, we assume a spin structure on the manifold so that spinors and the Dirac operator are defined.
In even dimensions the spinor bundle splits into the subbundles of spinors of positive and negative chirality, respectively, $SM=S^+M\oplus S^-M$.
The Dirac operator maps sections of positive-chirality spinors to those of negative chirality.
The domain $\FE^0_\APS(M;S^+M)$ of the Dirac operator can be characterized as the completion of smooth sections of $S^+M$ with respect to the $L^2$-graph norm for the Dirac operator, subject to Atiyah-Patodi-Singer boundary conditions.
Theorems~\ref{thm:IndQ=IndDAPS} and \ref{thm:IndGeom} combine to give the

\begin{maintheorem}
Let $(M,g)$ be a compact time-oriented globally hyperbolic Lorentzian manifold with boundary $\partial M = \Sigma_- \sqcup \Sigma_+$. 
Here $\Sigma_\pm$ are smooth spacelike Cauchy hypersurfaces, with $\Sigma_+$ lying in the future of $\Sigma_-$.
Assume that $M$ is even dimensional and comes equipped with a spin structure.

Then the Dirac operator $D_\APS: \FE^0_\APS(M;S^+M) \to  L^2(M;S^-M)$ under Atiyah-Patodi-Singer boundary conditions is Fredholm and its index is given by
$$
\ind[D_\APS]
= 
\int_M \Adach (\nabla) + \int_{\partial M}\TAdach(g) - \frac{h(A_{-})+h(A_{+})+\eta(A_{-})-\eta(A_{+})}{2}\, .
$$ 
\end{maintheorem}

The right hand side in the index formula is precisely the same as in the original Riemannian Atiyah-Patodi-Singer index theorem.
Here $\Adach(\nabla)$ is the $\Adach$-form manufactured from the curvature of the Levi-Civita connection of the Lorentzian manifold, $\TAdach(g)$ is the corresponding trangression form which also depends on the second fundamental form of the boundary (and vanishes if the boundary is totally geodesic).
Moreover, $A_\pm$ denotes the Dirac operator on $\Sigma_\pm$, $h$ the dimension of the kernel, and $\eta$ the $\eta$-invariant.

In the same way as for the Atiyah-Patodi-Singer index theorem the existence of a spin structure is not essential, but the statement of the theorem as well as its proof
easily generalize to the case of twisted  spin$^c$-Dirac operators. In order not to obscure the presentation by additional twisting bundles
we will state and discuss the more general theorem \ref{thm:maingeneral}  in the concluding remarks.

The Dirac operator on a Lorentzian manifold is far from hypoelliptic; 
solutions of the Dirac equation can have very low regularity.
Theorem~\ref{thm:KerDAPSglatt} tells us that under Atiyah-Patodi-Singer boundary conditions this is no longer so.
Solutions are always smooth as if we had elliptic regularity at our disposal.
Moreover,
$$
\ind[D_\APS] = \dim \ker[D_\APS|_{C^\infty(M;S^+M)}] -  \dim \ker[D_\aAPS|_{C^\infty(M;S^+M)}]\, .
$$ 
Here $D_\aAPS$ stands for the Dirac operator subject to ``anti-Atiyah-Patodi-Singer'' boundary conditions, the conditions complementary to the APS conditions.
The occurrence of the aAPS boundary conditions and the fact that $D_\aAPS$ again maps sections of $S^+M$ to those of $S^-M$, and not in the reverse direction, are different from the corresponding formula in the Riemannian setting.
In the elliptic case, the Dirac operator with anti-APS boundary conditions will in general have infinite-dimensional kernel.
In the Lorentzian situation, anti-APS conditions work equally well as the APS conditions.

Conceptually, one can think of this phenomenon as follows:
In the Riemannian case the Dirac equation subject to APS-conditions behaves like a heat equation being solved forward in time (which is well posed) while under aAPS-conditions it behaves like a heat equation being solved backward in time (which is ill posed).
In the Lorentzian setting the Dirac equation is essentially a wave equation which can be solved forward in time as well as backward in time.
On a more technical level, if the metric of the manifold has product structure near the boundary one can attach semi-infinite cylinders to replace the manifold with boundary by a complete one.
In the Riemannian setting solutions to the Dirac equation under APS-conditions correspond to exponentially decaying solutions of certain ODEs while aAPS-conditions lead to exponentially growing solutions which are not $L^2$ and hence bad.
In the Lorentzian case both boundary conditions lead to oscillating solutions which can be treated on equal footing.

The Atiyah-Singer index theorem and the Atiyah-Patodi-Singer index theorem have often been used in Quantum Field Theory, particularly in the context of the chiral anomaly. 
Here the underlying space is assumed to be Riemannian and the formulae are then applied to Lorentzian spacetimes by analogy. 
This is partially motivated by perturbative computations of gravitational corrections to the chiral anomaly that yield the same terms as in the Atiyah-Singer index formula. 
Our index theorem allows for a rigorous and non-perturbative derivation of a geometric formula for the chiral anomaly including an $\eta$-correction term, see \cite{baerstroh2015chiral}.

In contrast to traditional boundary conditions for second order operators such as Dirichlet or Neumann conditions which occur naturally in many physical situations, the APS conditions for first-order elliptic operators were introduced rather late and only for mathematical reasons.
They are conditions which make Dirac operators Fredholm and are therefore suitable for index theory.
In the Lorentzian setting however, APS conditions have a natural physical interpretation;
they can be read as allowing only Dirac-wave functions for particles or for antiparticles in the beginning and the end of a spacetime region respectively.
This is crucial for the application of our index theorem in \cite{baerstroh2015chiral}.

The index of Dirac operators on curved Lorentzian spacetimes seems not to have been discussed much in the physics or mathematics literature. 
We believe that this is due to the lack of ellipticity of the operator and the fact that in the hyperbolic case regularity is implied by propagation of singularities of the boundary data rather than the existence of a local parametrix. 
In \cite{finster2014chiral} an index is associated to a certain bounded operator acting on the solution space of the Dirac operator on some globally hyperbolic spacetimes.
There seems to be no relation to APS-boundary conditions and to the results of the present paper.
In \cite{gell2014feynman} a Fredholm property is established and a Feynman propagator is constructed for second-order wave equations in certain situations.

The paper is organized as follows:
In Section~\ref{sec:setup} we describe the setup and collect some standard material on Lorentzian geometry and on Dirac operators, mostly to fix notation.
In Section~\ref{sec:Cauchy} we introduce spinors of finite energy and discuss well-posedness of the Cauchy problem in those function spaces.
This gives rise to the wave evolution operator $Q$ for the Dirac equation.
The Atiyah-Patodi-Singer boundary conditions imply a decomposition of this evolution operator into a $2\times 2$-matrix.
We show that the off-diagonal terms $Q_{+-}$ and $Q_{-+}$ are compact operators while the diagonal terms $Q_{++}$ and $Q_{--}$ are Fredholm.
This is based on a Fourier integral operator technique.
Some technical material needed here is collected in Appendix~\ref{app:NormHyp}.
In Section~\ref{sec:DiracAPS} we relate the Fredholm property of $Q_{--}$ to that of the Dirac operator itself and show that the indices coincide.
Here we also show smoothness of solutions of the Dirac equation under APS or anti-APS boundary conditions.
The geometric formula for the index is computed in Section~\ref{sec:Geom}.
The spectral flow of a family of Dirac operators on Cauchy hypersurfaces is used to relate the index of the Lorentzian Dirac operator to that of a Dirac operator for an auxiliary Riemannian metric. 
In Section~\ref{sec:example} we construct Lorentzian metrics on $M=I\times S^{4k-1}$ with nontrivial index.

In Section~\ref{sec:Feynman} we describe an alternative approach to index theory for Dirac operators on Lorentzian manifolds based on the construction of a Feynman parametrix. We show that the Dirac operator subject to APS boundary conditions is invertible up to smoothing operators, at least in the case when the metric of $M$ is of product type near the boundary. 
This shows that the theory in some ways still resembles the elliptic theory, but Fourier integral operator parametrices replace the pseudodifferential parametrices. 
This approach also shows that the operators $Q_{+-}$ and $Q_{-+}$ are smoothing (and not just compact) when the metric of $M$ is of product type near the boundary.
This is important for physical applications as it implies the implementability of time evolution in the Fock space constructed from the space of solutions of the Dirac equation, by the Shale-Stinespring criterion.

We conclude in Section~\ref{sec:remarks} with some remarks on possible extensions of the results of this paper.

\emph{Acknowledgments.}
The authors would like to thank the \emph{Sonderforschungsbereich 647} funded by \emph{Deutsche Forschungsgemeinschaft},
and  the \emph{London Mathematical Society} (Scheme 4 grant 41325), for financial support and the universities of Loughborough 
and Potsdam for their hospitality. Part of the work for this paper was carried out during the trimester program
``Non-commutative Geometry and its Applications'', funded by the \emph{Hausdorff Center of Mathematics} in Bonn, and we are grateful for the support and hospitality during this time.


\section{Setup and notation}
\label{sec:setup}

We start by describing the setup of this article and collect a few standard facts on Dirac operators on Lorentzian manifolds.
For a more detailed introduction to Lorentzian geometry see e.g.\ \cite{beem1996global,o1983semi}.
Suppose that $X$ is an $(n+1)$-dimensional oriented time-oriented Lorentzian spin manifold. 
We use the convention that the metric of $X$ has signature $(-+\cdots +)$.

A subset $\Sigma\subset X$ is called a \emph{Cauchy hypersurface} if every inextensible timelike curve in $X$ meets $\Sigma$ exactly once.
If $X$ possesses a Cauchy hypersurface then $X$ is called \emph{globally hyperbolic}.
All Cauchy hypersurfaces of $X$ are homeomorphic.
We assume that $X$ is \emph{spatially compact}, i.e.\ the Cauchy hypersurfaces of $X$ are compact.
Let $\Sigma_-,\Sigma_+\subset X$ be two disjoint smooth and spacelike Cauchy hypersurfaces.
W.l.o.g.\ let $\Sigma_-$ lie in the past of $\Sigma_+$.
By \cite[Thm.~1.2]{MR2254187} (see also \cite[Thm.~1]{muller2015note}) $X$ can be written as 
\begin{equation}
X = \R \times \Sigma
\label{eq:Split}
\end{equation}
such that each $\Sigma_t = \{ t \} \times \Sigma$ is a smooth spacelike Cauchy hypersurface, $\Sigma_-=\Sigma_{t_1}$ and $\Sigma_+=\Sigma_{t_2}$. 
Moreover, the metric of $X$ takes the form $\<\cdot,\cdot\> = -N^2\, dt^2 + g_t$ where $N:X\to\R$ is a smooth positive function (the lapse function) and $g_t$ is a smooth $1$-parameter family of Riemannian metrics on $\Sigma$.
We understand the subset $M = [t_1,t_2] \times \Sigma$ as a globally hyperbolic manifold with boundary $\partial M =\Sigma_+ \sqcup \Sigma_-$.
Without reference to the splitting~\eqref{eq:Split}, $M$ can be characterized as $M=J^+(\Sigma_-)\cap J^-(\Sigma_+)$ where $J^\pm$ denotes the causal future and past, respectively.
Throughout the paper we will assume that $n$ is odd, i.e.\ the dimension of $M$ is even.

\ausblenden{Bild}{
\begin{pspicture}(-6.5,-3)(6,2.5)
\psset{viewpoint=-30 10 15, Decran=30, lightsrc=-20 20 15}
\defFunction{Lorentz}(u,v)
 {1 u u mul 0.1 mul add v Cos mul}
 {u}
 {1 u u mul 0.07 mul add v Sin mul}
\defFunction{Splus}(v)
 {1 -3 -3 mul 0.1 mul add v Cos mul}
 {-3}
 {1 -3 -3 mul 0.07 mul add v Sin mul}
\defFunction{Sminus}(v)
 {1 3 3 mul 0.1 mul add v Cos mul}
 {3}
 {1 3 3 mul 0.07 mul add v Sin mul}

\psSolid[object=surfaceparametree,
        base=-4 -3 0 2 pi mul,
        fillcolor=RoyalBlue!70,
        incolor=white,
	 opacity=0.7,
        function=Lorentz,
        ngrid=20 180,
        grid=false]%
\psSolid[object=surfaceparametree,
        base=-3 3 0 2 pi mul,
        fillcolor=blue!70,
        incolor=black,
	 opacity=0.7,
        function=Lorentz,
        ngrid=120 180,
        grid=false]%
\psSolid[object=surfaceparametree,
        base=3 4 0 2 pi mul,
        fillcolor=RoyalBlue!70,
        incolor=yellow!50,
	 opacity=0.7,
        function=Lorentz,
        ngrid=20 180,
        grid=false]%
\psSolid[object=courbe,
        range=0.95 4.22,
        r=0,
        ngrid=360,
        linecolor=black,
        linewidth=0.02,
        function=Splus]%
\psSolid[object=courbe,
        range=1.1 4.1,
        r=0,
        ngrid=360,
        linecolor=black,
        linewidth=0.02,
        function=Sminus]%
       
\psPoint(-3,0,1){M}
\uput[u](M){\psframebox*[framearc=.3]{$M$}}
\psPoint(-3,2.5,1){S-}
\uput[u](S-){\psframebox*[framearc=.3]{$\Sigma_-$}}
\psPoint(-3,-3.2,1){S+}
\uput[u](S+){\psframebox*[framearc=.3]{$\Sigma_+$}}
\end{pspicture}
\begin{center}
\textbf{Fig.~1.}
\emph{The Lorentzian manifold $X$}
\end{center}
} 

\subsection{Spinors and the Dirac operator}

We recall some facts about spinors and Dirac operators on Lorentzian manifolds, see \cite{MR2121740,MR701244} for details.
Let $SM \to M$ be the complex spinor bundle on $M$ endowed with its invariantly defined indefinite inner product $(\cdot,\cdot)$.
Suppose that $\D : C^\infty(M;SM) \to C^\infty(M;SM)$ is the Dirac operator acting on sections of $SM$. 
Locally, if $e_0,e_1,\ldots,e_n$ is a Lorentz-orthonormal tangent frame, the Dirac operator is given by
$$
\D= \sum_{j=0}^n \ep_j\gamma(e_j)\nabla_{e_j}
$$
where $\gamma(X)$ denotes Clifford multiplication by $X$, $\nabla$ is the Levi-Civita connection on $SM$, and $\ep_j=\<e_j,e_j\>=\pm 1$.
Clifford multiplication satisfies
$$
\gamma(X)\gamma(Y) + \gamma(Y)\gamma(X) = - 2 (X,Y)
$$
and 
$$
(\gamma(X) u,v) = (u,\gamma(X) v)
$$
for all $X,Y\in T_pM$, $u,v \in S_pM$ and $p\in M$.

Clifford multiplication with the volume form $\Gamma=\rmi^{n(n+3)/2}\,\gamma(e_0)\cdots\gamma(e_n)$ satisfies $\Gamma^2=\id_{SM}$.
This induces the eigenspace decomposition $SM = S^+M \oplus S^-M$ for the eigenvalues $\pm1$ into spinors of positive and negative chirality.
Since the dimension of $M$ is even, $\Gamma\gamma(X)=-\gamma(X)\Gamma$ for all $X\in TM$.
In particular, $S^+M$ and $S^-M$ have equal rank and the Dirac operator anticommutes with $\Gamma$, i.e.\ it takes the form 
$$
 \D = \left( \begin{matrix} 0 & \tilde D \\ D & 0 \end{matrix} \right)
$$
with respect to the splitting $SM = S^+M \oplus S^-M$.
Here $D : C^\infty(M;S^+M) \to C^\infty(M;S^-M)$ and  $\tilde D : C^\infty(M;S^-M) \to C^\infty(M;S^+M)$ are first order differential operators.
The subbundles $S^\pm M$ are isotropic with respect to the inner product $(\cdot,\cdot)$ of $SM$.

Let $\nu$ be the past-directed timelike vector field on $M$ with $\<\nu,\nu\>\equiv -1$ which is perpendicular to all $\Sigma_t$.
Then the divergence theorem implies for all $u,v \in C^1(M;SM)$
\begin{equation}
\int_M \{( \D u,v) + (u,\D v)\} \dV 
= 
\int_{\Sigma_{t_2}} (\gamma(\nu)u,v) \dA - \int_{\Sigma_{t_1}} (\gamma(\nu)u,v) \dA
\label{eq:DiracGreen}
\end{equation}
where $\dV$ denotes the volume element on $M$ and $\dA$ the one on $\Sigma_t$.
We denote the formal adjoint of any linear differential operator
$$
L: C^\infty(M;SM) \to C^\infty(M;SM)
$$
with respect to $(\cdot,\cdot)$ by $L^\dagger$.
If $u$ and $v$ are supported in the interior of $M$, then the boundary contribution in \eqref{eq:DiracGreen} vanishes which implies $\D^\dagger = -\D$. 

As is usual in the physics literature, we write $\beta=\gamma(\nu)$.
Then $\beta^2=\id$ and $\langle \cdot, \cdot \rangle=(\beta \cdot, \cdot)$ defines a positive definite inner product on $SM$ provided $(\cdot,\cdot)$ was chosen with the appropriate sign.
Note that in contrast to $(\cdot,\cdot)$ the inner product $\<\cdot,\cdot\>$ on $SM$ depends on the choice of splitting \eqref{eq:Split} which determines $\nu$ and hence $\beta$.

\subsection{Restriction to hypersurfaces}

The restriction of $S^\pm M$ to any slice $\Sigma_t$ can be naturally identified with the spinor bundle of $\Sigma_t$, i.e.\ $S^\pm M|_{\Sigma_t} = S\Sigma_t$, see \cite[Sec.~3]{MR2121740}.
On this restriction $\<\cdot,\cdot\>$ is now the natural inner product.
Clifford multiplication $\gamma_t(X)$ on $S\Sigma_t$ corresponds to $\rmi \beta \gamma(X)$ under this identification.
Since for any $X\in T\Sigma_t$
\begin{align*}
\<\gamma_t(X)u, v\> 
&= (\beta\rmi\beta\gamma(X)u, v) 
= (\rmi\gamma(X)u, v) 
= -(u,\rmi\gamma(X) v) \\
&= -(u,\beta\gamma_t(X) v) 
= -(\beta u,\gamma_t(X) v) 
= -\<u,\gamma_t(X) v\>
\end{align*}
Clifford multiplication on $\Sigma_t$ is skew-adjoint.
Moreover, under this identification $\beta$ anticommutes with $\gamma_t(X)$ and with the Dirac operator $A_t$ on $\Sigma_t$.
The latter can be seen as follows:
The spinorial Levi-Civita connections of $M$ and $\Sigma_t$ are related by
$$
\nabla_Xu = \nabla_X^{\Sigma_t}u - \frac12\beta\gamma(\nabla_X\nu)u
$$
for all $X$ tangent to $\Sigma_t$, see \cite[Eq.~(3.5)]{MR2121740}.
An easy computation now shows $\nabla_X^{\Sigma_t}(\beta u)=\beta\nabla_X^{\Sigma_t}u$.
Therefore, using a local orthonormal tangent frame $e_1,\ldots,e_n$ on $\Sigma_t$, we find 
$$
A_t(\beta u) 
= 
\sum_j\gamma_t(e_j)\nabla_{e_j}^{\Sigma_t}(\beta u)
=
\sum_j\rmi\beta\gamma(e_j)\beta\nabla_{e_j}^{\Sigma_t}u
=
-\sum_j\rmi\beta^2\gamma(e_j)\nabla_{e_j}^{\Sigma_t}u
=
-\beta A_tu.
$$
Along $\Sigma_t$ we have for $u \in C^\infty(M;S^\pm M)$
\begin{equation}
\D u = -\beta \left(\nabla_{\nu} + \rmi\, A_t - \frac{n}{2}H \right) u
\label{eq:DiracSurface+}
\end{equation}
where $H$ is the mean curvature of $\Sigma_t$ with respect to $\nu$, see \cite[Eq.~(3.6)]{MR2121740}.

If $M$ is a metric product near $\Sigma_{t_0}$, i.e.\ the metric takes the form $-dt^2 + g$ where $g$ is a Riemannian metric on $\Sigma$ independent of $t$, then $\nu = -\partial/\partial t$ (because $\nu$ is past-directed) and $H\equiv0$.
Denoting the Dirac operator on $(\Sigma,g)$ by $A$ and identifying spinors along the $t$-lines by parallel transport, then \eqref{eq:DiracSurface+} reduces near $\Sigma_{t_0}$ to
$$
 D =  \beta \left( \frac{\partial}{\partial t} - \mathrm{i}\, A \right) .
$$
Moreover, in the product case $\beta$ commutes with $\partial/\partial t$ and it always anticommutes with $A$, hence
$$
 \tilde D =   \left( \frac{\partial}{\partial t} + \mathrm{i}\, A \right) \beta.
$$

The second order differential operators $\tilde D D$ and $D \tilde D$ are normally hyperbolic.
In the product case they have the form $\frac{\partial^2}{\partial t^2} + A^2$.

\subsection{The Atiyah-Patodi-Singer spaces}

For any subset $I\subset\R$ denote by $\chi_I:\R\to\R$ the characteristic function of $I$. If $I$ is measurable we
denote the corresponding spectral projections of $A_t$ by $P_I(t):=\chi_I(A_t)$ and the corresponding subspaces of $L^2(\Sigma_t;SM|_{\Sigma_t})$ by $L^2_I(\Sigma_t;SM|_{\Sigma_t}):=P_I(t)(L^2(\Sigma_t;SM|_{\Sigma_t}))$.
Note that if $I$ is an interval then $P_I(t)$ is a zero-order pseudodifferential operator and it therefore acts
on the Sobolev spaces $H^s(\Sigma_t;SM|_{\Sigma_t})$ for any $s \in \R$.
For $s>1/2$, restriction to hypersurfaces is defined and continuous and we can therefore introduce the Atiyah-Patodi-Singer spaces as
\begin{gather*}
H^s_{\APS}( M;S^+M):=\{ u\in H^s(M;S^+M) \mid P_{[0,\infty)}(t_1)( u|_{\Sigma_{t_1}}) = 0 = P_{(-\infty,0]}(t_2)( u|_{\Sigma_{t_2}})\} \, ,\\
H^s_{\APS}(M;S^-M):=\{ u\in H^s(M;S^-M) \mid P_{(-\infty,0]}(t_1)( u|_{\Sigma_{t_1}}) = 0 = P_{[0,\infty)}(t_2)( u|_{\Sigma_{t_2}})\} \, .
\end{gather*}
We also denote $H^s_{\APS}( M;SM) := H^s_{\APS}( M;S^+M) \oplus H^s_{\APS}( M;S^-M) \subset H^s(M;SM)$.
Since $\beta$ anticommutes with $A_{t_j}$ it leaves $H^s_{\APS}(M;SM)$ invariant and continuously maps
$H^s_{\APS}( M;S^\pm M)$ to $H^s_{\APS}( M;S^\mp M)$.
Similarly, we define ``anti-Atiyah-Patodi-Singer'' spaces in such a way that their boundary values are orthogonal to the boundary
values of functions in $H^s_{\APS}$. This means
\begin{gather*}
H^s_{\aAPS}(M;S^+M):=\{ u\in H^s(M;S^+M) \mid P_{(-\infty,0)}(t_1)( u|_{\Sigma_{t_1}}) = 0 = P_{(0,\infty)}(t_2)( u|_{\Sigma_{t_2}})\} \,,\\
H^s_{\aAPS}( M;S^-M):=\{ u\in H^s(M;S^-M) \mid P_{(0,\infty)}(t_1)( u|_{\Sigma_{t_1}}) = 0 = P_{(-\infty,0)}(t_2)( u|_{\Sigma_{t_2}})\} \,,
\end{gather*}
and $H^s_{\aAPS}( M;SM) := H^s_{\aAPS}( M;S^+M) \oplus H^s_{\aAPS}( M;S^-M) \subset H^s(M;SM)$.


\section{The Cauchy problem}
\label{sec:Cauchy}

We recall that $M$ is foliated by the smooth spacelike Cauchy hypersurfaces $\Sigma_t$ where $t\in[t_1,t_2]$.
For any $s\in\R$ the family $\{H^s(\Sigma_t;S^+M|_{\Sigma_t})\}_{t\in[t_1,t_2]}$ is a bundle of Hilbert spaces over the interval $[t_1,t_2]$.
As a bundle of topological vector spaces, it can be globally trivialized by parallel transport along the $t$-lines, for example.
Continuous sections of this bundle will be called \emph{spinors of finite $s$-energy} and the space of all such sections will be denoted by $\FE^s(M;S^+M)$.
Any spinor $u$ of finite $s$-energy can naturally be considered as a distributional spinor on $M$ via
$$
u[\vp] = \int_{t_1}^{t_2} u(t)[(N\vp)|_{\Sigma_t}]\, \dt
$$ 
for all test sections $\vp\in C^\infty(M;(S^+)^*M)$.
Here $ u(t)[(N\vp)|_{\Sigma_t}]$ refers to the distributional application of $u(t)\in H^s(\Sigma_t;S^+M|_{\Sigma_t})$ to the test spinor $(N\vp)|_{\Sigma_t}\in C^\infty(\Sigma_t;(S^+)^*M|_{\Sigma_t})$.
Note that the volume element of $M$ is $N\,\dt\,\dA$ which explains the appearance of the lapse function $N$ as a factor in the above formula.
The space $\FE^s(M;S^+M)$ is topologized by the norm
$$
\|u\|_{\FE^s} = \max_{[t_1,t_2]} \|u(t)\|_{H^s}.
$$
For $s>\frac{n}{2}$ the Sobolev embedding theorem implies $\FE^s(M;S^+M)\subset C^0(M;S^+M)$.

In order to treat the inhomogeneous Cauchy problem for the Dirac operator we denote the space of $L^2$-sections of the bundle $\{H^s(\Sigma_t;S^+M|_{\Sigma_t})\}_{t\in[t_1,t_2]}$ by $L^2([t_1,t_2];H^s(\Sigma_\bullet))$ and equip it with the corresponding $L^2$-norm
$$
\|u\|^2_{L^2,H^s} := \int_{t_1}^{t_2} \|(Nu)|_{\Sigma_t}\|^2_{H^s}\, \dt \, .
$$
Now we define
$$
\FE^s(M;D) := \{u\in\FE^s(M;S^+M)\mid Du\in L^2([t_1,t_2];H^s(\Sigma_\bullet))\}
$$
with the norm
$$
\|u\|^2_{\FE^s,D} := \|u\|^2_{\FE^s} + \|Du\|^2_{L^2,H^s} \,\,.
$$
Here $D$ is applied to $u$ in the distributional sense.
The following statement is known as well-posedness of the inhomogeneous Cauchy problem for the Dirac equation:

\begin{theorem}\label{thm:InhomoCauchyProblem}
For any $s\in\R$ and any $t\in[t_1,t_2]$ the mapping
\begin{align*}
\res_t \oplus D : \FE^s(M;D) &\to H^s(\Sigma_t;S^+M|_{\Sigma_t})\oplus L^2([t_1,t_2];H^s(\Sigma_\bullet)),\\
u &\mapsto (u|_{\Sigma_t},Du),
\end{align*}
is an isomorphism of Banach spaces.
\end{theorem}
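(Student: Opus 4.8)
The plan is to convert $Du=f$ into a linear first-order evolution equation along the foliation $\{\Sigma_t\}_{t\in[t_1,t_2]}$, solve it by the energy method, and read off the isomorphism statement from the resulting a priori estimates; fix the slice time of the statement and call it $t_0$. By~\eqref{eq:DiracSurface+}, along each slice $Du = -\beta\bigl(\nabla_\nu + \rmi A_t - \tfrac{n}{2}H\bigr)u$, and since $\nu = -N^{-1}\partial_t$ in the splitting~\eqref{eq:Split} and $\beta^2=\id$, the equation $Du=f$ is equivalent to
$$
\nabla_{\partial_t} u(t) = \rmi N(t)\, A_t\, u(t) - \tfrac{n}{2}N(t)H(t)\, u(t) + N(t)\beta f(t).
$$
Trivializing the Hilbert-space bundle $\{H^s(\Sigma_t;S^+M|_{\Sigma_t})\}_t$ by parallel transport along the $t$-lines turns this into an evolution equation
$$
\dot u(t) = \rmi N(t)\, \widetilde A_t\, u(t) + B(t)\, u(t) + \widetilde g(t)
$$
on the fixed Hilbert scale $H^s(\Sigma)$, where $\widetilde A_t$ is a smooth family of first-order elliptic operators on $\Sigma$, formally self-adjoint for a fixed $L^2$-structure; $B(t)$ is a smooth family of bundle endomorphisms collecting the mean-curvature term, the connection coefficient of the trivialization and the correction due to the $t$-dependence of the induced $L^2$-structures; and $\widetilde g$ is $N\beta f$ transported, so that $\widetilde g\in L^2([t_1,t_2];H^s(\Sigma))$ if and only if $f\in L^2([t_1,t_2];H^s(\Sigma_\bullet))$. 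Note that in this finite-energy picture no trace theorem is needed for any $s\in\R$: $\res_{t_0}u$ is simply the value $u(t_0)$ of the continuous section $u$, which belongs to $H^s(\Sigma_{t_0})$ by the definition of $\FE^s$.

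The core is an energy estimate valid for all $s\in\R$. Pick an invertible classical elliptic pseudodifferential operator $\Lambda$ of order $s$ on $\Sigma$. For a solution $u$, differentiate $\|\Lambda u(t)\|_{L^2}^2$ and commute $\Lambda$ past $\rmi N\widetilde A_t$: the terms $2\,\Re\langle\rmi N\widetilde A_t\,\Lambda u,\Lambda u\rangle$, $2\,\Re\langle[\Lambda,\rmi N\widetilde A_t]u,\Lambda u\rangle$ and $2\,\Re\langle\Lambda Bu,\Lambda u\rangle$ are each $\le C\|\Lambda u\|_{L^2}^2$, because $\rmi N\widetilde A_t$ differs from a skew-adjoint operator by the bounded operator $\tfrac{1}{2}\rmi[N,\widetilde A_t]$ and because $[\Lambda,\rmi N\widetilde A_t]\Lambda^{-1}$ and $\Lambda B\Lambda^{-1}$ are of order $0$, hence bounded on $L^2(\Sigma)$; the inhomogeneous term is controlled by $\|\Lambda\widetilde g\|_{L^2}^2 + \|\Lambda u\|_{L^2}^2$. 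For $s=0$ this is the $L^2$-energy identity underlying Green's formula~\eqref{eq:DiracGreen}. Gronwall's inequality now yields, for all $t_0,t\in[t_1,t_2]$,
$$
\|u(t)\|_{H^s(\Sigma)} \le C_s\Bigl(\|u(t_0)\|_{H^s(\Sigma)} + \|\widetilde g\|_{L^2([t_1,t_2];H^s(\Sigma))}\Bigr),
$$
which gives uniqueness of solutions and, once existence is known, boundedness of the inverse map.

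For existence one first treats smooth data. Since $\Sigma$ is compact, well-posedness of the Cauchy problem for the Dirac operator $D$ on the globally hyperbolic manifold $M$ (which by the above is symmetric hyperbolic in the normal direction) produces, for smooth $u_0$ and smooth $f$, a unique smooth $u$ with $u|_{\Sigma_{t_0}}=u_0$ and $Du=f$. Writing $Q(t,t_0)$ for the homogeneous solution operator ($f=0$), the energy estimate shows that $Q(t,t_0)$ extends to a bounded operator on $H^s(\Sigma)$ for every $s\in\R$, strongly continuous in $t$, with $Q(t,t')Q(t',t'')=Q(t,t'')$ and $Q(t,t)=\Id$. By Duhamel's principle the solution with data $(u_0,f)$ is
$$
u(t) = Q(t,t_0)u_0 + \int_{t_0}^{t} Q(t,\tau)\,\widetilde g(\tau)\, d\tau ,
$$
which lies in $C^0([t_1,t_2];H^s(\Sigma))\cong\FE^s(M;S^+M)$ whenever $u_0\in H^s(\Sigma_{t_0})$ and $f\in L^2([t_1,t_2];H^s(\Sigma_\bullet))$, satisfies $Du=f$ distributionally on $M$ and $u|_{\Sigma_{t_0}}=u_0$, and therefore belongs to $\FE^s(M;D)$. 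This is a two-sided inverse of $\res_{t_0}\oplus D$. Since boundedness of $\res_{t_0}\oplus D$ itself is immediate from the definitions, $\|u|_{\Sigma_{t_0}}\|_{H^s}^2+\|Du\|_{L^2,H^s}^2 \le \|u\|_{\FE^s}^2+\|Du\|_{L^2,H^s}^2 = \|u\|_{\FE^s,D}^2$, and boundedness of the inverse is the energy estimate, the map $\res_{t_0}\oplus D$ is an isomorphism of Banach spaces.

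The main obstacle is the uniform handling of all real $s$, in particular the negative and non-integer ones. Classical hyperbolic existence theory is most naturally phrased for smooth (or sufficiently regular) data, so one constructs the evolution operator $Q(t,t_0)$ there and then propagates it to arbitrary Sobolev order by density together with the $s$-dependent energy estimate, whose proof for general $s$ rests on the pseudodifferential calculus for the family $\widetilde A_t$ (to produce $\Lambda$, $\Lambda^{-1}$ and the order-$0$ conjugates above). The only other delicate point is the faithful bookkeeping of all the zeroth-order terms generated by~\eqref{eq:DiracSurface+}, by the parallel-transport trivialization, and by the $t$-dependence of the induced geometry, together with their smooth dependence on $t$; this is routine but indispensable.
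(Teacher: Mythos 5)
Your proof is correct, and its first step --- rewriting $Du=f$ via \eqref{eq:DiracSurface+} as the evolution equation $\nabla_{\partial_t}u = \rmi N A_t u - \tfrac{n}{2}NHu + N\beta f$ on the Hilbert bundle trivialized along the $t$-lines --- is exactly the reduction the paper makes. The difference lies in what happens next: the paper simply quotes a known well-posedness theorem for first-order (symmetric-hyperbolic) evolution equations of this type (Theorem~2.3 in Ch.~IV of the cited reference, applied with $K=\rmi A_t+\tfrac{n}{2}H$) to obtain bijectivity for every $s\in\R$, and then concludes with the open mapping theorem; you instead reprove that black box from scratch --- conjugation by an invertible elliptic $\Lambda$ of order $s$, skew-adjointness modulo bounded operators, commutator bounds, Gronwall, existence for smooth data plus Duhamel and density --- and as a by-product you get boundedness of the inverse directly from the energy estimate, so the open mapping theorem is not needed. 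What your route buys is self-containedness and explicit a priori constants; what it costs is that the technicalities you gesture at must genuinely be carried out: the failure of the transported family $\widetilde A_t$ to be self-adjoint for a fixed $L^2$-structure is a zeroth-order defect entering the symmetric part of the generator (it is not literally an additive term $B(t)$ in the equation, though it is harmless for the estimate), and differentiating $\|\Lambda u(t)\|_{L^2}^2$ for a solution that is a priori only continuous in $t$ with values in $H^s$ requires a Friedrichs-mollifier (or duality) argument to make the uniqueness claim rigorous at low regularity. These points are standard and do not affect correctness; they are precisely the content of the theorem the paper cites.
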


\begin{proof}
Recall that $D=\frac{\beta}{N} \left(\frac{\partial}{\partial t} - \rmi\,N\, A_t + \frac{n}{2}NH \right)$.
Applying Theorem~3.2 in \cite[Ch.~IV]{MR618463} with $K=\rmi\,N\,  A_t - \frac{n}{2}NH$ gives that the map is bijective.
It is clearly continuous.
By the open mapping theorem it is an isomorphism.
\end{proof}

\begin{rems}
\begin{enumerate}[label=(\alph*),leftmargin=0pt,labelindent=15pt,labelsep=2pt,labelwidth=5pt,itemindent=!]
\item
Since $H^s(\Sigma_t;S^+M|_{\Sigma_t})\oplus L^2([t_1,t_2];H^s(\Sigma_\bullet))$ is a Hilbert space we conclude that $\FE^s(M;D)$ also carries a Hilbert space topology.
Note that $\FE^s(M;S^+M)$ is only a Banach space.
\item
Smooth sections are dense in $H^s(\Sigma_t;S^+M|_{\Sigma_t})$ and in $L^2([t_1,t_2];H^s(\Sigma_\bullet))$.
Since the solution of the initial value problem $Du=f$, $u|_{\Sigma_t}=u_0$, is smooth if $f$ and $u_0$ are smooth (cf.\ the proof of Theorem~\ref{thm:KerDAPSglatt}), we get that smooth sections are also dense in $\FE^s(M;D)$.
\item
For general $s$ the spaces $\FE^s(M;D)$ may depend on the choice of the foliation of $M$ into Cauchy hypersurfaces.
For $s=0$ however, the right hand side in Theorem~\ref{thm:InhomoCauchyProblem} is 
$$
L^2(\Sigma_t;S^+M|_{\Sigma_t}) \oplus L^2([t_1,t_2];L^2(\Sigma_\bullet))
=
L^2(\Sigma_t;S^+M|_{\Sigma_t}) \oplus L^2(M;S^-M)
$$
and hence independent of this choice.
Thus $\FE^0(M;D)$ is the completion of $C^\infty(M;S^+M)$ with respect to a norm which does not depend on the choice of foliation.
Hence $\FE^0(M;D)$ is intrinsically defined by $M$.
\item
The space $\FE^0(M;D)$ is the completion of $C^\infty(M;S^+M)$ with respect to the norm 
$$
\|u\|^2_{\FE^0,D} = \max_{\tau\in[t_1,t_2]}\|u|_{\Sigma_\tau}\|^2_{L^2} + \|Du\|^2_{L^2} \, .
$$
By the previous remark the norm
$$
\|u|_{\Sigma_t}\|^2_{L^2} + \|Du\|^2_{L^2}
$$
for fixed $t\in[t_1,t_2]$ is an equivalent norm on $\FE^0(M;D)$.
This is not surprising because the ``energy estimate'' (2.5) in \cite[Ch.~IV]{MR618463} ensures existence of a constant $C$ such that 
$$
\|u|_{\Sigma_t}\|^2_{L^2}
\le
C\cdot(\|u|_{\Sigma_\tau}\|^2_{L^2} + \|Du\|^2_{L^2})
$$
for any $t,\tau\in[t_1,t_2]$.
Integration with respect to $\tau$ yields
$$
(t_2-t_1)\|u|_{\Sigma_t}\|^2_{L^2}
\le
C\cdot(\|u\|^2_{L^2} + (t_2-t_1)\|Du\|^2_{L^2}) \, .
$$
On the other hand, we clearly have $\|u\|^2_{L^2}\le C'\max_{\tau\in[t_1,t_2]}\|u|_{\Sigma_\tau}\|^2_{L^2}$.
Therefore the $L^2$-graph norm for $D$ given by $\|u\|^2_{L^2} + \|Du\|^2_{L^2}$ is also an equivalent norm on $\FE^0(M;D)$.
\end{enumerate}
\end{rems}

Both $\FE^s(M;S^+M)$ and $\FE^s(M;D)$ induce the same relative topology on 
$$
\FE^s(M;\ker(D)) := \{u\in\FE^s(M;S^+M)\mid Du=0\} \, .
$$

We get well-posedness of the homogeneous Cauchy problem for the Dirac equation:

\begin{cor}\label{cor:HomoCauchyProblem}
For any $t\in[t_1,t_2]$ the restriction mapping
$$
\res_t : \FE^s(M;\ker(D)) \to H^s(\Sigma_t;S^+M|_{\Sigma_t})
$$
is an isomorphism of topological vector spaces.\qed
\end{cor}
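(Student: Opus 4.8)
The corollary is an immediate consequence of Theorem~\ref{thm:InhomoCauchyProblem} once one restricts to the kernel of $D$. The plan is to apply the isomorphism $\res_t\oplus D:\FE^s(M;D)\to H^s(\Sigma_t;S^+M|_{\Sigma_t})\oplus L^2([t_1,t_2];H^s(\Sigma_\bullet))$ and observe that $\FE^s(M;\ker(D))$ is exactly the preimage of the closed subspace $H^s(\Sigma_t;S^+M|_{\Sigma_t})\oplus\{0\}$ under this map. Since an isomorphism of Banach spaces restricts to an isomorphism between a closed subspace and its image, $\res_t\oplus D$ restricts to an isomorphism $\FE^s(M;\ker(D))\to H^s(\Sigma_t;S^+M|_{\Sigma_t})\oplus\{0\}$. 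Composing with the obvious identification of $H^s(\Sigma_t;S^+M|_{\Sigma_t})\oplus\{0\}$ with $H^s(\Sigma_t;S^+M|_{\Sigma_t})$, and noting that on $\ker(D)$ the map $\res_t\oplus D$ reduces to $\res_t$ alone, we obtain that $\res_t$ is an isomorphism of Banach spaces, hence in particular of topological vector spaces.

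The only point requiring a word of care is the identification of the topology on $\FE^s(M;\ker(D))$. Here one uses the remark made just before the statement: the subspace topology induced from $\FE^s(M;S^+M)$ and the subspace topology induced from $\FE^s(M;D)$ coincide on $\FE^s(M;\ker(D))$. This is clear because on $\FE^s(M;\ker(D))$ the term $\|Du\|^2_{L^2,H^s}$ in the $\FE^s(M;D)$-norm vanishes identically, so the two norms agree verbatim on this subspace. Thus it is legitimate to regard $\FE^s(M;\ker(D))$ as a closed subspace of the Hilbert space $\FE^s(M;D)$ and to apply the preceding paragraph there.

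I do not expect any genuine obstacle: the result is purely a reformulation of Theorem~\ref{thm:InhomoCauchyProblem} obtained by setting the inhomogeneity $Du$ to zero, together with the elementary fact that a Banach space isomorphism induces isomorphisms on matching closed subspaces. The one thing one should not overlook is that continuity of the inverse is automatic here — it comes for free from Theorem~\ref{thm:InhomoCauchyProblem} (which already incorporated the open mapping theorem), so no separate energy estimate needs to be invoked at this stage, although the energy estimate from \cite[Ch.~IV]{MR618463} is of course what ultimately makes $\res_t$ injective with continuous inverse.
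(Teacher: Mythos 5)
Your argument is correct and is exactly the reasoning the paper intends: the corollary is stated with no separate proof precisely because it follows from Theorem~\ref{thm:InhomoCauchyProblem} by restricting the isomorphism $\res_t\oplus D$ to the closed subspace $\FE^s(M;\ker(D))=(\res_t\oplus D)^{-1}\bigl(H^s(\Sigma_t;S^+M|_{\Sigma_t})\oplus\{0\}\bigr)$, on which the map reduces to $\res_t$. Your remark that the two candidate topologies on $\FE^s(M;\ker(D))$ coincide (the $\|Du\|$-term vanishes there) is the same observation the paper makes immediately before the corollary, so nothing is missing.
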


For $t,t'\in [t_1,t_2]$ we define the \emph{wave evolution operator} 
$$
Q(t',t):H^s(\Sigma_t;S^+M|_{\Sigma_t}) \to H^s(\Sigma_{t'};S^+M|_{\Sigma_{t'}})
$$ 
by the commutative diagram
$$
\xymatrix{
&  \FE^s(M;\ker(D)) \ar[dr]^{\res_{t'}}_\cong \ar[dl]_{\res_{t}}^\cong &  \\
H^s(\Sigma_t;S^+M|_{\Sigma_t}) \ar[rr]^{Q(t',t)} & & H^s(\Sigma_{t'};S^+M|_{\Sigma_{t'}})
}
$$

\subsection{Properties of the evolution operator}

It is clear from the definition that for each $s\in\R$ the operator $Q(t',t)$ is an isomorphism of topological vector spaces and that $Q(t'',t')\circ Q(t',t) = Q(t'',t)$ holds for all $t,t',t''\in [t_1,t_2]$.
In particular, $Q(t,t)=\id_{H^s(\Sigma_t)}$ and $Q(t,t')=Q(t',t)^{-1}$.

\begin{lem}\label{lem:unitaer}
For $s=0$ and any $t,t'\in [t_1,t_2]$ the wave evolution operator is unitary, i.e.\ an isometry
$$
Q(t',t):L^2(\Sigma_t;S^+M|_{\Sigma_t}) \to L^2(\Sigma_{t'};S^+M|_{\Sigma_{t'}}) \,\,.
$$
\end{lem}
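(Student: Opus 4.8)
The plan is to prove unitarity by differentiating the $L^2$-norm of a solution along the foliation and showing it is constant. Fix $u_0 \in C^\infty(\Sigma_t;S^+M|_{\Sigma_t})$ and let $u \in \FE^0(M;\ker D)$ be the unique solution of $Du = 0$ with $u|_{\Sigma_t} = u_0$; by Remark (b) above (smoothness of solutions with smooth data) $u$ is smooth. Consider the function $\tau \mapsto \|u|_{\Sigma_\tau}\|^2_{L^2}$, where the $L^2$-inner product on $\Sigma_\tau$ is the one induced by $\langle\cdot,\cdot\rangle = (\beta\,\cdot,\cdot)$. The first step is to compute its derivative. Using the divergence identity \eqref{eq:DiracGreen} applied on the sub-slab $M_{[\tau_0,\tau_1]} = [\tau_0,\tau_1]\times\Sigma$ with $v = u$, and noting $(\beta u, u) = \langle u,u\rangle \ge 0$ is exactly the integrand of $\|u|_\Sigma\|^2_{L^2}$, one gets
$$
\|u|_{\Sigma_{\tau_1}}\|^2_{L^2} - \|u|_{\Sigma_{\tau_0}}\|^2_{L^2}
= \int_{M_{[\tau_0,\tau_1]}} \{(\D u,u) + (u,\D u)\}\,\dV .
$$
Since $u$ has positive chirality, $\D u = \widetilde D D u + (\text{contribution of }D)$; more precisely $\D u = \begin{psmallmatrix}0&\tilde D\\ D&0\end{psmallmatrix}\begin{psmallmatrix}u\\0\end{psmallmatrix} = \begin{psmallmatrix}0\\ Du\end{psmallmatrix}$, so $\D u = Du$ as a section of $S^-M$, and $(\D u, u) = (Du, u) = 0$ because $S^+M$ and $S^-M$ are isotropic for $(\cdot,\cdot)$. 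Hence the right-hand side vanishes for all $\tau_0,\tau_1$, so $\|u|_{\Sigma_\tau}\|^2_{L^2}$ is independent of $\tau$; in particular $\|Q(t',t)u_0\|_{L^2} = \|u_0\|_{L^2}$.

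The second step is to upgrade from smooth data to all of $L^2$: smooth sections are dense in $L^2(\Sigma_t;S^+M|_{\Sigma_t})$ and $Q(t',t)$ is bounded (indeed an isomorphism) by Corollary~\ref{cor:HomoCauchyProblem} with $s=0$, so the isometry identity extends by continuity. Finally, $Q(t',t)$ is surjective since it is invertible with inverse $Q(t,t')$, so it is a unitary isomorphism.

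I expect the main subtlety to be bookkeeping with the two inner products: \eqref{eq:DiracGreen} is stated for the indefinite product $(\cdot,\cdot)$, whereas unitarity refers to the positive-definite $\langle\cdot,\cdot\rangle = (\beta\,\cdot,\cdot)$. The reconciliation is precisely that $(\gamma(\nu)u,v) = (\beta u, v) = \langle u,v\rangle$ on each slice, so the boundary terms in \eqref{eq:DiracGreen} are literally the $L^2$-norms appearing in the statement — this is the point at which the past-directedness of $\nu$ and the sign convention making $\langle\cdot,\cdot\rangle$ positive definite both get used, and care with orientation of the two boundary pieces $\Sigma_{\tau_1}$ (future) and $\Sigma_{\tau_0}$ (past) matters for the signs. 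The vanishing of the bulk term is then immediate from isotropy of $S^\pm M$. An alternative, essentially equivalent route avoiding any appeal to smoothness: write $D = \beta(\partial_t - \rmi A_t - \tfrac n2 H)$ from \eqref{eq:DiracSurface+}, so $Du=0$ reads $\partial_t u = (\rmi A_t + \tfrac n2 H)u$, and differentiate $\|u(\tau)\|^2_{L^2}$ directly; the $\rmi A_\tau$ term contributes nothing because $A_\tau$ is self-adjoint and the mean-curvature term $\tfrac n2 H$ is exactly compensated by the $\tau$-dependence of the Riemannian volume element $\dA$ on $\Sigma_\tau$ (whose logarithmic derivative is $-nH$ in our sign convention, since $\nu$ is past-directed). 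Either way the computation closes cleanly; I would present the divergence-theorem version as the cleaner one.
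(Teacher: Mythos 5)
Your proposal is correct and follows essentially the same route as the paper: check the isometry on a dense set of regular data, apply the Green's formula \eqref{eq:DiracGreen} on the slab between the two slices, observe that the bulk term vanishes since $Du=0$ and that the boundary terms $(\beta u,u)$ are exactly the slice $L^2$-norms, then extend by continuity using boundedness of $Q(t',t)$ on $L^2$. The only cosmetic difference is that the paper takes data in $H^s$ with $s>\tfrac n2+2$ and uses Sobolev embedding to get the $C^1$ regularity needed for \eqref{eq:DiracGreen}, rather than invoking smoothness of solutions with smooth data.
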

\begin{proof}
It suffices to check $\|Q(t',t) u\|_{L^2(\Sigma_{t'})} = \| u\|_{L^2(\Sigma_{t})}$ for $ u$ in a dense subset of $L^2(\Sigma_{t};S^+M|_{\Sigma_{t}})$.
Fix $s>\frac{n}{2} + 2$ and let $ u\in H^s(\Sigma_{t};S^+M|_{\Sigma_{t}})$.
By Corollary~\ref{cor:HomoCauchyProblem} there is a unique $\Phi\in\FE^s(M;\ker(D))$ which restricts to $ u$.
Since $D\Phi=0$ equation~\eqref{eq:DiracSurface+} shows $\nabla_t\Phi = -N\nabla_\nu\Phi = N(\rmi A_t-\frac{n}{2}H)\Phi$ and hence $\nabla_t\Phi\in\FE^{s-1}(M;S^+M)$.
Therefore $\Phi$ is a $C^1$-section of $\{H^{s-1}(\Sigma_t;S^+M|_{\Sigma_t})\}_{t\in[t_1,t_2]}$.
By the Sobolev embedding theorem $H^{s-1}(\Sigma_t;S^+M|_{\Sigma_t}) \subset C^1(\Sigma_t;S^+M|_{\Sigma_t})$.
Hence $\Phi\in C^1(M;S^+M)$.

Thus \eqref{eq:DiracGreen} applies to $\Phi$ on $[t,t']\times\Sigma\subset M$ (w.l.o.g.\ assume $t'>t$) and gives
\begin{align*}
0
&=
\int_M \{(D\Phi,\Phi) + (\Phi,D\Phi)\} \dV \\
&= 
\int_{\Sigma_{t'}} (\beta Q(t',t) u,Q(t',t) u) \dA - \int_{\Sigma_{t}} (\beta u, u) \dA \\
&=
\|Q(t',t) u\|^2_{L^2(\Sigma_{t'})} - \| u\|^2_{L^2(\Sigma_{t})} \, .\qedhere
\end{align*}
\end{proof}

With respect to the $L^2$-orthogonal splittings 
\begin{align*}
L^2(\Sigma_{t_1};S^+M|_{\Sigma_{t_1}}) 
&= 
L^2_{[0,\infty)}(\Sigma_{t_1};S^+M|_{\Sigma_{t_1}}) \oplus L^2_{(-\infty,0)}(\Sigma_{t_1};S^+M|_{\Sigma_{t_1}}) \, , \\
L^2(\Sigma_{t_2};S^+M|_{\Sigma_{t_2}}) 
&= 
L^2_{(0,\infty)}(\Sigma_{t_2};S^+M|_{\Sigma_{t_2}}) \oplus L^2_{(-\infty,0]}(\Sigma_{t_2};S^+M|_{\Sigma_{t_2}}) \, , 
\end{align*}
we write $Q(t_2,t_1)$ as a $2\times 2$-matrix,
\begin{equation}
Q(t_2,t_1) =
\begin{pmatrix}
\Qpp & \Qpm \\
\Qmp & \Qmm
\end{pmatrix} \, .
\label{eq:QMatrix}
\end{equation}
Hence $\Qmm = P_{(-\infty,0]}(t_2) \circ Q(t_2,t_1)|_{L^2_{(-\infty,0)}(\Sigma_{t_1};S^+M|_{\Sigma_{t_1}})}$ and similarly for the other three entries in the matrix.

\begin{lem}\label{lem:kerQ}
The operator $\Qpm$ restricts to an isomorphism 
$$
\ker[\Qmm]\to \ker[\Qpp^*]
$$ 
and $\Qmp$ restricts to an isomorphism 
$$
\ker[\Qpp]\to \ker[\Qmm^*].
$$
\end{lem}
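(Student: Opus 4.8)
The plan is to exploit the unitarity of $Q(t_2,t_1)$ established in Lemma~\ref{lem:unitaer}, together with the block structure \eqref{eq:QMatrix}. Write $Q=Q(t_2,t_1)$ and abbreviate the four blocks as $Q_{++},Q_{+-},Q_{-+},Q_{--}$. Since $Q$ is unitary, $Q^*Q=\id$ and $QQ^*=\id$. Writing these two identities out in block form with respect to the orthogonal decompositions $L^2(\Sigma_{t_1};S^+M|_{\Sigma_{t_1}}) = L^2_{[0,\infty)}\oplus L^2_{(-\infty,0)}$ and $L^2(\Sigma_{t_2};S^+M|_{\Sigma_{t_2}}) = L^2_{(0,\infty)}\oplus L^2_{(-\infty,0]}$ yields, among others, the relations
\begin{align*}
Q_{++}^*Q_{++} + Q_{-+}^*Q_{-+} &= \id, & Q_{+-}^*Q_{+-} + Q_{--}^*Q_{--} &= \id,\\
Q_{++}Q_{++}^* + Q_{+-}Q_{+-}^* &= \id, & Q_{-+}Q_{-+}^* + Q_{--}Q_{--}^* &= \id,
\end{align*}
and the mixed relations $Q_{++}^*Q_{+-}+Q_{-+}^*Q_{--}=0$ and $Q_{++}Q_{-+}^*+Q_{+-}Q_{--}^*=0$.

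First I would prove that $Q_{+-}$ maps $\ker[Q_{--}]$ into $\ker[Q_{++}^*]$. Take $u\in\ker[Q_{--}]$. From $Q_{+-}^*Q_{+-}+Q_{--}^*Q_{--}=\id$ applied to $u$ we get $Q_{+-}^*Q_{+-}u=u$; in particular $Q_{+-}u\neq 0$ whenever $u\neq 0$, so $Q_{+-}|_{\ker[Q_{--}]}$ is injective (and in fact an isometry up to this identity). From the mixed relation $Q_{++}^*Q_{+-}+Q_{-+}^*Q_{--}=0$ applied to $u$, the second term vanishes since $Q_{--}u=0$, so $Q_{++}^*Q_{+-}u=0$, i.e. $Q_{+-}u\in\ker[Q_{++}^*]$. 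This gives a well-defined injective map $\ker[Q_{--}]\to\ker[Q_{++}^*]$.

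Next I would produce the inverse. The natural candidate is $Q_{+-}^*$ restricted to $\ker[Q_{++}^*]$: for $w\in\ker[Q_{++}^*]$, the mixed relation $Q_{++}Q_{-+}^*+Q_{+-}Q_{--}^*=0$ transposed reads $Q_{-+}Q_{++}^*+Q_{--}Q_{+-}^*=0$, so applied to $w$ it gives $Q_{--}Q_{+-}^*w=0$, i.e. $Q_{+-}^*w\in\ker[Q_{--}]$. And $Q_{++}Q_{++}^*+Q_{+-}Q_{+-}^*=\id$ applied to $w\in\ker[Q_{++}^*]$ gives $Q_{+-}Q_{+-}^*w=w$. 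Combined with $Q_{+-}^*Q_{+-}u=u$ for $u\in\ker[Q_{--}]$ from the previous paragraph, this shows $Q_{+-}$ and $Q_{+-}^*$ are mutually inverse bijections between $\ker[Q_{--}]$ and $\ker[Q_{++}^*]$, hence $Q_{+-}$ restricts to an isomorphism $\ker[Q_{--}]\to\ker[Q_{++}^*]$. The statement for $Q_{-+}:\ker[Q_{++}]\to\ker[Q_{--}^*]$ follows by the symmetric argument, interchanging the roles of the first/second and plus/minus blocks and using the remaining four block identities (or simply by applying the already-proved statement to $Q^{-1}=Q^*$, whose block decomposition swaps $Q_{++}\leftrightarrow Q_{--}^*$ and $Q_{+-}\leftrightarrow -Q_{+-}^*$ appropriately).

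I do not expect a serious obstacle here; the only point requiring a little care is bookkeeping of which spectral intervals (closed versus half-open at $0$) appear on $\Sigma_{t_1}$ versus $\Sigma_{t_2}$, so that the orthogonal complements are exactly the ranges of the complementary spectral projections and the four block identities are applied to the correct subspaces. As long as the decompositions used are genuinely orthogonal — which they are, being spectral decompositions of the self-adjoint operators $A_{t_1}$ and $A_{t_2}$ — the unitarity identities close up exactly as above.
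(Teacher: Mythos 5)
Your proposal is correct and follows essentially the same route as the paper: both exploit the unitarity of $Q(t_2,t_1)$ from Lemma~\ref{lem:unitaer}, write out the block identities from $Q^*Q=\id$ and $QQ^*=\id$, deduce $Q_{+-}u\in\ker[Q_{++}^*]$ and $Q_{+-}^*Q_{+-}u=u$ for $u\in\ker[Q_{--}]$, and identify $Q_{+-}^*$ as the inverse via the adjoint identities, with the second statement handled symmetrically.
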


\begin{proof}
Since $Q(t_2,t_1)$ is unitary we have $Q(t_2,t_1)^*Q(t_2,t_1)=\id$.
Spelled out in terms of the matrix entries this means
\begin{align}
\Qpp^*\Qpp + \Qmp^*\Qmp &= \id, \label{eq:Q*Q1}\\
\Qmm^*\Qmm + \Qpm^*\Qpm &= \id, \label{eq:Q*Q2}\\
\Qmp^*\Qmm + \Qpp^*\Qpm &= 0,   \label{eq:Q*Q3}\\
\Qpm^*\Qpp + \Qmm^*\Qmp &= 0.   \label{eq:Q*Q4}
\end{align}
If $u\in\ker[\Qmm]$ then by \eqref{eq:Q*Q3} 
$$
\Qpp^*\Qpm u = - \Qmp^*\Qmm u=0.
$$
Hence $\Qpm u\in\ker[\Qpp^*]$, i.e.\ $\Qpm$ restricts to a map $\ker[\Qmm]\to \ker[\Qpp^*]$. 
Moreover, by \eqref{eq:Q*Q2}, 
$$
u = \Qmm^*\Qmm u + \Qpm^*\Qpm u = \Qpm^*\Qpm u.
$$
Similarly, using  $Q(t_2,t_1)Q(t_2,t_1)^*=\id$, one sees that $\Qpm^*$ restricts to a map $\ker[\Qpp^*]\to \ker[\Qmm]$ and that for $u\in\ker[\Qpp^*]$ we have $\Qpm\Qpm^*u=u$.
Thus $\Qpm^*:\ker[\Qpp^*]\to \ker[\Qmm]$ is the inverse of $\Qpm:\ker[\Qmm]\to \ker[\Qpp^*]$.

The proof of the second statement is analogous.
\end{proof}

\begin{lem} \label{FIOlemma}
 The operators $\Qpm \circ P_{(-\infty,0)}(t_1)$ and $\Qmp \circ P_{(0,\infty)}(t_1)$ map $H^s(\Sigma_{t_1};S^+M|_{\Sigma_{t_1}})$ continuously to $H^{s+1}(\Sigma_{t_2};S^+M|_{\Sigma_{t_2}})$. 
 In particular, $\Qpm$ and $\Qmp$ are compact as operators from $L^2_{(-\infty,0]}(\Sigma_{t_1};S^+M|_{\Sigma_{t_1}})\to L^2(\Sigma_{t_2};S^+M|_{\Sigma_{t_2}})$, and $L^2_{(0,\infty)}(\Sigma_{t_1};S^+M|_{\Sigma_{t_1}})\to L^2(\Sigma_{t_2};S^+M|_{\Sigma_{t_2}})$, respectively.
 Similarly, $\Qmp^*\circ P_{(-\infty,0)}(t_2)$ and $\Qpm^* \circ P_{(0,\infty)}(t_2)$ map $H^s(\Sigma_{t_2};S^+M|_{\Sigma_{t_1}})$ continuously to $H^{s+1}(\Sigma_{t_1};S^+M|_{\Sigma_{t_1}})$. 
\end{lem}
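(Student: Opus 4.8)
The plan is to show that each of the six operators named in the statement is \emph{smoothing of order one}, i.e.\ bounded from $H^s$ to $H^{s+1}$ for every $s\in\R$; the compactness assertions then follow at once from the Rellich embedding $H^1(\Sigma_{t_2})\emb L^2(\Sigma_{t_2})$, which is compact because $\Sigma_{t_2}$ is compact, together with the observation that $P_{(-\infty,0]}(t_1)$ and $P_{(-\infty,0)}(t_1)$ differ by the finite rank projection $P_{\{0\}}(t_1)=\chi_{\{0\}}(A_{t_1})$, so that interchanging them alters the operator only by a smoothing operator.

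First I would record the two structural facts that drive the argument. By Theorem~\ref{thm:InhomoCauchyProblem} and Corollary~\ref{cor:HomoCauchyProblem}, $Q(t',t)\colon H^s(\Sigma_t;S^+M|_{\Sigma_t})\to H^s(\Sigma_{t'};S^+M|_{\Sigma_{t'}})$ is a bounded isomorphism for all $s$ and all $t,t'$. Moreover, since $D$ is of Dirac type with associated normally hyperbolic operators $\tilde D D$ and $D\tilde D$, the parametrix construction for normally hyperbolic operators collected in Appendix~\ref{app:NormHyp} shows that $Q(t',t)$ is a Fourier integral operator whose canonical relation is the null-bicharacteristic flow-out of $\langle\cdot,\cdot\rangle$; as the characteristic variety splits into the two sheets $\tau=\pm|\xi|_{g_t}$, each preserved by the flow, this canonical relation is a disjoint union $C_+\sqcup C_-$ of the flow-outs on the two sheets, and the principal symbol of the $C_\pm$-component of $Q(t',t)$ maps into the $\pm|\xi|_{g_t}$-eigenbundle of the principal symbol $\sigma_{A_t}(x,\xi)$ of $A_t$. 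On the other hand, $P_{(0,\infty)}(t)$ and $P_{(-\infty,0)}(t)$ are zeroth-order pseudodifferential operators whose principal symbols are exactly the spectral projections of $\sigma_{A_t}(x,\xi)$ onto its $+|\xi|_{g_t}$- and $-|\xi|_{g_t}$-eigenspace.

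The heart of the matter is then a symbol bookkeeping. Writing $\Qpm\circ P_{(-\infty,0)}(t_1)=P_{(0,\infty)}(t_2)\,Q(t_2,t_1)\,P_{(-\infty,0)}(t_1)$, the operator $P_{(-\infty,0)}(t_1)$ annihilates the principal symbol of the $C_+$-component of $Q(t_2,t_1)$ and $P_{(0,\infty)}(t_2)$ annihilates that of the $C_-$-component, so the composite is a Fourier integral operator whose leading symbol vanishes on both sheets, hence of order one lower, which gains one Sobolev derivative. The cleanest way to make this precise, and the one I would adopt, is to construct flow-adapted microlocal projections, i.e.\ zeroth-order pseudodifferential operators $\Pi_\pm(t)$ with $\Pi_+(t)+\Pi_-(t)=\id$, $\Pi_\pm(t)^2=\Pi_\pm(t)$ and $\Pi_+(t)\Pi_-(t)=0$ modulo smoothing, intertwined by the evolution in the sense $Q(t',t)\Pi_\pm(t)=\Pi_\pm(t')Q(t',t)$ modulo smoothing, obtained by diagonalising the symbol of the generator $\beta D=\partial_t-\rmi A_t-\tfrac n2 H$ along its two well-separated eigenvalues. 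Since $\Pi_+(t)$, resp.\ $\Pi_-(t)$, has the same principal symbol as $P_{(0,\infty)}(t)$, resp.\ $P_{(-\infty,0)}(t)$, the differences $R_2:=P_{(0,\infty)}(t_2)-\Pi_+(t_2)$ and $R_1:=P_{(-\infty,0)}(t_1)-\Pi_-(t_1)$ are pseudodifferential operators of order $-1$. One then decomposes $P_{(0,\infty)}(t_2)Q(t_2,t_1)P_{(-\infty,0)}(t_1)=\Pi_+(t_2)Q(t_2,t_1)\Pi_-(t_1)+R_2\,Q(t_2,t_1)\,P_{(-\infty,0)}(t_1)+\Pi_+(t_2)\,Q(t_2,t_1)\,R_1$: the first summand equals $\Pi_+(t_2)\Pi_-(t_2)Q(t_2,t_1)$ modulo smoothing and is therefore smoothing, while the other two gain one derivative because $R_1,R_2$ do and $Q(t_2,t_1)$ is bounded on every $H^s$. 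The same argument applies verbatim to $\Qmp\circ P_{(0,\infty)}(t_1)=P_{(-\infty,0)}(t_2)Q(t_2,t_1)P_{(0,\infty)}(t_1)$ modulo a finite rank operator, with the roles of the two sheets interchanged.

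Finally, the adjoint statements reduce to the ones just proved. By Lemma~\ref{lem:unitaer}, $Q(t_2,t_1)$ is unitary on $L^2$, so $Q(t_2,t_1)^*=Q(t_2,t_1)^{-1}=Q(t_1,t_2)$; hence $\Qmp^*\circ P_{(-\infty,0)}(t_2)=P_{(0,\infty)}(t_1)\,Q(t_1,t_2)\,P_{(-\infty,0)}(t_2)$ and $\Qpm^*\circ P_{(0,\infty)}(t_2)=P_{(-\infty,0)}(t_1)\,Q(t_1,t_2)\,P_{(0,\infty)}(t_2)$, and running the argument above with the backward evolution $Q(t_1,t_2)$ — again a Fourier integral operator whose canonical relation preserves the two sheets — yields the gain of one derivative from $H^s(\Sigma_{t_2})$ to $H^{s+1}(\Sigma_{t_1})$. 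I expect the main obstacle to be establishing the Fourier integral operator structure of $Q$ and identifying its canonical relation and principal symbol on each sheet for the genuinely time-dependent metric and for the spinor \emph{system} rather than for a single scalar wave operator — which is the role of Appendix~\ref{app:NormHyp} — and, relatedly, carrying out the diagonalisation producing $\Pi_\pm(t)$ and verifying that the cancellation of leading symbols improves the order by exactly one, which is why the conclusion is $H^{s+1}$ and, away from the product case, no better.
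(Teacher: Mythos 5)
Your argument is correct in outline and reaches the right conclusion, but it takes a genuinely different route from the paper. The paper works directly with the Fourier integral operator structure: using Theorem~\ref{fioth} it writes $Q(t_2,t_1)=\res_{t_2}\circ\tilde D\circ\T_{t_1}\circ\beta$ as a sum of two FIOs $F_\pm$ of order zero associated to the two canonical graphs, computes their principal symbols explicitly (parallel transport composed with Clifford-algebra factors), computes the principal symbols of $P_{(0,\infty)}(t_2)$ and $P_{(-\infty,0)}(t_1)$, and then verifies by a short Clifford-algebra manipulation (using $\tilde\xi_\pm(\nu)=\pm\|\xi_\pm\|$, the anticommutation of $\beta$ with $\gamma_t$, and compatibility of parallel transport with Clifford multiplication) that both composed leading symbols $q_\pm$ vanish; the gain of one derivative then comes from H\"ormander's boundedness theorem for FIOs whose canonical relation is a canonical graph. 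You instead keep the FIO picture only as motivation and shift the real work to constructing flow-adapted pseudodifferential projections $\Pi_\pm(t)$, intertwined with $Q$ modulo smoothing, with the same principal symbols as the spectral projections; your final decomposition then needs only $\psi$DO order bookkeeping, the finite rank of $P_{\{0\}}$, and Sobolev boundedness of $Q$ on every $H^s$ (Corollary~\ref{cor:HomoCauchyProblem}), so it bypasses both the explicit symbol computation and the FIO boundedness theorem. What the paper's route buys is that everything is reduced to one concrete, checkable symbol identity, with the appendix supplying the FIO machinery; what your route buys is a softer argument that isolates the mechanism (each characteristic sheet is killed at leading order by one of the two projections) and, once the $\Pi_\pm(t)$ exist to all orders, would even show the off-diagonal blocks gain arbitrarily many derivatives up to the errors $R_1,R_2$ — but the price is precisely the decoupling construction you only sketch: you must solve the transport/commutator equations order by order for the generator $\partial_t-\rmi A_t-\tfrac n2H$ (a standard construction for first-order hyperbolic systems with separated symbol eigenvalues, plus a Duhamel argument to convert $[\,\text{generator},\Pi_\pm\,]$ being smoothing into the intertwining with $Q$), and this is comparable in length to the paper's symbol computation. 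Two small points to watch: your motivational claim pairing the sheet $C_\pm$ with the $\pm$\nobreakdash-eigenbundle of $\sigma_{A_t}$ may have the pairing reversed relative to the paper's conventions (harmless, since your $\Pi_\pm$-argument treats both sheets symmetrically), and for the adjoint statements you correctly reduce to the backward evolution via unitarity, which matches the paper's ``analogous'' treatment.
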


\begin{proof}
 We show this only for $\Qpm$ and its adjoint.
 The proof for $\Qmp$ is analogous. 
 We will need a description of the principal symbol of $Q(t_2,t_2)$ as a Fourier integral operator. 
 The operator $D \tilde D$ is a normally hyperbolic operator on the globally hyperbolic manifold $M$ and, by Theorem~\ref{fioth}, the solution operator 
 $$
\T_t: C^\infty(\Sigma_{t};S^-M|_{\Sigma_{t}}) \to C^\infty(M;S^-M), \quad f \mapsto u,
 $$
 of the initial value problem
 $$
  D \tilde D u =0, \quad u|_{\Sigma_{t}}=0, \quad (-\nabla_\nu u)|_{\Sigma_{t}}=f,
 $$
 is therefore a Fourier integral operator of order $-\frac{5}{4}$ with principal symbol given by (\ref{princsym}) and canonical relation $C_1$ as described in Theorem~\ref{fioth}.
 The map $\tilde D \circ \T_{t} \circ \beta: f \mapsto u$ solves the Cauchy problem
  $$
  D u =0, \quad u|_{\Sigma_{t}}=f,
 $$
 and by the above it is a Fourier integral operator of order $-\frac{1}{4}$ and canonical relation $C_1$.
We obtain $Q(t_2,t_1) = \res_{t_2} \circ \tilde D \circ \T_{t_1} \circ \beta$.
 The restriction operator $\res_{t_2}$ is a Fourier integral operator of order $\frac{1}{4}$ and the canonical relation of $\res_{t_2}$ is
 $$C_2 =\{((y,\eta),(x,\xi)) \in  T^* \Sigma_{t_2}   \times T^* M \mid (x,\xi) \in \dot T^* M, \; \res_{t_2}^* (x,\xi) = (y,\eta)\}\;.$$
 
 The composition $C=C_2 \circ C_1$ is thus described as follows. 
 If $(y,\eta)$ is in $\dot T^*\Sigma_{t_1}$ then there exist two lightlike covectors $(y,\tilde \eta_+), (y,\tilde\eta_-)\in T^*M$ that restrict to $(y,\eta)$.
  For the sake of definiteness we choose $\tilde\eta_+$ to be future directed and $\tilde\eta_-$ to be past directed.
  The orbit of  $(y,\tilde\eta_\pm)$ under the geodesic flow intersects $T^*M|_{\Sigma_{t_2}}$ at precisely one point
 $(x_\pm,\tilde \xi_\pm)$. Let $(x_\pm, \xi_\pm)$ be the pull back of these co-vectors to $T^* \Sigma_{t_2}$.
 The canonical relation $C$ therefore relates  $(y,\eta)$ to the two points $(x_+, \xi_+)$ and $(x_-, \xi_-)$.
 Since $M$ is globally hyperbolic the composition $C_2 \circ C_1$  is proper and transversal (see the discussion in Section 5.1
 of \cite{Duistermaat:1996aa}).
 This implies that
  $\res_{t_2} \circ \tilde D \circ \T_{t_1} \circ \beta$ is Fourier integral operator of order zero (see for example  \cite[Thm.~25.2.3]{Hormander:1985aaiv})
  with canonical relation $C$.
 In particular, $\res_{t_2} \circ \tilde D \circ \T_{t_1}\circ \beta$ is a sum of two Fourier integral operators
 $F_+$ and $F_-$, such that $F_\pm$ is associated to a canonical map $(y,\eta) \mapsto (x,\xi_\pm)$.
 The principal symbols of these Fourier integral operators can be computed using \cite[Thm.~25.2.3]{Hormander:1985aaiv} or \cite[Thm.~4.2.2]{Duistermaat:1996aa} and are given by
 $$
  \pm \frac{1}{2} \| \eta \|^{-1} \left (-\tilde \xi_\pm(\nu)  \beta + \gamma(\xi_\pm) \right) \circ \Gamma_{(x,\tilde \xi_\pm),(y,\tilde\eta_\pm) } \circ \beta,
 $$
 where $\Gamma_{(x,\tilde \xi_\pm),(y,\tilde\eta_\pm) }$ is the operator of parallel transport from $(y,\tilde\eta_\pm)$ to $(x,\tilde \xi_\pm)$, and $\gamma(\xi_\pm)=-\rmi \beta \gamma_{t_2}(\xi_\pm)$.
 Note that the projections $P_{(0,\infty)}(t_2)$ and $P_{(-\infty,0)}(t_1)$ are pseudodifferential operators
 of order zero acting on $L^2(\Sigma_{t_2};S^+M|_{\Sigma_{t_2}})$. Their principal symbols $\sigma_{P_{(0,\infty)}(t_2)}$ 
 and $\sigma_{P_{(-\infty,0)}(t_1)}$ 
 can be computed from the principal symbol $\sigma_{A_t}$ of the operator $A_t$, which is easily found to be $\sigma_{A_t}(\eta)=-\beta \gamma(\eta)=\rmi \gamma_t(\eta)$.
 One obtains
 \begin{align*}
  \sigma_{P_{(0,\infty)}(t_2)}(y,\eta) &= \frac{1}{2}\left(\id - \beta \| \eta \|^{-1} \gamma(\eta)\right),\\
  \sigma_{P_{(-\infty,0)}(t_1)}(y,\eta) &= \frac{1}{2}\left(\id+\beta \| \eta \|^{-1} \gamma(\eta)\right).
 \end{align*}
 Therefore the operator $\Qpm \circ P_{(-\infty,0)}(t_1)$ is a sum of two Fourier integral operators associated to the canonical maps
 $(y,\eta) \mapsto (x,\xi_\pm)$with principal symbols
 \begin{align*}
 q_\pm(y,\eta)=\pm \frac{1}{8}\| \xi_\pm \|^{-1} \left( \| \xi_\pm \|  - \beta \gamma(\xi_\pm) \right) \left(- \tilde \xi_\pm(\nu) \beta + \gamma(\xi_\pm)\right)  
 \\ \circ  \Gamma_{(x,\tilde \xi_\pm),(y,\tilde\eta_\pm) }  \circ \| \eta \|^{-2} \beta (\| \eta \| + \beta \gamma(\eta)).
 \end{align*}
Since $\tilde \xi_\pm$ is lightlike we have $\tilde \xi_\pm(\nu)=  \mp \| \xi_\pm\|$. 
Thus
$$
 ( \| \xi_- \|  - \beta \gamma(\xi_-)) ( -\tilde \xi_-(\nu) \beta + \gamma(\xi_-))=- \left( \| \xi_- \|  - \beta \gamma(\xi_-)\right)  
 \left ( \| \xi_- \|  + \beta \gamma(\xi_-)\right) \beta=0,$$
 and consequently $q_-(y,\eta)=0$. We have used here that $\beta$ anticommutes with $\gamma(\xi_-)$ and $(\beta \gamma(\xi_-))^2  = \|\xi_-\|^2$.
Clifford multiplication by $\tilde \xi_\pm$ is given by $(\pm \| \xi_\pm \| \beta + \gamma(\xi_\pm))$
and because parallel transport is compatible with Clifford multiplication we have
 $$
 \Gamma_{(x,\tilde \xi_\pm),(y,\tilde \eta_\pm) } \circ (\pm \| \eta \| \beta+\gamma(\eta)) = (\pm \| \xi_\pm \| \beta + \gamma(\xi_\pm)) \circ  
 \Gamma_{(x,\tilde \xi_\pm),(y,\tilde \eta_\pm) }.
 $$
Thus
\begin{align*}
q_+(y,\eta) 
&=
 +\frac{1}{4} \left ( \beta \| \xi_+ \|  +  \gamma(\xi_+) \right) \circ  \Gamma_{(x,\tilde \xi_+),(y,\tilde\eta_+) } 
\circ \| \eta \|^{-2}\left(\| \eta \| - \beta \gamma(\eta)\right) \beta \\ 
&=
\frac{1}{4} \| \eta \|^{-2} \Gamma_{(x,\tilde \xi_+),(y,\tilde\eta_+)} \circ  \beta \left (\| \eta \| + \beta \gamma(\eta) \right) 
\left(\| \eta \| - \beta\gamma(\eta)\right) \beta\\
&=0.
\end{align*}
We have shown that the principal symbols of the zero order operators vanish and therefore $\Qpm \circ P_{(-\infty,0)}(t_1)$ is the sum of two Fourier integral operators of order $-1$, each of them associated to a canonical map.
The statement of the theorem now follows from the mapping properties of Fourier integral operators whose canonical relation is a canonical graph
(see for example \cite[Cor.~25.3.2]{Hormander:1985aaiv}).
 \end{proof}

\begin{cor}\label{cor:Qglatt}
The kernel of $\Qmm$, and $\Qpp$ respectively, consists of smooth sections.
\end{cor}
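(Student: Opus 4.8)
The plan is to use the two isomorphisms from Lemma~\ref{lem:kerQ} together with the smoothing-type estimates of Lemma~\ref{FIOlemma}. I first treat $\ker[\Qmm]$. Let $u \in \ker[\Qmm] \subset L^2_{(-\infty,0)}(\Sigma_{t_1};S^+M|_{\Sigma_{t_1}})$. Since $u$ lies in the range of $P_{(-\infty,0)}(t_1)$, we have $P_{(-\infty,0)}(t_1)u = u$, and therefore $\Qpm u = \Qpm \circ P_{(-\infty,0)}(t_1) u$. By Lemma~\ref{FIOlemma} the operator $\Qpm \circ P_{(-\infty,0)}(t_1)$ maps $H^s(\Sigma_{t_1})$ continuously to $H^{s+1}(\Sigma_{t_2})$ for every $s \in \R$; in particular $\Qpm u \in H^1(\Sigma_{t_2})$. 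By Lemma~\ref{lem:kerQ}, $\Qpm$ restricts to an isomorphism $\ker[\Qmm] \to \ker[\Qpp^*]$ whose inverse is the restriction of $\Qpm^*$, so $u = \Qpm^* (\Qpm u)$.

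Now I bootstrap. Since $\Qpm u \in \ker[\Qpp^*]$ and $\ker[\Qpp^*] \subset L^2_{(0,\infty)}(\Sigma_{t_2};S^+M|_{\Sigma_{t_2}})$ (this is where the spectral projection defining the $++$ block enters: $\Qpp^* = P_{(-\infty,0)}(t_1)\circ Q(t_2,t_1)^*|_{L^2_{(0,\infty)}(\Sigma_{t_2})}$, so its kernel sits inside $L^2_{(0,\infty)}(\Sigma_{t_2})$), we have $P_{(0,\infty)}(t_2)(\Qpm u) = \Qpm u$, hence $\Qpm^*(\Qpm u) = \bigl(\Qpm^* \circ P_{(0,\infty)}(t_2)\bigr)(\Qpm u)$. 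By the last assertion of Lemma~\ref{FIOlemma}, $\Qpm^* \circ P_{(0,\infty)}(t_2)$ maps $H^s(\Sigma_{t_2})$ continuously to $H^{s+1}(\Sigma_{t_1})$ for all $s$. Starting from $u \in L^2 = H^0$, we get $\Qpm u \in H^1$, hence $u = \Qpm^*(\Qpm u) \in H^2$; feeding $u \in H^2$ back in gives $\Qpm u \in H^3$ and $u \in H^4$, and so on. Iterating, $u \in \bigcap_{k} H^{2k}(\Sigma_{t_1})$, so by the Sobolev embedding theorem $u$ is smooth. The argument for $\ker[\Qpp]$ is identical with the roles of $\Qpm$ and $\Qmp$, of $t_1$ and $t_2$, and of the corresponding spectral intervals interchanged, again using Lemma~\ref{lem:kerQ} and Lemma~\ref{FIOlemma}.

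The only mild subtlety — the place one has to be slightly careful rather than merely mechanical — is keeping track of which spectral subspace each element lives in, so that the factor $P_{(-\infty,0)}(t_1)$ (resp.\ $P_{(0,\infty)}(t_2)$) can legitimately be inserted; this is exactly what makes the gain-of-one-derivative half of Lemma~\ref{FIOlemma} applicable, since without these projections $\Qpm$ and $\Qmp$ are merely bounded FIOs of order $0$. Everything else is a routine elliptic-style bootstrap, and no genuine obstacle arises.
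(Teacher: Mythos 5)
Your proof is correct and follows essentially the paper's own route: the paper reads the key identity $\Qpm^*\Qpm u = u$ for $u\in\ker[\Qmm]$ directly off \eqref{eq:Q*Q2} (rather than through Lemma~\ref{lem:kerQ}, whose proof is that same computation) and then bootstraps using the order-improving operators of Lemma~\ref{FIOlemma}, exactly as you do. One cosmetic slip: the adjoint is $\Qpp^* = P_{[0,\infty)}(t_1)\circ Q(t_2,t_1)^*|_{L^2_{(0,\infty)}(\Sigma_{t_2};S^+M|_{\Sigma_{t_2}})}$, not with $P_{(-\infty,0)}(t_1)$; this is harmless, since all you actually need is that $\Qpm$ takes values in $L^2_{(0,\infty)}(\Sigma_{t_2};S^+M|_{\Sigma_{t_2}})$ by the definition of the block decomposition \eqref{eq:QMatrix}.
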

\begin{proof}
Equation~\eqref{eq:Q*Q2} implies that every element in the kernel of $\Qmm$ is in the $(+1)$-eigenspace of $\Qpm^*\Qpm$. 
Lemma~\ref{FIOlemma} then shows that this eigenspace consists of functions with Sobolev regularity of any degree.
The same argument applies to the kernel of $\Qpp$.
\end{proof}

\section{The Dirac operator with Atiyah-Patodi-Singer boundary conditions}
\label{sec:DiracAPS}

In this section we relate the Fredholm property of $\Qmm$ to that of the Dirac operator itself under Atiyah-Patodi-Singer boundary conditions.
The discussion is based on the following auxiliary lemma.
For a proof see e.g.\ \cite[Prop.~A.1]{MR3076058}.

\begin{lem}
\label{lem:Fredholm}
Let $H$ be a Hilbert space, let $E$ and $F$ be Banach spaces and let $L:H \to E$ and $P:H \to F$ be bounded linear maps.
We assume that $P:H \to F$ is onto.

Then $L|_{\ker[P]}: \ker[P] \to E$ is Fredholm of index $k$ if and only if $L\oplus P: H \to E\oplus F$ is Fredholm of index $k$.\qed
\end{lem}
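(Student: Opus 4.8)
The plan is to exhibit $L\oplus P$ as the composition of the operator $L|_{\ker[P]}\oplus\id_F$ with isomorphisms of Banach spaces acting on the source and on the target; since pre- and post-composition with isomorphisms preserve both the Fredholm property and the index, the claimed equivalence then follows at once. First I would use the Hilbert-space structure of $H$ to produce a bounded right inverse of $P$: as $P$ is bounded, $K:=\ker[P]$ is closed, so $H=K\oplus K^\perp$ orthogonally, and since $P$ is onto, $P|_{K^\perp}\colon K^\perp\to F$ is a continuous linear bijection, hence an isomorphism by the open mapping theorem. Setting $R:=(P|_{K^\perp})^{-1}\colon F\to H$, we obtain a bounded operator with $PR=\id_F$ and range $K^\perp$, so that
$$\Psi\colon K\oplus F\to H,\qquad \Psi(h_0,f):=h_0+Rf,$$
is an isomorphism of Banach spaces.

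Next I would compute $(L\oplus P)\circ\Psi$. Using that $Ph_0=0$ for $h_0\in K$ and $PR=\id_F$, one finds $(L\oplus P)\circ\Psi\,(h_0,f)=(Lh_0+LRf,\,f)$. Post-composing with the Banach-space isomorphism $U\colon E\oplus F\to E\oplus F$, $U(e,f):=(e-LRf,\,f)$ --- which is bounded because $LR$ is, with bounded inverse $(e,f)\mapsto(e+LRf,f)$ --- gives $U\circ(L\oplus P)\circ\Psi\,(h_0,f)=(Lh_0,\,f)$, that is,
$$U\circ(L\oplus P)\circ\Psi=\big(L|_{K}\big)\oplus\id_F\colon K\oplus F\longrightarrow E\oplus F\,.$$

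Finally, since $U$ and $\Psi$ are isomorphisms, $L\oplus P$ is Fredholm of index $k$ if and only if $(L|_{K})\oplus\id_F$ is. Because $\id_F$ is Fredholm of index $0$ and the index of a direct sum is the sum of the indices, this holds if and only if $L|_{K}=L|_{\ker[P]}$ is Fredholm of index $k$. I do not expect a genuine obstacle here: the only step requiring care is the use of the Hilbert-space hypothesis on $H$ to split off $\ker[P]$ and obtain the bounded right inverse $R$ --- without a closed complement to $\ker[P]$ this argument, and in fact the statement itself, may fail --- after which everything reduces to a routine $2\times 2$ triangular block manipulation.
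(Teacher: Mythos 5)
Your argument is correct. Note that the paper does not prove this lemma at all --- it simply cites Prop.~A.1 of the B\"ar--Ballmann reference \cite{MR3076058} --- so there is no internal proof to compare against; your factorization $U\circ(L\oplus P)\circ\Psi=(L|_{\ker P})\oplus\id_F$, with $\Psi$ built from the bounded right inverse $R=(P|_{\ker P^\perp})^{-1}$ and $U$ the triangular correction $(e,f)\mapsto(e-LRf,f)$, is a complete and self-contained proof, and all the individual steps (the open mapping theorem for $P|_{K^\perp}$, invertibility of $\Psi$ and $U$, invariance of the index under composition with isomorphisms, additivity of the index over direct sums) check out. The standard alternative, which is essentially what the cited reference does, avoids choosing a complement altogether: one observes that $\ker(L\oplus P)=\ker(L|_{\ker P})$ on the nose, and that surjectivity of $P$ induces an algebraic isomorphism $E/L(\ker P)\to(E\oplus F)/\operatorname{ran}(L\oplus P)$, $e\mapsto[(e,0)]$ (surjective because any $(e,f)$ can be corrected by $(Lh,Ph)$ with $Ph=f$; injective because $(e,0)=(Lh,Ph)$ forces $h\in\ker P$), after which Fredholmness and equality of indices follow since finite-codimensional ranges of bounded operators are automatically closed. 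That route buys slightly more generality: it works verbatim when $H$ is merely Banach, so your closing remark that the statement itself may fail without a closed complement of $\ker P$ is not quite right --- only your particular construction of $\Psi$ needs the Hilbert (or complemented-kernel) hypothesis, not the lemma. This is a side comment and does not affect the validity of your proof as written.
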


\begin{theorem}\label{thm:IndQ=IndD}
The operator 
\begin{align*}
\left(P_{[0,\infty)}(t_1)\circ\res_{t_1}\right) &\oplus \left( P_{(-\infty,0]}(t_2)\circ\res_{t_2}\right) \oplus D: \\
\FE^0(M;D) &\to L^2_{[0,\infty)}(\Sigma_{t_1};S^+M|_{\Sigma_{t_1}}) \oplus L^2_{(-\infty,0]}(\Sigma_{t_2};S^+M|_{\Sigma_{t_2}}) \oplus L^2(M;S^-M)
\end{align*}
is Fredholm and its index satisfies
$$
\ind[\left(P_{[0,\infty)}(t_1)\circ\res_{t_1}\right) \oplus \left( P_{(-\infty,0]}(t_2)\circ\res_{t_2}\right)\oplus D]
= \ind[\Qmm] \, .
$$
\end{theorem}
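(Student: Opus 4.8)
The plan is to strip off the differential-operator component $D$ by means of the abstract Fredholm Lemma~\ref{lem:Fredholm} together with the well-posedness Theorem~\ref{thm:InhomoCauchyProblem}, and thereby reduce the claim to a purely boundary statement about the $2\times2$-matrix \eqref{eq:QMatrix} of the unitary evolution operator.

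First I would set $H:=\FE^0(M;D)$, $F:=L^2(M;S^-M)$ and $E:=L^2_{[0,\infty)}(\Sigma_{t_1};S^+M|_{\Sigma_{t_1}})\oplus L^2_{(-\infty,0]}(\Sigma_{t_2};S^+M|_{\Sigma_{t_2}})$, and take $P:=D\colon H\to F$ and $L:=\big(P_{[0,\infty)}(t_1)\circ\res_{t_1}\big)\oplus\big(P_{(-\infty,0]}(t_2)\circ\res_{t_2}\big)\colon H\to E$. Theorem~\ref{thm:InhomoCauchyProblem} with $s=0$ says that $\res_{t_1}\oplus D$ is an isomorphism, so $P=D$ is onto and $\res_{t_1}$ is continuous on $H$; the same theorem at $t=t_2$ shows $\res_{t_2}$ is continuous on $H$, and the spectral projections are bounded on $L^2$, so $L$ is bounded. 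The operator of the theorem is exactly $L\oplus P$, hence by Lemma~\ref{lem:Fredholm} it is Fredholm of index $k$ if and only if $L|_{\ker[P]}=L|_{\FE^0(M;\ker(D))}$ is.

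Next I would use the isomorphism $\res_{t_1}\colon\FE^0(M;\ker(D))\to L^2(\Sigma_{t_1};S^+M|_{\Sigma_{t_1}})$ of Corollary~\ref{cor:HomoCauchyProblem} to transport $L|_{\ker[P]}$ to the Hilbert space $L^2(\Sigma_{t_1};S^+M|_{\Sigma_{t_1}})$. Since $\res_{t_2}=Q(t_2,t_1)\circ\res_{t_1}$ on $\FE^0(M;\ker(D))$ by the definition of the evolution operator, the operator $L|_{\ker[P]}\circ\res_{t_1}^{-1}$ is
$$
T\colon L^2(\Sigma_{t_1};S^+M|_{\Sigma_{t_1}})\to E,\qquad u\mapsto\big(P_{[0,\infty)}(t_1)u,\;P_{(-\infty,0]}(t_2)Q(t_2,t_1)u\big),
$$
and it remains to show that $T$ is Fredholm with $\ind[T]=\ind[\Qmm]$. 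To see this I would split the domain as $L^2_{[0,\infty)}(\Sigma_{t_1};S^+M|_{\Sigma_{t_1}})\oplus L^2_{(-\infty,0)}(\Sigma_{t_1};S^+M|_{\Sigma_{t_1}})$ and use the given splitting of $E$: then $P_{[0,\infty)}(t_1)$ is the identity on the first summand and zero on the second, while $P_{(-\infty,0]}(t_2)\circ Q(t_2,t_1)$ restricts to $\Qmp$ on the first summand and to $\Qmm$ on the second (cf.\ \eqref{eq:QMatrix} and the formula for $\Qmm$ displayed after it), so $T=\bigl(\begin{smallmatrix}\id&0\\\Qmp&\Qmm\end{smallmatrix}\bigr)$. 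This factors as $\bigl(\begin{smallmatrix}\id&0\\\Qmp&\id\end{smallmatrix}\bigr)\bigl(\begin{smallmatrix}\id&0\\0&\Qmm\end{smallmatrix}\bigr)$ with the left factor boundedly invertible (inverse $\bigl(\begin{smallmatrix}\id&0\\-\Qmp&\id\end{smallmatrix}\bigr)$), so $T$ is Fredholm precisely when $\Qmm$ is, and then $\ind[T]=\ind[\id]+\ind[\Qmm]=\ind[\Qmm]$. Composing the three identifications yields the asserted index equality (and, once the Fredholm property of $\Qmm$ is in hand, the unconditional Fredholmness).

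I do not expect a serious obstacle: all of the analysis — well-posedness of the inhomogeneous Cauchy problem, unitarity of $Q(t_2,t_1)$, continuity of the restriction maps on $\FE^0(M;D)$ and boundedness of the spectral projections — is already available, so the argument is essentially bookkeeping. The one point that genuinely needs care is matching the two boundary projections in $L$ against the rows and columns of \eqref{eq:QMatrix}: the domain of $\Qmm$ is $L^2_{(-\infty,0)}(\Sigma_{t_1})$ with the \emph{open} interval, whereas the projection at $t_1$ entering the APS condition is $P_{[0,\infty)}(t_1)$ with the \emph{closed} interval, so $P_{[0,\infty)}(t_1)$ acts as the identity on $L^2_{[0,\infty)}(\Sigma_{t_1})$; this is exactly what makes the upper-left block of $T$ invertible and prevents $\Sigma_{t_1}$ from contributing to the index.
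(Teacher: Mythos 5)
Your proposal is correct and follows essentially the paper's own route: the same application of Lemma~\ref{lem:Fredholm} with $H=\FE^0(M;D)$, $P=D$ (onto by Theorem~\ref{thm:InhomoCauchyProblem}) and the same boundary operator $L$, followed by the reduction to $\Qmm$ via the evolution operator. The only difference is the final bookkeeping: the paper identifies $\ker[L_0]\cong\ker[\Qmm]$ and $\mathrm{im}(L_0)^\perp\cong\ker[\Qmm^*]$ directly (information it reuses later), whereas you factor the transported operator as a unipotent lower-triangular invertible times $\id\oplus\Qmm$; both arguments invoke the previously established Fredholmness of $\Qmm$, so your version is a legitimate, slightly slicker finish of the same proof.
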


\begin{proof}
Theorem~\ref{thm:InhomoCauchyProblem} with $s=0$ shows that $\FE^0(M;D)$ carries the topology of a Hilbert space and that 
$$
D:\FE^0(M;D) \to  L^2([t_1,t_2],L^2(\Sigma_\bullet)) = L^2(M;S^-M)
$$
is onto.
We can therefore apply Lemma~\ref{lem:Fredholm} with 
\begin{align*}
H&=\FE^0(M;D),\\
E&=L^2_{[0,\infty)}(\Sigma_{t_1};S^+M|_{\Sigma_{t_1}}) \oplus L^2_{(-\infty,0]}(\Sigma_{t_2};S^+M|_{\Sigma_{t_2}}),\\
F&=L^2(M;S^-M),\\
L&= \left(P_{[0,\infty)}(t_1)\circ\res_{t_1}\right) \oplus \left( P_{(-\infty,0]}(t_2)\circ\res_{t_2}\right), \mbox{ and}\\
P&=D.
\end{align*}
It remains to check that the operator $L_0:=L|_{\ker(D)}:\FE^0(M;\ker(D))\to E$ is Fredholm with the same index as $\Qmm$.
As to the kernel we have
\begin{align*}
\ker[L_0] 
&=
\{u\in\FE^0(M;\ker(D)) \mid u|_{\Sigma_{t_1}} \in L^2_{(-\infty,0)}(\Sigma_{t_1};S^+M|_{\Sigma_{t_1}}),\, P_{(-\infty,0]}(t_2)(u|_{\Sigma_{t_2}})=0\}\\
&\cong
\ker[\Qmm]\, .
\end{align*}
The image is given by
\begin{align*}
\mathrm{im}(L_0)
&=
\{(u_1,u_2) \mid u_2 = \Qmp u_1 + \Qmm w \mbox{ for some }w\in L^2_{(-\infty,0)}(\Sigma_{t_1})\} \\
&=
\{(u_1,\Qmp u_1 + \Qmm w ) \mid u_1\in L^2_{[0,\infty)}(\Sigma_{t_1}), \,w\in L^2_{(-\infty,0)}(\Sigma_{t_1})\} \, .
\end{align*}
This shows in particular that the image is closed.
Namely, let $u_{1,i}\in L^2_{[0,\infty)}(\Sigma_{t_1};S^+M|_{\Sigma_{t_1}})$ and $w_i\in L^2_{(-\infty,0)}(\Sigma_{t_1};S^+M|_{\Sigma_{t_1}})$ such that $(u_{1,i},\Qmp u_{1,i} + \Qmm w_i )\to(u_1,u_2)$ in $E$.
Then $\Qmp u_{1,i}\to\Qmp u_1$ and hence $\Qmm w_i \to u_2 -\Qmp u_1$.
Since $\Qmm$ is Fredholm its image is closed and thus $u_2 -\Qmp u_1 = \Qmm w$ for some $w\in L^2_{(-\infty,0)}(\Sigma_{t_1};S^+M|_{\Sigma_{t_1}})$.
Therefore $(u_1,u_2)=(u_1,\Qmp u_1 + \Qmm w)$ lies in $\mathrm{im}(L_0)$.

\reqnomode
Moreover,
\begin{alignat}{3}
\mathrm{im}(L_0)^\perp
&=
\{(v_1,v_2)\in E \mid && (v_1,u_1)_{L^2} + (v_2,\Qmp u_1 + \Qmm w)_{L^2} = 0 & \notag\\
& &&\mbox{for all } u_1\in L^2_{[0,\infty)}(\Sigma_{t_1}), \,w\in L^2_{(-\infty,0)}(\Sigma_{t_1})\} \notag\\
&=
\{(v_1,v_2)\in E \mid && (v_1+\Qmp^* v_2,u_1)_{L^2} + (\Qmm^* v_2, w)_{L^2} = 0 & \notag\\
& &&\mbox{for all } u_1\in L^2_{[0,\infty)}(\Sigma_{t_1}), \,w\in L^2_{(-\infty,0)}(\Sigma_{t_1})\} \notag\\
&=
\{(v_1,v_2)\in E \mid &&  v_1+\Qmp^* v_2=0 \mbox{ and } \Qmm^* v_2=0  \}  & \notag\\
&=
\{(-\Qmp &&^* v_2, v_2) \mid v_2\in\ker[\Qmm^*] \} & \notag\\
&\cong
\ker[\Qmm &&^*]. &  \tag*{\qed}
\end{alignat}
\phantom\qedhere
\end{proof}
\leqnomode

We now consider finite energy spinors which satisfy the Atiyah-Patodi-Singer boundary conditions and define
\begin{equation*}
\FE^s_\APS(M;D) := \{u\in \FE^s(M;D) \mid P_{[0,\infty)}(t_1)(u|_{\Sigma_{t_1}})=0=P_{(-\infty,0]}(t_2)(u|_{\Sigma_{t_2}})\} \, .
\end{equation*}
The space $\FE^s_\APS(M;D)$ is a closed subspace of $\FE^s(M;D)$ and carries the relative topology.

\begin{theorem}\label{thm:IndQ=IndDAPS}
The operator 
$$
D_\APS := D|_{\FE^0_\APS} : \FE^0_\APS(M;D) \to L^2(M;S^-M)
$$
is Fredholm and its index satisfies
$$
\ind[D_\APS] = \ind[\Qmm] \, .
$$
\end{theorem}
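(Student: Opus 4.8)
The plan is to view $\FE^0_\APS(M;D)$ as the kernel of the bounded ``boundary operator''
$$
L' := \bigl(P_{[0,\infty)}(t_1)\circ\res_{t_1}\bigr) \oplus \bigl(P_{(-\infty,0]}(t_2)\circ\res_{t_2}\bigr) : \FE^0(M;D) \longrightarrow F,
$$
where $F := L^2_{[0,\infty)}(\Sigma_{t_1};S^+M|_{\Sigma_{t_1}}) \oplus L^2_{(-\infty,0]}(\Sigma_{t_2};S^+M|_{\Sigma_{t_2}})$, so that $D_\APS = D|_{\ker[L']}$. Theorem~\ref{thm:IndQ=IndD} already asserts that $L'\oplus D : \FE^0(M;D) \to F \oplus L^2(M;S^-M)$ is Fredholm of index $\ind[\Qmm]$, and Lemma~\ref{lem:Fredholm} is exactly the tool for passing from the Fredholm property of $L'\oplus D$ to that of its restriction $D|_{\ker[L']}$ with the same index, provided $L'$ is surjective. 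Hence the only substantive step is to show that $L'$ is onto.

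To establish surjectivity of $L'$ I would argue by an explicit construction. Given $(f_1,f_2)\in F$, use Corollary~\ref{cor:HomoCauchyProblem} with $s=0$ to choose $\Phi_1,\Phi_2\in\FE^0(M;\ker(D))$ with $\Phi_1|_{\Sigma_{t_1}}=f_1$ and $\Phi_2|_{\Sigma_{t_2}}=f_2$. Pick a smooth function $\chi$ on $[t_1,t_2]$ equal to $1$ near $t_1$ and to $0$ near $t_2$, and set $u:=\chi\,\Phi_1+(1-\chi)\,\Phi_2$. Since $\chi$ depends only on $t$, equation~\eqref{eq:DiracSurface+} together with $D\Phi_1=D\Phi_2=0$ gives $Du=\beta\,\chi'\,(\Phi_1-\Phi_2)$, which is a continuous, hence square-integrable in $t$, section of $\{L^2(\Sigma_t;S^-M|_{\Sigma_t})\}_t$ and therefore lies in $L^2(M;S^-M)$; thus $u\in\FE^0(M;D)$. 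By the choice of $\chi$ we have $u|_{\Sigma_{t_1}}=f_1$ and $u|_{\Sigma_{t_2}}=f_2$, and since $f_1$ and $f_2$ already lie in the spectral subspaces $L^2_{[0,\infty)}(\Sigma_{t_1})$ and $L^2_{(-\infty,0]}(\Sigma_{t_2})$ respectively, we get $L'u=(f_1,f_2)$. Hence $L'$ is onto.

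With this in hand I would invoke Lemma~\ref{lem:Fredholm} with $H=\FE^0(M;D)$ (a Hilbert space by the remarks following Theorem~\ref{thm:InhomoCauchyProblem}), $E=L^2(M;S^-M)$, $F$ as above, $L=D$ and $P=L'$. It yields that $D_\APS=D|_{\ker[L']}$ is Fredholm of some index $k$ if and only if $D\oplus L'$ is Fredholm of index $k$; and since $D\oplus L'$ differs from $L'\oplus D$ only by a permutation of the target summands, Theorem~\ref{thm:IndQ=IndD} gives $k=\ind[\Qmm]$, which is the claim. The only point demanding genuine work is the surjectivity of $L'$ above; the rest is bookkeeping around the already-established Theorem~\ref{thm:IndQ=IndD} and the abstract Lemma~\ref{lem:Fredholm}, so I do not anticipate any further obstacle.
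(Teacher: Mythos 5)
Your proposal is correct and follows essentially the same route as the paper: apply Lemma~\ref{lem:Fredholm} with $H=\FE^0(M;D)$, $L=D$ and $P$ the APS boundary map (i.e.\ with the roles of $L$ and $P$ interchanged relative to Theorem~\ref{thm:IndQ=IndD}), and then conclude via Theorem~\ref{thm:IndQ=IndD}. The only difference is that you explicitly verify the surjectivity hypothesis on the boundary map by the cutoff construction $u=\chi\Phi_1+(1-\chi)\Phi_2$ — a step the paper leaves implicit — and that verification is sound (up to an immaterial lapse factor in the commutator $[D,\chi]$).
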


\begin{proof}
This time we apply Lemma~\ref{lem:Fredholm} with
\begin{align*}
H&=\FE^0(M;D),\\
E&=L^2(M;S^-M),\\
F&=L^2_{[0,\infty)}(\Sigma_{t_1};S^+M|_{\Sigma_{t_1}}) \oplus L^2_{(-\infty,0]}(\Sigma_{t_2};S^+M|_{\Sigma_{t_2}}),\\
L&=D,\mbox{ and}\\
P&= \left(P_{[0,\infty)}(t_1)\circ\res_{t_1}\right) \oplus \left( P_{(-\infty,0]}(t_2)\circ\res_{t_2}\right).
\end{align*}
Then we get that $D_\APS$ is Fredholm with $\ind[D_\APS]=\ind[D\oplus P]$.
Theorem~\ref{thm:IndQ=IndD} concludes the proof.
\end{proof}

Looking at finite energy spinors satisfying anti-Atiyah-Patodi-Singer boundary conditions
\begin{equation*}
\FE^s_\aAPS(M;D) := \{u\in \FE^s(M;D) \mid P_{(-\infty,0)}(t_1)(u|_{\Sigma_{t_1}})=0=P_{(0,\infty)}(t_2)(u|_{\Sigma_{t_2}}) \}
\end{equation*}
the same reasoning shows

\reqnomode
\begin{theorem}
The operators
\begin{align*}
\left(P_{(-\infty,0)}(t_1)\circ\res_{t_1}\right) &\oplus \left( P_{(0,\infty)}(t_2)\circ\res_{t_2}\right) \oplus D: \\
\FE^0(M;D) &\to L^2_{(-\infty,0)}(\Sigma_{t_1};S^+M|_{\Sigma_{t_1}}) \oplus L^2_{(0,\infty)}(\Sigma_{t_2};S^+M|_{\Sigma_{t_2}}) \oplus L^2(M;S^-M)
\end{align*}
and
$$
D_\aAPS := D|_{\FE^0_\aAPS} : \FE^0_\aAPS(M;D) \to L^2(M;S^-M)
$$
are Fredholm and their indices satisfy
\begin{align}
\ind[\left(P_{[0,\infty)}(t_1)\circ\res_{t_1}\right) \oplus \left( P_{(-\infty,0]}(t_2)\circ\res_{t_2}\right)\oplus D]
=
\ind[D_\aAPS]
= 
\ind[\Qpp] \,\,.
\tag*{\qed}
\end{align}
\end{theorem}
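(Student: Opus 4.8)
The plan is to repeat the proofs of Theorems~\ref{thm:IndQ=IndD} and \ref{thm:IndQ=IndDAPS} essentially word for word, replacing the APS spectral projections $P_{[0,\infty)}(t_1)$ and $P_{(-\infty,0]}(t_2)$ by the complementary projections $P_{(-\infty,0)}(t_1)$ and $P_{(0,\infty)}(t_2)$ throughout. The only change in the bookkeeping is that the block of the matrix \eqref{eq:QMatrix} governing everything is now $\Qpp$ instead of $\Qmm$, with $\Qpm$ taking over the auxiliary role previously played by $\Qmp$. First I would apply Lemma~\ref{lem:Fredholm} with $H=\FE^0(M;D)$, $E=L^2_{(-\infty,0)}(\Sigma_{t_1};S^+M|_{\Sigma_{t_1}})\oplus L^2_{(0,\infty)}(\Sigma_{t_2};S^+M|_{\Sigma_{t_2}})$, $F=L^2(M;S^-M)$, $L=\left(P_{(-\infty,0)}(t_1)\circ\res_{t_1}\right)\oplus\left(P_{(0,\infty)}(t_2)\circ\res_{t_2}\right)$ and $P=D$; here $P$ is onto by Theorem~\ref{thm:InhomoCauchyProblem} with $s=0$. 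It then remains to identify $L_0:=L|_{\FE^0(M;\ker D)}$ as a Fredholm operator of index $\ind[\Qpp]$.

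For this I would parametrise $\FE^0(M;\ker D)$ by the Cauchy datum $u_0=u|_{\Sigma_{t_1}}$ via Corollary~\ref{cor:HomoCauchyProblem}, so that $u|_{\Sigma_{t_2}}=Q(t_2,t_1)u_0$, and split $u_0=u_0^++u_0^-$ according to $L^2(\Sigma_{t_1})=L^2_{[0,\infty)}(\Sigma_{t_1})\oplus L^2_{(-\infty,0)}(\Sigma_{t_1})$. Using \eqref{eq:QMatrix} one finds $L_0u=\bigl(u_0^-,\ \Qpp u_0^++\Qpm u_0^-\bigr)$, hence $\ker[L_0]\cong\ker[\Qpp]$ and, exactly as in the proof of Theorem~\ref{thm:IndQ=IndD}, $\mathrm{im}(L_0)=\{(u_0^-,\Qpp w+\Qpm u_0^-)\mid w\in L^2_{[0,\infty)}(\Sigma_{t_1}),\ u_0^-\in L^2_{(-\infty,0)}(\Sigma_{t_1})\}$ is closed with $\mathrm{im}(L_0)^\perp=\{(-\Qpm^*v,v)\mid v\in\ker[\Qpp^*]\}\cong\ker[\Qpp^*]$. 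The only non-formal ingredient is that $\Qpp$ is Fredholm; this follows as for $\Qmm$ from the unitarity relations $Q(t_2,t_1)^*Q(t_2,t_1)=Q(t_2,t_1)Q(t_2,t_1)^*=\id$ (cf.\ \eqref{eq:Q*Q1}), which give $\Qpp^*\Qpp=\id-\Qmp^*\Qmp$ and $\Qpp\Qpp^*=\id-\Qpm\Qpm^*$, together with the compactness of $\Qmp$ and $\Qpm$ from Lemma~\ref{FIOlemma}. Thus $L_0$ is Fredholm with $\ind[L_0]=\dim\ker[\Qpp]-\dim\ker[\Qpp^*]=\ind[\Qpp]$, and Lemma~\ref{lem:Fredholm} shows that the first operator in the statement is Fredholm of index $\ind[\Qpp]$.

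Finally, to bring in $D_\aAPS$ I would apply Lemma~\ref{lem:Fredholm} once more to the \emph{same} operator $D\oplus\left(P_{(-\infty,0)}(t_1)\circ\res_{t_1}\right)\oplus\left(P_{(0,\infty)}(t_2)\circ\res_{t_2}\right)$, now with $L=D$, $P=\left(P_{(-\infty,0)}(t_1)\circ\res_{t_1}\right)\oplus\left(P_{(0,\infty)}(t_2)\circ\res_{t_2}\right)$, $E=L^2(M;S^-M)$ and $F$ the corresponding sum of spectral subspaces. As in the proof of Theorem~\ref{thm:IndQ=IndDAPS}, this $P$ is onto: the map $u\mapsto(u|_{\Sigma_{t_1}},u|_{\Sigma_{t_2}})$ from $\FE^0(M;D)$ onto $L^2(\Sigma_{t_1})\oplus L^2(\Sigma_{t_2})$ is surjective --- glue a solution with prescribed datum on $\Sigma_{t_1}$ to one with prescribed datum on $\Sigma_{t_2}$ by a cutoff in $t$, which lies in $\FE^0(M;D)$ since $D$ of the cutoff term lies in $L^2(M;S^-M)$ --- and composing with the obvious surjection onto $F$ preserves surjectivity. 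Since $D|_{\ker P}=D_\aAPS$, Lemma~\ref{lem:Fredholm} yields that $D_\aAPS$ is Fredholm with the same index as that operator, namely $\ind[\Qpp]$. I expect the only point requiring care to be the index bookkeeping: tracking which spectral half-space of $\Sigma_{t_1}$ and of $\Sigma_{t_2}$ occurs and checking that the governing block is indeed $\Qpp$, with $\Qpp$ Fredholm by the argument that handled $\Qmm$; the rest is a line-by-line transcription of the two cited proofs.
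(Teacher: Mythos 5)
Your proposal is correct and is essentially the paper's own argument: the paper gives no separate proof of this theorem but simply states that "the same reasoning shows" it, i.e.\ one repeats the proofs of Theorems~\ref{thm:IndQ=IndD} and \ref{thm:IndQ=IndDAPS} with the complementary spectral projections, so that $\Qpp$ plays the role of $\Qmm$ — exactly what you do, including the (correct) identifications $\ker[L_0]\cong\ker[\Qpp]$ and $\mathrm{im}(L_0)^\perp\cong\ker[\Qpp^*]$. Your extra verifications (Fredholmness of $\Qpp$ from unitarity plus Lemma~\ref{FIOlemma}, and surjectivity of the boundary-projection map via a cutoff gluing) are details the paper leaves implicit, and they are fine.
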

\leqnomode

In the Riemannian setting Dirac operators are elliptic and hence the kernels of Dirac operators consist of smooth spinors, independently of any boundary conditions.
In our Lorentzian situation this is no longer true. 
Solutions to the Dirac equation can have low regularity.
Remarkably, under APS or aAPS boundary conditions solutions are again smooth just as if we had elliptic regularity theory at our disposal.

\begin{theorem}\label{thm:KerDAPSglatt}
The kernels of the operators $D_\APS: \FE^0_\APS(M;S^+M) \to  L^2(M;S^-M)$ and $D_\aAPS: \FE^0_\aAPS(M;S^+M) \to  L^2(M;S^-M)$ consist of smooth spinors.
The indices satisfy
$$
\ind[D_\APS] 
=
-\ind[D_\aAPS]
= 
\dim \ker [D |_{C^\infty_\APS}] - \dim \ker [D |_{C^\infty_\aAPS}].
$$
\end{theorem}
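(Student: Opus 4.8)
\emph{The plan.} I would split the argument into a regularity step --- the only genuinely analytic part --- and a bookkeeping step that assembles Theorems~\ref{thm:InhomoCauchyProblem}--\ref{thm:IndQ=IndDAPS}, the anti-APS analogue stated just above, Corollary~\ref{cor:Qglatt}, and Lemma~\ref{lem:kerQ}.

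\emph{Smoothness and the kernel.} Let $u \in \ker[D_\APS]$; the case of $D_\aAPS$ is verbatim the same after exchanging the relevant spectral projections and the roles of $\Qmm$ and $\Qpp$. By Corollary~\ref{cor:HomoCauchyProblem}, $u$ corresponds, under restriction to $\Sigma_{t_1}$, to $u_0 := u|_{\Sigma_{t_1}}$; the APS condition at $\Sigma_{t_1}$ forces $u_0 \in L^2_{(-\infty,0)}(\Sigma_{t_1};S^+M|_{\Sigma_{t_1}})$, and, writing $Q(t_2,t_1)$ in the block form \eqref{eq:QMatrix}, the APS condition $P_{(-\infty,0]}(t_2)(u|_{\Sigma_{t_2}})=0$ says exactly that $\Qmm u_0 = 0$. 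Thus restriction identifies $\ker[D_\APS] \cong \ker[\Qmm]$ (and likewise $\ker[D_\aAPS] \cong \ker[\Qpp]$). Now Corollary~\ref{cor:Qglatt} gives $u_0 \in H^s(\Sigma_{t_1};S^+M|_{\Sigma_{t_1}})$ for every $s$, whence $u \in \FE^s(M;\ker D)$ for all $s$ by Corollary~\ref{cor:HomoCauchyProblem} (and uniqueness). The step I expect to require the most care is the bootstrap from this slicewise regularity to joint smoothness on $M$: from $Du=0$ and \eqref{eq:DiracSurface+} one has $\nabla_t u = N(\rmi\,A_t - \tfrac{n}{2}H)u$, and since $A_t\colon H^s(\Sigma) \to H^{s-1}(\Sigma)$ is continuous and the families $A_t$, $H$, $N$ are smooth in $t$, differentiating this identity repeatedly in $t$ gives $u \in C^k([t_1,t_2];H^s(\Sigma))$ for all $k,s$, hence $u \in C^\infty(M;S^+M)$ by Sobolev embedding. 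This is the smoothness claim anticipated in the remarks following Theorem~\ref{thm:InhomoCauchyProblem}. In particular $\ker[D_\APS] = \ker[D|_{C^\infty_\APS}]$ and $\ker[D_\aAPS] = \ker[D|_{C^\infty_\aAPS}]$.

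\emph{The index identities.} By Theorem~\ref{thm:IndQ=IndDAPS} and its anti-APS counterpart, $\ind[D_\APS] = \ind[\Qmm]$ and $\ind[D_\aAPS] = \ind[\Qpp]$. Since $\Qmm$ is Fredholm its cokernel is isomorphic to $\ker[\Qmm^*]$, and Lemma~\ref{lem:kerQ} supplies isomorphisms $\Qmp\colon \ker[\Qpp] \to \ker[\Qmm^*]$ and $\Qpm\colon \ker[\Qmm] \to \ker[\Qpp^*]$. Therefore
$$
\ind[\Qmm] = \dim\ker[\Qmm] - \dim\ker[\Qmm^*] = \dim\ker[\Qmm] - \dim\ker[\Qpp] = -\ind[\Qpp],
$$
and combining this with the preceding step,
$$
\ind[D_\APS] = -\ind[D_\aAPS] = \dim\ker[\Qmm] - \dim\ker[\Qpp] = \dim\ker[D|_{C^\infty_\APS}] - \dim\ker[D|_{C^\infty_\aAPS}].
$$

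\emph{Main obstacle.} Everything except the regularity bootstrap is an assembly of facts already established; the bootstrap is where one uses that the Dirac equation converts each $t$-derivative into the loss of one spatial Sobolev order, together with the smoothness of the Cauchy data provided by Corollary~\ref{cor:Qglatt}.
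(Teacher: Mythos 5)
Your proposal is correct and follows essentially the same route as the paper: identify $\ker[D_\APS]\cong\ker[\Qmm]$ (resp. $\ker[D_\aAPS]\cong\ker[\Qpp]$), get smooth Cauchy data from Corollary~\ref{cor:Qglatt}, bootstrap regularity in $t$ via the Dirac equation and Sobolev embedding, and then obtain the index identities from Theorem~\ref{thm:IndQ=IndDAPS}, its aAPS counterpart, and the isomorphisms of Lemma~\ref{lem:kerQ}. The only cosmetic difference is that you spell out $\ind[\Qmm]=-\ind[\Qpp]$ explicitly, whereas the paper phrases the same bookkeeping through $\dim\operatorname{coker}[D_\APS]=\dim\ker[\Qmm^*]=\dim\ker[\Qpp]$.
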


\begin{proof}
It was shown in the proof of Theorem~\ref{thm:IndQ=IndD} that 
$$
\ker[D_\APS]=\ker[L_0]=\{u\in\FE^0(M;D) \mid Du=0 \mbox{ and } u|_{\Sigma_{t_1}} \in \ker[\Qmm]\}\, .
$$
Since $u|_{\Sigma_{t_1}}$ is smooth by Corollary~\ref{cor:Qglatt}, the solution $u$ of the Dirac equation lies in $\FE^s(M;D)$ for any $s\in\R$ by Theorem~\ref{thm:InhomoCauchyProblem}.
The Dirac equation
\begin{equation}
\frac{\partial u}{\partial t} = i N A_tu + \frac{n}{2}N Hu
\label{eq:Dirac}
\end{equation}
implies that $u$ is a $C^1$-section of $\{H^{s-1}(\Sigma_\bullet)\}$.
Differentiating \eqref{eq:Dirac} and iterating the argument shows that $u$ is a $C^k$-section of $\{H^{s-k}(\Sigma_\bullet)\}$ for any $k\in\N$ and any $s\in\R$.
By the Sobolev embedding theorem $u$ is smooth.
The argument for the kernel of $D_\aAPS$ is the same using $\Qpp$ instead of $\Qmm$.

Therefore we have $\ker[D_\APS]=\ker [D|_{C^\infty_\APS}]$ and, by Lemma~\ref{lem:kerQ},
\begin{align*}
\dim\mathrm{coker}[D_\APS]
=
\dim\ker[\Qmm^*]
=
\dim\ker[\Qpp]
=
\dim\ker[D|_{C^\infty_\aAPS}]\, .
\end{align*}
The argument for $\ind[D_\aAPS]$ is analogous.
\end{proof}

\section{The geometric index formula}
\label{sec:Geom}

In this section we give a geometric expression for the index considered in the previous section.
We use the splitting $M=[t_1,t_2]\times\Sigma$ and the Lorentzian metric $-N^2dt^2+g_t$ to introduce the auxiliary ``Wick rotated'' Riemannian metric $\gc := +N^2dt^2 + g_t$.
The formula analogous to \eqref{eq:DiracSurface+} for the \emph{Riemannian} Dirac operator $\Dc$ reads
\begin{equation}
\Dc u = -\betac \left(-\nablac_{\nu} + A_t + \frac{n}{2}\Hc \right) u .
\label{eq:DiracSurfaceR}
\end{equation}
Since the induced metrics $g_t$ on the slices $\Sigma_t$ have not changed, the operators $A_t$ in \eqref{eq:DiracSurface+} and in \eqref{eq:DiracSurfaceR} coincide.
Now we deform $\gc$ near the boundary $\partial M$ to a metric $\gd$ so that $\gd = dt^2 + g_{t_1}$ near $\{t_1\}\times\Sigma$ and $\gd = dt^2 + g_{t_2}$ near $\{t_2\}\times\Sigma$.
Then the corresponding Dirac operator $\Dd=-\betad(-\nablad_\nu+\Bd_t):C^\infty(M;S^+M)\to C^\infty(M;S^-M)$ simplifies to 
$$
\Dd =  -\betad \left( \frac{\partial}{\partial t} + A_{t_j} \right) 
$$
near $\{t_j\}\times\Sigma$ where $j=1,2$.

For any subset $I\subset\R$ denote by $\chi_I:\R\to\R$ the characteristic function of $I$.
Denote the corresponding spectral projections of $A_t$ by $P_I(t)=\chi_I(A_t)$ and the corresponding subspaces of $L^2(\Sigma_t;S^+M|_{\Sigma_t})$ by $L^2_I(\Sigma_t;S^+M|_{\Sigma_t}):=P_I(t)(L^2(\Sigma_t;S^+M|_{\Sigma_t}))$.

The operator $\Dd:H^1_\APS(M;S^+M) \to L^2(M;S^-M)$ is Fredholm where
\begin{align}
H^1_\APS(M;&S^+M) 
= 
\{u\in H^1(M;S^+M) \mid P_{[0,\infty)}(t_1)(u|_{\Sigma_{t_1}}) = 0 = P_{(-\infty,0]}(t_2)(u|_{\Sigma_{t_2}})\} \, .
\label{eq:defHAPS}
\end{align}
By the Atiyah-Patodi-Singer index theorem \cite[Thm.~3.10]{MR0397797} the index is given by
\begin{align}
\ind[\Dd]
&= 
\int_M \Adach (\nablad) - \frac{h(A_{t_1})+\eta(A_{t_1})+h(-A_{t_2})+\eta(-A_{t_2})}{2} \notag\\
&= 
\int_M \Adach (\nablad) - \frac{h(A_{t_1})+h(A_{t_2})+\eta(A_{t_1})-\eta(A_{t_2})}{2}
\label{eq:APS1}
\end{align}
where $\Adach (\nablad)$ is the $\Adach$-form computed from the curvature of the Levi-Civita connection $\nablad$ of $\gd$ and where $h(A)$ denotes the dimension of the kernel of $A$ and $\eta(A)$ its $\eta$-invariant.

Next we express the $\Adach$-integral in terms of the Levi-Civita connection $\nabla$ of the original Lorentzian metric $g$ rather than that of $\gd$.
Let $\adach$ be the $\mathrm{Ad}_{\mathrm{GL}(n+1,\R)}$-invariant polynomial giving rise to the $\Adach$-form,
$$
\Adach (\nabla) = \adach(\Omega^{\nabla},\ldots,\Omega^{\nabla}).
$$
Here $\Omega^{\nabla}$ denotes the curvature $2$-form matrix of $\nabla$ with respect to any local frame of $TM$.
Denote the connection $1$-form matrix of $\nabla$ by $\omega^{\nabla}$.
We define the \emph{trangression form} by
\begin{equation}
\TAdach(\nablad,\nabla) := \frac{n+1}{2} \int_0^1 \adach(\omega^{\nabla}-\omega^{\nablad},\Omega^{(1-s)\nabla+s\nablad},\ldots,\Omega^{(1-s)\nablad+s\nabla}) \, ds \, .
\label{eq:defTransgressionform}
\end{equation}
It is well known that 
$$
\Adach(\nablad) - \Adach(\nabla) = d\,\TAdach (\nablad,\nabla)\, .
$$
The Stokes theorem yields
\begin{equation}
\int_M \Adach (\nablad) = \int_M \Adach (\nabla) + \int_{\partial M}\TAdach (\nablad,\nabla) \, .
\label{eq:ADachStokes}
\end{equation}
Equation \eqref{eq:defTransgressionform} implies easily that the pullback of $\TAdach (\nablad,\nabla)$ to $\partial M$ is a polynomial expression in the curvature tensor of $\nabla$, the second fundamental form of the boundary with respect to $\nabla$, its first derivatives, and the first and second derivatives of the lapse function $N$.
Thus $\TAdach (\nablad,\nabla)$ is determined by the geometry of $(M,g)$ and does not depend on the particular choice of auxiliary metric $\gd$.
Henceforth we write $\TAdach (g)$ instead of $\TAdach (\nablad,\nabla)$.
Inserting \eqref{eq:ADachStokes} into \eqref{eq:APS1} yields
\begin{equation}
\ind[\Dd]
=
\int_M \Adach (\nabla) + \int_{\partial M}\TAdach(g) - \frac{h(A_{t_1})+h(A_{t_2})+\eta(A_{t_1})-\eta(A_{t_2})}{2} \, .
\label{eq:APS2}
\end{equation}

\subsection{Spectral flow}

We put $\xi(A):=\frac12 (h(A)+\eta(A))$.
Then the \emph{spectral flow} of the operator family $(A_t)_{t_1\le t\le t_2}$ is defined as
$$
\SF(A_{t\in [t_1,t_2]}) = j(t_2) 
$$
where $\xi(A_t)=c(t)+j(t)$ is the splitting into a continuous function $c(t)$ and an integer-valued function $j(t)$ with $j(t_1)=0$, cf.\ \cite[Sec.~7]{MR0397799}.
We rewrite \eqref{eq:APS2} as 
\begin{align}
\ind[\Dd]
&=
\int_M \Adach (\nabla) + \int_{\partial M}\TAdach(g) 
+\xi(A_{t_2})-\xi(A_{t_1})-h(A_{t_2}) \notag\\
&=
\int_M \Adach (\nabla) + \int_{\partial M}\TAdach(g) 
+c(t_2)+j(t_2)-c(t_1)-h(A_{t_2}) \notag
\end{align}
We rearrange the terms as
\begin{equation}
\ind[\Dd] - j(t_2) + h(A_{t_2}) = \int_M \Adach (\nabla) + \int_{\partial M}\TAdach(g) + c(t_2)-c(t_1) \, .
\label{eq:APS3}
\end{equation}
All terms on the left hand side of \eqref{eq:APS3} are integer valued.
On the other hand, if we consider the right hand side 
$$
\int_{[t_1,t_2]\times\Sigma} \Adach (\nabla) + \int_{\Sigma_{t_2}}\TAdach(g) - \int_{\Sigma_{t_1}}\TAdach(g) + c(t_2)-c(t_1)
$$
as a function of $t_2$ while keeping $t_1$ fixed, then all terms are continuous in $t_2$.
Hence both sides in \eqref{eq:APS3} are constant.
In the limit $t_2\to t_1$ the RHS tends to $0$.
Therefore
\begin{equation}
\ind[\Dd] - \SF(A_{t\in [t_1,t_2]}) + h(A_{t_2}) = \int_M \Adach (\nabla) + \int_{\partial M}\TAdach(g) + c(t_2)-c(t_1) = 0 \, .
\label{eq:APS4}
\end{equation}
We conclude
\begin{align}
\SF(A_{t\in [t_1,t_2]})
&=
\ind[\Dd] + h(A_{t_2})  \notag\\
&=
\int_M \Adach (\nabla) + \int_{\partial M}\TAdach(g) - \frac{h(A_{t_1})-h(A_{t_2})+\eta(A_{t_1})-\eta(A_{t_2})}{2}\notag\\
&=
\int_M \Adach (\nabla) + \int_{\partial M}\TAdach(g) + \xi(A_{t_2}) - \xi(A_{t_1}) \, .
\label{eq:APS5}
\end{align}

\subsection{Fredholm index of the wave propagator}

Next we relate $\ind[\Qmm]$ to the spectral flow $\SF(A_{t\in[t_1,t_2]})$.
We fix $t_1$ and consider $\ind[Q_{--}(t,t_1)]$ as a function of $t$.
For $t=t_1$, the operator $Q_{--}(t_1,t_1)$ is the embedding 
$$
L^2_{(-\infty,0)}(\Sigma_{t_1};S^+M|_{\Sigma_{t_1}})
\hookrightarrow 
L^2_{(-\infty,0]}(\Sigma_{t_1};S^+M|_{\Sigma_{t_1}})
$$
and hence has 
\begin{equation}
\ind[Q_{--}(t_1,t_1)]=-h(A_{t_1}) .
\label{eq:ind1}
\end{equation}
We use the characterization of spectral flow described in \cite{MR1426691}.
We choose a partition $t_1=\tau_0<\tau_1<\cdots<\tau_N=t_2$ and numbers $a_j>0$ such that $\pm a_j\notin \spec(A_t)$ for all $t\in[\tau_{j-1},\tau_j]$.
Then
\begin{equation}
\SF(A_{t\in[t_1,t_2]}) 
= 
\sum_{j=1}^N \left(\dim L^2_{[0,a_j]}(\Sigma_{\tau_j};S^+M|_{\Sigma_{\tau_{j}}}) - \dim L^2_{[0,a_j]}(\Sigma_{\tau_{j-1}};S^+M|_{\Sigma_{\tau_{j-1}}})\right) \, .
\label{eq:sf}
\end{equation}
Since $a_j\notin\spec(A_t)$ the family of projectors $P_{(-\infty,a_j]}(t)$ is a continuous section of bounded operators on the Hilbert space bundle $\{L^2(\Sigma_t;S^+M|_{\Sigma_t})\}_{t}$ over $[\tau_{j-1},\tau_j]$.
Thus 
$$
P_{(-\infty,a_j]}(t)\circ Q(t,t_1):L^2_{(-\infty,0)}(\Sigma_{t_1};S^+M|_{\Sigma_{t_1}})\to L^2_{(-\infty,a_j]}(\Sigma_{t};S^+M|_{\Sigma_{t}})
$$
is a strongly continuous family of Fredholm operators for $t\in[\tau_{j-1},\tau_j]$.
In particular, 
\begin{equation}
\ind[P_{(-\infty,a_j]}(\tau_j)\circ Q(\tau_j,t_1)]
=
\ind[P_{(-\infty,a_j]}(\tau_{j-1})\circ Q(\tau_{j-1},t_1)],
\label{eq:ind2}
\end{equation}
see \cite[Thm.~19.1.10]{Hormander2007} for details.
If we consider both operators $P_{(-\infty,a_j]}(t)\circ Q(t,t_1))$ and $P_{(-\infty,0]}(t)\circ Q(t,t_1))$ as operators $L^2_{(-\infty,0)}(\Sigma_{t_1};S^+M|_{\Sigma_{t_1}})\to L^2_{(-\infty,a_j]}(\Sigma_{t};S^+M|_{\Sigma_{t}})$, then they differ by $P_{(0,a_j]}(t)\circ Q(t,t_1))$.
Since the spectrum of $A_\tau$ is discrete, this difference operator is of finite rank and hence compact.
Therefore 
\begin{equation}
\ind[P_{(-\infty,a_j]}(t)\circ Q(t,t_1))] 
= 
\ind[P_{(-\infty,0]}(t)\circ Q(t,t_1))]
\label{eq:ind3}
\end{equation}
where both operators are considered as operators from $L^2_{(-\infty,0)}(\Sigma_{t_1};S^+M|_{\Sigma_{t_1}})$ to $L^2_{(-\infty,a_j]}(\Sigma_{t};S^+M|_{\Sigma_{t}})$.
Now $Q_{--}(t,t_1)$ is the same as $P_{(-\infty,0]}(t)\circ Q(t,t_1))$, except that it is considered as an operator to $L^2_{(-\infty,0]}(\Sigma_{t};S^+M|_{\Sigma_{t}})$.
Hence
\begin{equation}
\ind[Q_{--}(t,t_1)] 
= 
\ind[P_{(-\infty,0]}(t)\circ Q(t,t_1))] + \dim L^2_{(0,a_j]}(\Sigma_{t};S^+M|_{\Sigma_{t}})
\label{eq:ind4}
\end{equation}
for $t\in[\tau_{j-1},\tau_j]$.
We conclude
\begin{align*}
\ind[\Qmm]
&\stackrel{\phantom{\eqref{eq:ind4}}}{=}
\sum_{j=1}^N \left(\ind[Q_{--}(\tau_j,t_1)]-\ind[Q_{--}(\tau_{j-1},t_1)]\right) + \ind[Q_{--}(t_1,t_1)] \\
&\stackrel{\eqref{eq:ind4}}{=}
\sum_{j=1}^N \Big(\ind[P_{(-\infty,0]}(\tau_j)\circ Q(\tau_j,t_1)]+\dim L^2_{(0,a_j]}(\Sigma_{\tau_j};S^+M|_{\Sigma_{\tau_j}}) \\
&\;\;\;\;\;\;\;\;\;\;
-\ind[P_{(-\infty,0]}(\tau_{j-1})\circ Q(\tau_{j-1},t_1)] -\dim L^2_{(0,a_j]}(\Sigma_{\tau_{j-1}};S^+M|_{\Sigma_{\tau_{j-1}}})\Big) \\ 
&\;\;\;\;\;\;\;\;\;\;
+ \ind[Q_{--}(t_1,t_1)] \\
&\stackrel{\phantom{\eqref{eq:ind4}}}{=}
\sum_{j=1}^N \Big(\ind[P_{(-\infty,0]}(\tau_j)\circ Q(\tau_j,t_1)]+\dim L^2_{[0,a_j]}(\Sigma_{\tau_j};S^+M|_{\Sigma_{\tau_j}}) \\
&\;\;\;\;\;\;\;\;\;\;
-\ind[P_{(-\infty,0]}(\tau_{j-1})\circ Q(\tau_{j-1},t_1)] -\dim L^2_{[0,a_j]}(\Sigma_{\tau_{j-1}};S^+M|_{\Sigma_{\tau_{j-1}}}) \\ 
&\;\;\;\;\;\;\;\;\;\;
-h(A_{\tau_j}) + h(A_{\tau_{j-1}}) \Big) + \ind[Q_{--}(t_1,t_1)] \\
&\stackrel{\eqref{eq:sf}}{=}
\sum_{j=1}^N \Big(\ind[P_{(-\infty,0]}(\tau_j)\circ Q(\tau_j,t_1)]-\ind[P_{(-\infty,0]}(\tau_{j-1})\circ Q(\tau_{j-1},t_1)]\Big) \\
&\;\;\;\;\;\;\;\;\;\;
+ \SF(A_{t\in[t_1,t_2]}) - h(A_{t_2}) + h(A_{t_1}) + \ind[Q_{--}(t_1,t_1)] \\
&\stackrel{\eqref{eq:ind1}}{=}
\sum_{j=1}^N \Big(\ind[P_{(-\infty,0]}(\tau_j)\circ Q(\tau_j,t_1)]-\ind[P_{(-\infty,0]}(\tau_{j-1})\circ Q(\tau_{j-1},t_1)]\Big) \\
&\;\;\;\;\;\;\;\;\;\;
+ \SF(A_{t\in[t_1,t_2]}) -h(A_{t_2}) \\
&\stackrel{\eqref{eq:ind3}}{=}
\sum_{j=1}^N \Big(\ind[P_{(-\infty,a_j]}(\tau_j)\circ Q(\tau_j,t_1)]-\ind[P_{(-\infty,a_j]}(\tau_{j-1})\circ Q(\tau_{j-1},t_1)]\Big) \\
&\;\;\;\;\;\;\;\;\;\;
+ \SF(A_{t\in[t_1,t_2]}) -h(A_{t_2}) \\
&\stackrel{\eqref{eq:ind2}}{=}
\SF(A_{t\in[t_1,t_2]}) -h(A_{t_2}) \\
&\stackrel{\eqref{eq:APS5}}{=}
\int_M \Adach (\nabla) + \int_{\partial M}\TAdach(g) - \frac{h(A_{t_1})+h(A_{t_2})+\eta(A_{t_1})-\eta(A_{t_2})}{2}\, . \\
\end{align*}

We have proved:

\reqnomode
\begin{theorem}\label{thm:IndGeom}
The index of $\Qmm:L^2_{(-\infty,0)}(\Sigma_{t_1};S^+M|_{\Sigma_{t_1}})\to L^2_{(-\infty,0]}(\Sigma_{t_2};S^+M|_{\Sigma_{t_2}})$ is given by
\begin{align*}
\ind[\Qmm] 
= 
\int_M \Adach (\nabla) + \int_{\partial M}\TAdach(g) - \frac{h(A_{t_1})+h(A_{t_2})+\eta(A_{t_1})-\eta(A_{t_2})}{2}\, .
\tag*{\qed}
\end{align*}
\end{theorem}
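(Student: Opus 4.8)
The plan is to connect $\ind[\Qmm]$ to the spectral flow of the family $(A_t)_{t\in[t_1,t_2]}$ of Dirac operators on the Cauchy slices, and then to identify the spectral flow with the geometric right-hand side by comparison with the classical Riemannian Atiyah--Patodi--Singer theorem. First I would introduce the Wick-rotated Riemannian metric $\gc = N^2\,dt^2 + g_t$ on $M$, note that the induced boundary operators $A_{t_1},A_{t_2}$ are unchanged, and deform $\gc$ near $\partial M$ to a metric $\gd$ that is a product $dt^2+g_{t_j}$ near $\{t_j\}\times\Sigma$. The Riemannian Dirac operator $\Dd$ under APS boundary conditions on $H^1_\APS(M;S^+M)$ is Fredholm, and the Riemannian APS theorem expresses $\ind[\Dd]$ as $\int_M\Adach(\nablad)$ minus the boundary $h$- and $\eta$-contributions at $t_1$ and $t_2$. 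Using the transgression form $\TAdach(\nablad,\nabla)$, whose exterior differential is $\Adach(\nablad)-\Adach(\nabla)$, together with Stokes' theorem, I would rewrite $\int_M\Adach(\nablad)$ as $\int_M\Adach(\nabla)+\int_{\partial M}\TAdach(g)$, checking that the pullback of the transgression form to $\partial M$ is a universal polynomial in the Lorentzian curvature, the second fundamental form, and derivatives of the lapse, hence independent of the auxiliary choice $\gd$.

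Next I would set $\xi(A):=\tfrac12(h(A)+\eta(A))$ and write $\xi(A_t)=c(t)+j(t)$ with $c$ continuous, $j$ integer valued and $j(t_1)=0$, so that $\SF(A_{t\in[t_1,t_2]})=j(t_2)$. Rewriting the APS formula for $\ind[\Dd]$ in terms of $\xi$ gives that $\ind[\Dd]-\SF(A_{t\in[t_1,t_2]})+h(A_{t_2})$ equals $\int_M\Adach(\nabla)+\int_{\partial M}\TAdach(g)+c(t_2)-c(t_1)$. The left-hand side is integer valued while, viewed as a function of $t_2$ with $t_1$ fixed, the right-hand side is continuous; hence both are constant, and the $t_2\to t_1$ limit forces them to vanish. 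This yields $\SF(A_{t\in[t_1,t_2]})=\ind[\Dd]+h(A_{t_2})=\int_M\Adach(\nabla)+\int_{\partial M}\TAdach(g)+\xi(A_{t_2})-\xi(A_{t_1})$.

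Finally I would relate $\ind[\Qmm]$ to $\SF(A_{t\in[t_1,t_2]})$ by a continuation argument. Fixing $t_1$ and viewing $\ind[Q_{--}(t,t_1)]$ as a function of $t$, one has $Q_{--}(t_1,t_1)$ equal to the inclusion $L^2_{(-\infty,0)}(\Sigma_{t_1})\hookrightarrow L^2_{(-\infty,0]}(\Sigma_{t_1})$, so $\ind[Q_{--}(t_1,t_1)]=-h(A_{t_1})$. Choosing a partition $t_1=\tau_0<\dots<\tau_N=t_2$ and cutoffs $a_j>0$ with $\pm a_j\notin\spec(A_t)$ on $[\tau_{j-1},\tau_j]$, the family $P_{(-\infty,a_j]}(t)\circ Q(t,t_1)$ is norm-continuous in Fredholm operators, hence has constant index on each $[\tau_{j-1},\tau_j]$; it differs from $P_{(-\infty,0]}(t)\circ Q(t,t_1)$ by the finite-rank $P_{(0,a_j]}(t)\circ Q(t,t_1)$, and $Q_{--}(t,t_1)$ is the same map with enlarged target $L^2_{(-\infty,0]}(\Sigma_t)$. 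Telescoping $\sum_j(\ind[Q_{--}(\tau_j,t_1)]-\ind[Q_{--}(\tau_{j-1},t_1)])$ and bookkeeping the jumps of $\dim L^2_{[0,a_j]}(\Sigma_{\tau_j})$ against the combinatorial formula for $\SF$ from \cite{MR1426691} gives $\ind[\Qmm]=\SF(A_{t\in[t_1,t_2]})-h(A_{t_2})$. Substituting the spectral-flow formula from the previous step produces the asserted identity. I expect the last paragraph to be the main obstacle: one must carefully track which Hilbert space each truncated propagator maps into and control how the Fredholm index jumps when $0$ enters or leaves $\spec(A_t)$, reconciling this with the definition of spectral flow without losing or gaining a spurious $h(A_t)$ term. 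The transgression computation and the $t_2\to t_1$ continuity trick are comparatively routine once the setup is in place.
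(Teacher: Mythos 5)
Your proposal is correct and follows essentially the same route as the paper's proof in Section~\ref{sec:Geom}: Wick rotation to an auxiliary Riemannian metric deformed to a product near $\partial M$, the Riemannian APS theorem plus the transgression form and Stokes, the $\xi(A_t)=c(t)+j(t)$ integrality-versus-continuity argument identifying the spectral flow, and finally the partition/cutoff continuation argument (with $Q_{--}(t_1,t_1)$ the inclusion of index $-h(A_{t_1})$ and finite-rank comparisons via $P_{(0,a_j]}(t)$) yielding $\ind[\Qmm]=\SF(A_{t\in[t_1,t_2]})-h(A_{t_2})$. The bookkeeping step you flag as the main obstacle is handled in the paper exactly as you outline it, so no changes are needed.
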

\leqnomode

\begin{rem}
The spectral flow of the operator family $A_{t\in[t_1,t_2]}$ was crucial to connect the original Lorentzian index problem to a Riemannian one which we understand by the classical APS-index theorem.
On the Lorentzian side this relates directly to the wave propagator $Q(t_2,t_1)$ and its $Q_{--}(t_2,t_1)$-part.
This is based on the well-posedness of the Cauchy problem and there is no analog for $Q_{--}(t_2,t_1)$ in the Riemannian case.
\end{rem}


\section{An example}
\label{sec:example}

We describe an example with nontrivial index in any dimension divisible by $4$.
Let $k\in\N$.
The manifold will be $M=[t_1,t_2]\times S^{4k-1}$ equipped with a metric of the form $-dt^2 + g_t$ where $g_t$ is a $1$-parameter family of metrics on the sphere $S^{4k-1}$ arising as follows:

Consider the Hopf fibration $S^{4k-1} \to \C\P^{2k-1}$.
If $S^{4k-1}$ carries its canonical metric of constant sectional curvature $1$ and $\C\P^{2k-1}$ the Fubini-Study metric with sectional curvature between $1$ and $4$, then the Hopf fibration is a Riemannian submersion.
The fibers are great circles.
We now rescale the standard metric of $S^{4k-1}$ by the factor $t>0$ along the fibers and keep the metric unchanged on the orthogonal complement to the fibers.
This yields a $1$-parameter family of metrics $g_t$ on $S^{4k-1}$ known as \emph{Berger metrics}.

All Berger metrics are homogeneous under the unitary group $\mathrm{U}(2k)$.
Representation theoretic methods have been used to compute the full spectrum of the Dirac operator $A_t$ on $(S^{4k-1},g_t)$ in \cite[Prop.~3.2]{zbMATH03446111} for the case $k=1$ and in \cite[Thm.~3.1]{zbMATH00966366} for general $k$.
As a corollary one obtains a Dirac eigenvalue $\lambda(t) = (-1)^{k-1}(\tfrac{t}{2}-2k)$ of multiplicity $2k \choose k$.
This eigenvalue vanishes for $t=4k$ while all other Dirac eigenvalues on $(S^{4k-1},g_{4k})$ are nonzero.
Hence, if we choose $t_1$ slightly smaller than $4k$ and $t_2$ slightly bigger, then the spectral flow is $\SF(A_{t\in [t_1,t_2]}) = (-1)^{k-1} {2k \choose k}$.
We conclude
\begin{align*}
\ind[D_{\APS}] 
=
\ind[\Qmm]
&=
\SF(A_{t\in [t_1,t_2]}) 
= 
(-1)^{k-1} {2k \choose k}.
\end{align*}


\section{A parametrix for the APS Dirac operator in case the metric near the boundary is of product type}
\label{sec:Feynman}

Since the Dirac operator subject to APS boundary conditions is a Fredholm operator its equivalence class in the Calkin algebra is invertible.
In the special case when the Lorentzian metric is of product type near the boundary components $\Sigma_\pm$ it is in fact possible to construct an inverse modulo smoothing operators.

\ausblenden{Bild}{
\begin{pspicture}(-7,-3)(6,2.5)
\psset{viewpoint=-30 10 15, Decran=50, lightsrc=-20 20 15}
\defFunction{Lorentz}(u,v)
 {u Cos 0.4 mul 0.6 add v Cos mul}
 {u}
 {u Cos 0.4 mul 0.6 add v Sin mul}
\defFunction{HalsDick}(u,v)
 {v Cos}
 {u}
 {v Sin}
\defFunction{HalsDuenn}(u,v)
 {0.2 v Cos mul}
 {u}
 {0.2 v Sin mul}
\defFunction{Splus}(v)
 {v Cos}
 {-1}
 {v Sin}
\defFunction{Sminus}(v)
 {0.2 v Cos mul}
 {pi 0.5 add}
 {0.2 v Sin mul}

\psSolid[object=surfaceparametree,
        base=-1 0 0 2 pi mul,
        fillcolor=RoyalBlue!70,
        incolor=white,
        opacity=0.7,
        function=HalsDick,
        ngrid=20 180,
        grid=false]%
\psSolid[object=surfaceparametree,
        base=0 pi 0 2 pi mul,
        fillcolor=blue!70,
        incolor=black,
        opacity=0.7,
        function=Lorentz,
        ngrid=20 180,
        grid=false]%
\psSolid[object=surfaceparametree,
        base=pi pi 0.5 add 0 2 pi mul,
        fillcolor=RoyalBlue!70,
        incolor=yellow!50,
        opacity=0.7,
        function=HalsDuenn,
        ngrid=120 180,
        grid=false]%
\psSolid[object=courbe,
        range=1.1 4.22,
        r=0,
        ngrid=360,
        linecolor=black,
        linewidth=0.02,
        function=Splus]%
\psSolid[object=courbe,
        range=0 2 pi mul,
        r=0,
        ngrid=360,
        linecolor=black,
        linewidth=0.02,
        function=Sminus]%
       
\psPoint(-3,0,1){M}
\uput[u](M){\psframebox*[framearc=.3]{$M$}}
\psPoint(-3,3.7,1.5){S-}
\uput[u](S-){\psframebox*[framearc=.3]{$\Sigma_-$}}
\psPoint(-3,-1.5,1){S+}
\uput[u](S+){\psframebox*[framearc=.3]{$\Sigma_+$}}
\end{pspicture}
\begin{center}
\textbf{Fig.~2.}
\emph{The manifold $M$ with product structure near the boundary}
\end{center}
}

Such a parametrix has the same microlocal properties as the Feynman propagator and we will refer to it as a Feynman parametrix.
The Feynman propagator is a fundamental solution of the Dirac operator that was introduced by Richard Feynman to describe the physics of the electron in Quantum Electrodynamics. Feynman parametrices appear in quantum field theory on curved spacetimes and were constructed as one type of distinguished parametrix in the seminal paper of Duistermaat and H\"ormander \cite{MR0388464}. 

\subsection{Feynman parametrices}

Let $L$ be a normally hyperbolic operator on $X$ acting on the sections of a vector bundle $E$, see Appendix~\ref{app:NormHyp} for details.
Let $G$ be a left parametrix for $L$, i.e.\ $G$ is a continuous map $G: C^\infty_0(X;E) \to C^\infty(E)$ and
$$
 G L = \mathrm{id}_{C^\infty_0} + R,
$$
where $R$ is an operator with smooth integral kernel.
As usual, define $\mathrm{WF}'(G)$ as the set
$$
 \mathrm{WF}'(G) = \{(x,\xi;x',\xi') \in \dot T^*(X \times X) \mid (x,\xi;x',-\xi') \in \mathrm{WF}(G)\},
$$
where $\dot T^*(X \times X)=T^*(X \times X) \backslash 0$, and $\mathrm{WF}(G)$ is the wavefront set of the
distributional integral kernel of $G$.
Let $\Delta^*$ be the diagonal in $\dot T^*(X \times X)$ and $\Phi_t$ the geodesic flow on $\dot T^*X$.
We put
$$
 \Lambda_F:=\Delta^* \cup \{(x,\xi;x',\xi') \in \dot T^*(X \times X) \mid \xi \textrm{ is lightlike and}\;  
 \exists\, t>0: \Phi_t(x',\xi') = (x,\xi) \}.
$$
Then we say that $G$ is a \emph{Feynman parametrix} if
$$
 \mathrm{WF}'(G) \subset \Lambda_F .
$$

Feynman parametrices are known to exist and are unique up to smoothing operators on any globally hyperbolic spacetime
(see \cite[Thm.~6.5.3]{MR0388464} taking into account that globally hyperbolic spacetimes are pseudoconvex with respect to normally hyperbolic operators).
Every Feynman parametrix is also a right parametrix, i.e.\
$$
 L G = \mathrm{id}_{C^\infty_0} + \tilde R ,
$$
and extends to a continuous map $G: H^s_0(X;E) \to H^{s+1}_{\loc}(X;E)$ for all $s \in \R$.

Analogously, a left parametrix $S$ for a Dirac type Dirac operator $\D$ called a Feynman parametrix if $\mathrm{WF}'(S) \subset \Lambda_F$. 
By the same reasoning as in \cite[Thm.~6.5.3]{MR0388464} such parametrices are unique up to smoothing operators. 
Since $L=\D^2$ is normally hyperbolic there exists a Feynman parametrix  $G$ for $L$. The operator $\D^2$ is a direct sum of $D \tilde D$ and $\tilde D D$.
The Feynman parametrix  $G$ can therefore also be assumed to be the direct sum of Feynman parametrices for $D \tilde D$ and $\tilde D D$, respectively.
\begin{lem}
 The right parametrix $\mathcal{S}:=\D G$ for $\D$ is a Feynman parametrix.
 It maps $H^s_0(X;S^\pm X)$ continuously to $H^{s}_{\loc}(X;S^\mp X)$ for all $s \in \R$.
\end{lem}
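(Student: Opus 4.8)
The plan is to verify the two claims about $\mathcal{S}=\D G$ separately, using the structural properties of $G$ recalled just above: that $G$ is a Feynman parametrix for $L=\D^2$, that it is simultaneously a left and right parametrix, and that it gains one Sobolev degree. First I would establish that $\mathcal{S}$ is indeed a left parametrix for $\D$. Since $G$ is a left parametrix for $L$ we have $GL = G\D^2 = \mathrm{id} + R$ with $R$ smoothing; hence $(\D G)\D = \D(G\D^2)\D^{-1}$ is not quite the right manipulation, so instead I would note $G\D\cdot\D = \mathrm{id}+R$ so that, applying $\D$ on the left, $\D G \D \cdot \D = \D + \D R$; a cleaner route is to use that $G$ is also a right parametrix, $\D^2 G = \mathrm{id}+\tilde R$, whence $\D(\D G) = \mathrm{id}+\tilde R$, so $\mathcal{S}=\D G$ is a right parametrix for $\D$ directly (as already stated). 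To see it is also a left parametrix, one uses uniqueness of parametrices modulo smoothing operators for $\D$ together with the fact that $\D$ has a two-sided parametrix (being of real principal type on a globally hyperbolic, hence pseudoconvex, manifold), so any right parametrix differs from a two-sided one by a smoothing operator and is therefore itself two-sided.

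Next I would compute the wavefront set. The key inclusion is $\mathrm{WF}'(\mathcal{S}) \subset \mathrm{WF}'(G)$, which follows because composition with the differential operator $\D$ cannot enlarge the wavefront set: for any differential operator $P$ and distribution kernel $K$, $\mathrm{WF}(PK)\subset\mathrm{WF}(K)$. Since $\mathrm{WF}'(G)\subset\Lambda_F$ by hypothesis, we get $\mathrm{WF}'(\mathcal{S})\subset\Lambda_F$, which is exactly the defining property of a Feynman parametrix for $\D$. Here I would remark that $\Lambda_F$ is characterized using the same geodesic (= null bicharacteristic) flow $\Phi_t$ whether one works with $\D$ or with $L=\D^2$, because the characteristic variety of $\D$ equals that of $L$, namely the light cone; so no adjustment of $\Lambda_F$ is needed when passing from $L$ to $\D$.

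For the mapping property, I would argue as follows. The parametrix $G$ for $L$ maps $H^s_0(X;SX)$ continuously to $H^{s+1}_{\mathrm{loc}}(X;SX)$, and since $G$ can be taken to respect the splitting $SX = S^+X\oplus S^-X$ (as $L = D\tilde D \oplus \tilde D D$), it maps $H^s_0(X;S^\pm X)\to H^{s+1}_{\mathrm{loc}}(X;S^\pm X)$. The operator $\D$ is a first-order differential operator interchanging $S^+X$ and $S^-X$, hence maps $H^{s+1}_{\mathrm{loc}}(X;S^\pm X)\to H^s_{\mathrm{loc}}(X;S^\mp X)$ continuously. Composing, $\mathcal{S}=\D G$ maps $H^s_0(X;S^\pm X)\to H^s_{\mathrm{loc}}(X;S^\mp X)$ continuously, as claimed.

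The one point requiring genuine care — the main obstacle — is the claim that $\mathcal{S}$ is a \emph{left} parametrix, not merely a right one. The manipulation $\D(\D G) = \mathrm{id}+\tilde R$ gives the right-parametrix property immediately, but $(\D G)\D = \mathrm{id}+(\text{smoothing})$ is not a formal consequence of $GL=\mathrm{id}+R$ alone; it needs the uniqueness-modulo-smoothing statement for parametrices of $\D$, together with the existence of some two-sided parametrix of $\D$ on the globally hyperbolic (hence pseudoconvex for $\D$) manifold $X$. I would therefore invoke the Duistermaat–Hörmander theory in the form already cited, \cite{MR0388464}, exactly as the paper does for $L$, and note that the argument of \cite[Thm.~6.5.3]{MR0388464} applies verbatim to $\D$: real principal type plus pseudoconvexity yields existence of distinguished (in particular Feynman) two-sided parametrices, unique modulo $C^\infty$. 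Then $\mathcal{S}=\D G$, being a right parametrix with $\mathrm{WF}'\subset\Lambda_F$, must agree modulo smoothing with the Feynman parametrix provided by that theory, and is thus a genuine two-sided Feynman parametrix for $\D$.
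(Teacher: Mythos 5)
Your handling of the easy parts matches the paper: the right-parametrix identity $\D(\D G)=LG=\mathrm{id}+\tilde R$, the observation that composing with the differential operator $\D$ in the first variable does not enlarge the wavefront set (so $\mathrm{WF}'(\D G)\subset\mathrm{WF}'(G)\subset\Lambda_F$), and the Sobolev mapping property via the chirality splitting $L=D\tilde D\oplus\tilde DD$ and the first-order boundedness of $\D$ are all fine. The gap is exactly at the step you flag as the crucial one, the left-parametrix property. You derive it from ``existence of a two-sided Feynman parametrix for $\D$, since \cite[Thm.~6.5.3]{MR0388464} applies verbatim to $\D$''. That theorem is about operators of real principal type with \emph{scalar} real principal symbol; the Dirac operator is a first-order system whose principal symbol $\gamma(\xi)$ is not scalar (it has a half-dimensional kernel on the characteristic set), so the theorem does not apply verbatim, and producing a Feynman parametrix for $\D$ is precisely the content of the lemma you are proving. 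The paper deliberately borrows from \cite{MR0388464} only the \emph{uniqueness} reasoning for Dirac-type operators and obtains \emph{existence} by reduction to the normally hyperbolic operator $L=\D^2$; your argument is therefore circular (or rests on a citation that does not cover systems) at the one point where work is needed. Note also that your intermediate formulation ``any right parametrix differs from a two-sided one by a smoothing operator'' is false without the Feynman wavefront constraint (advanced and retarded parametrices differ by a non-smoothing operator); you add the constraint only in the last paragraph.

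There is a second, more technical issue: even granting a two-sided Feynman parametrix $E$ for $\D$, the conclusion that the right parametrix $\mathcal{S}$ with $\mathrm{WF}'(\mathcal{S})\subset\Lambda_F$ agrees with $E$ modulo smoothing is not a formal algebraic consequence, because the naive manipulation $\mathcal{S}-E=(E\D)\mathcal{S}-E(\D\mathcal{S})$ involves compositions of operators that are not properly supported and are not composable without an argument. This is exactly what the paper's proof supplies: it compares $\mathcal{S}=\D G$ with the left parametrix $\mathcal{S}'=G\D$ by inserting the compactly supported commutator $\vp\D-\D\vp$, where $\vp\equiv 1$ near the lightlike geodesic segment joining a given point of $\Lambda_F$, so that the sandwiched composition is well defined, and then uses $\mathrm{WF}'(\mathcal{S}),\mathrm{WF}'(\mathcal{S}')\subset\Lambda_F$ to conclude that $\mathrm{WF}'(\mathcal{S}-\mathcal{S}')$ meets no point of $\Lambda_F$ and hence is empty. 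To close your proof you should either reproduce this cutoff/commutator argument (which is the rigorous form of the DH-style uniqueness in the right-versus-left comparison you need) or invoke a genuine systems version of the distinguished-parametrix theory rather than \cite[Thm.~6.5.3]{MR0388464} as stated.
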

\begin{proof}
 The wavefront set relation is preserved by the composition with $\D$ from the left.
 The only thing to show here is that $\mathcal{S}$ is a left parametrix.
 This follows from a variation of the argument for the uniqueness of distinguished parametrices in \cite{MR0388464}.
 Namely, let $\mathcal{S}'$ be the left parametrix $G \D$, and define $R := \mathcal{S} - \mathcal{S}'$.
 We will show that $R$ is smoothing, i.e.\ $\mathrm{WF}'(R)=\emptyset$.
 
 Clearly, $\mathrm{WF}'(R)\subset\Lambda_F$, so let $(x,\xi;x',\xi') \in \Lambda_F$.
 This implies that either $(x,\xi)=(x',\xi')$ or there exists a lightlike geodesic connecting $(x,\xi)$ and $(x',\xi')$.
 Denote this geodesic arc by $K\subset X$ and choose $\vp \in C^\infty_0(X)$ such that $\vp(x)=1$ for all $x$ in an open neighborhood of $K$.
 Then, since the first order operator $\vp \D - \D \vp$ is supported away from $K$, the point $(x,\xi;x',\xi')$ is not in the wavefront set of $\vp \D - \D \vp$.
 Since
 $$
  \mathcal{S} (\vp \D - \D \vp) \mathcal{S}' = \mathcal{S} \vp - \vp \mathcal{S}'
 $$
 the point $(x,\xi;x',\xi')$ is not in the wavefront set of $\mathcal{S} \vp - \vp \mathcal{S}'$.
 Since this is true for any $(x,\xi;x',\xi') \in \Lambda_F$ the operator $\mathcal{S} - \mathcal{S}'$ has smooth kernel.
 \end{proof}

\subsection{A Feynman parametrix in the product case}
It is instructive to see what a Feynman parametrix for the normally hyperbolic operator $L= \D^2$ looks like in the case $X = \R \times \Sigma$ equipped with a product metric. 
In this case $L$ has the form
$$
 L=\frac{\partial^2}{\partial t^2} + A^2.
$$
The operator $|A|^{-1}$ as given by spectral calculus  has domain 
$\ker[A]^\perp$ so that the operator $|A|^{-1} (1-P_0)$ is well defined and bounded on $L^2(\Sigma;S M)$. 
Here $P_0=P_{\{0\}}$ is the orthogonal projection onto the kernel of $A$. 
Then the family of operators
$$
 g(t) := \frac{1}{2 \rmi}  \left( e^{\rmi |A| t} \chi_{(0,\infty)}(t) +
 e^{-\rmi |A| t}  \chi_{(-\infty,0)}(t) \right)  |A|^{-1}  (1-P_0) + t \, \chi_{(0,\infty)}(t) P_0
$$
defines a Feynman parametrix via
$$
 (G u)(t,\cdot) = \int_\R g(t-s) u(s,\cdot) ds .
$$
For any open interval $I$ the restriction of $G$ to $I \times \Sigma$ then defines a Feynman parametrix
on the globally hyperbolic manifold $I \times \Sigma$.

Assume $s>1/2$.
For $M \subset X$ of the form $[t_1,t_2] \times \Sigma$ we have for any $u \in H^s(M;SM)$ the following formulae, easily obtained by integration by parts:
\begin{gather} \label{remainderformula}
 \left( \frac{\partial}{\partial t} + \mathrm{i}\; A \right) G \left( \frac{\partial}{\partial t} - \mathrm{i}\; A \right) u
 = u + R u,
\end{gather}
and
$$
  \left(G \left( \frac{\partial}{\partial t} \pm \mathrm{i}\; A \right) -  
 \left( \frac{\partial}{\partial t} \pm \mathrm{i}\; A \right) G \right) u =
 Q  u.
$$
Here the map
$R: H^s(M;SM) \to H^{s-1/2}(M;SM)$ is defined as
$$
 (R u)(t,\cdot):=  e^{\rmi |A| (t-t_1)} P_{(0,\infty)}(t_1) (u|_{\Sigma_{t_1}}) + e^{-\rmi |A| (t-t_2)} P_{(-\infty,0)}(t_2)  (u|_{\Sigma_{t_2}}) - P_0 (u|_{\Sigma_{t_1}})
$$
The operator $Q: H^s(M;SM) \to H^{s+1/2}(M;SM)$ is given by
$$
 (Q u)(t,\cdot):=\frac{1}{2 \rmi} |A|^{-1} (1-P_0)\left( e^{\rmi |A|(t-t_1)} (u|_{\Sigma_{t_1}}) - 
 e^{-\rmi |A| (t-t_2)} (u|_{\Sigma_{t_2}}) \right) + (t-t_1) P_0 (u|_{\Sigma_{t_1}}).
$$
We define the \emph{Feynman propagator} as $\mathcal{S}:=  \D G$.
One computes directly for $u \in H^s(M;S^-M)$
$$
 (\mathcal{S} u)(t,\cdot) = \left( \left( \frac{\partial}{\partial t} + \mathrm{i}\; A \right) G \beta u \right)(t,\cdot) = \int_\R h(t-s) \beta u(s,\cdot) ds\;,
$$
where
$$
 h(t) =  P_{(0,\infty)}(t) e^{\rmi |A| t} \chi_{(0,\infty)}(t) - P_{(-\infty,0)}(t) e^{-\rmi |A| t} \chi_{(-\infty,0)}(t)   +  \chi_{(0,\infty)}(t) P_0\;.
$$
Thus the Feynman propagator propagates elements in the positive spectral subspace of $A$ forward in time and elements in the negative 
spectral subspace of $A$ backwards in time.

\begin{theorem} \label{feynprop1}
Let $M=[t_1,t_2] \times \Sigma$ be equipped with a product metric $-dt^2+g$ and let $\mathcal{S}=  \D G$ be the Feynman propagator.
Let $s>1/2$.
Then we have
\begin{align*}
 D \mathcal{S}|_{H^s(M;S^-M)} & =  \mathrm{id}_{H^s(M;S^-M)},\\
 \mathcal{S} D|_{H^s(M;S^+M)} & = \mathrm{id}_{H^s(M;S^+M)}+ R.
\end{align*}
Moreover, $\mathcal{S}$ maps $H^s(M;S^-M)$ to sections satisfying APS boundary conditions at $t_1$, i.e.\
if $u \in H^s(M;S^-M)$ then $P_{[0,\infty)}(t_1)((\mathcal{S}u)|_{\Sigma_{t_1}})= 0$.
We also have $$\mathcal{S} D|_{H^s_\APS(M;S^+M)} = \mathrm{id}_{H^s_\APS(M;S^+M)}.$$
\end{theorem}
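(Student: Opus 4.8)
The plan is to establish the four assertions in turn, using the explicit convolution kernels $g$ and $h$ together with the identities \eqref{remainderformula} and the commutator identity $G(\partial_t\pm\mathrm{i}A)-(\partial_t\pm\mathrm{i}A)G=Q$ recorded above.

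For $\D\mathcal{S}=\mathrm{id}$, I would note that $\mathcal{S}=\D G$ and that $\D^{2}$ is the direct sum of $\tilde D D$ and $D\tilde D$, which in the product case both equal $L=\partial_t^{2}+A^{2}$; hence $\D\mathcal{S}=\D^{2}G=LG$, with $L$ now genuinely applied as a differential operator to $Gu$, so that no integration by parts and hence no boundary contribution occurs. It then suffices to check that in the product case $g$ is an honest operator-valued fundamental solution of $L$ in the $t$-variable, i.e.\ $Lg=\delta(t)\otimes\mathrm{id}$: on the orthogonal complement of $\ker A$ the oscillatory part $-\tfrac{1}{2\mathrm{i}}|A|^{-1}e^{-\mathrm{i}|A|\,|t|}$ is continuous at $t=0$ with a unit jump in its first derivative, while on $\ker A$ the term $t\,\chi_{(0,\infty)}(t)P_{0}$ is a fundamental solution of $\partial_t^{2}$; consequently $LGu=u$ on the interior of $M$ for every $u\in H^{s}(M;SM)$.

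For $\mathcal{S}\D=\mathrm{id}+R$, I would write $\mathcal{S}\D=\D G\D=\D^{2}G+\D\big(G\D-\D G\big)$. The first summand is $LG=\mathrm{id}$ by the previous step; the second is $\D$ applied to the commutator of $G$ and $\D$, which by the commutator identity above — together with the elementary relations $\beta G=G\beta$, $\beta(\partial_t\mp\mathrm{i}A)=(\partial_t\pm\mathrm{i}A)\beta$ and $\beta^{2}=\mathrm{id}$ handling the chirality bookkeeping — reduces to $\D$ applied to a $Q$-type boundary operator. Inserting the explicit formula for $Q$ and simplifying by means of the spectral identities $\tfrac12(\mathrm{id}\pm A|A|^{-1})=P_{(0,\infty)}$ resp.\ $P_{(-\infty,0)}$ on the orthogonal complement of $\ker A$ should identify this term with the operator $R$ displayed above; equivalently, feeding $(\partial_t+\mathrm{i}A)\beta u$ into \eqref{remainderformula} yields $\mathcal{S}\D u=u+Ru$ directly. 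Pinning down exactly which spectral projections ($P_{(0,\infty)}(t_{1})$, $P_{(-\infty,0)}(t_{2})$, $P_{0}$) and which exponentials $e^{\mp\mathrm{i}|A|(t-t_{j})}$ occur in $R$ is the one genuinely computational point, and is where I expect the main effort to lie.

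For the last two assertions, I would compute $(\mathcal{S}u)|_{\Sigma_{t_{1}}}$ for $u\in H^{s}(M;S^{-}M)$ directly from $(\mathcal{S}u)(t,\cdot)=\beta\int_{\R}h(t-s)u(s,\cdot)\,ds$: since $u$ is supported in $[t_{1},t_{2}]$, only arguments $t_{1}-s\le 0$ occur at $t=t_{1}$, so only the part of $h$ supported in $(-\infty,0)$ contributes, and that part carries a spectral projection of $A_{t_{1}}$ onto a half-line; composition with $\beta$, which anticommutes with $A_{t_{1}}$, then gives $P_{[0,\infty)}(t_{1})\big((\mathcal{S}u)|_{\Sigma_{t_{1}}}\big)=0$. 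Finally, for $u\in H^{s}_{\APS}(M;S^{+}M)$ the APS conditions give $u|_{\Sigma_{t_{1}}}\in\operatorname{ran}P_{(-\infty,0)}(t_{1})$, so $P_{(0,\infty)}(t_{1})(u|_{\Sigma_{t_{1}}})=0=P_{0}(u|_{\Sigma_{t_{1}}})$, and $u|_{\Sigma_{t_{2}}}\in\operatorname{ran}P_{(0,\infty)}(t_{2})$, so $P_{(-\infty,0)}(t_{2})(u|_{\Sigma_{t_{2}}})=0$; inspecting the formula for $R$, all three of its terms then vanish, whence $\mathcal{S}\D u=u$, so that $\mathcal{S}\D$ restricts to the identity on $H^{s}_{\APS}(M;S^{+}M)$.
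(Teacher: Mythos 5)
Your proposal is correct and follows essentially the same route as the paper: $\D\mathcal{S}=\mathrm{id}$ by checking that $g$ is a fundamental solution of $\partial_t^2+A^2$ applied under the integral, $\mathcal{S}\D=\mathrm{id}+R$ by feeding $\D u$ into \eqref{remainderformula} (with the chirality bookkeeping $\beta G=G\beta$, $\beta(\partial_t\mp\rmi A)=(\partial_t\pm\rmi A)\beta$ that you flag), the boundary statement at $t_1$ from the form of $h(t)$ for $t<0$, and the final identity because each term of $R$ is annihilated by the APS conditions on $S^+M$ — exactly the paper's argument. The one computation you defer, namely which spectral projections and exponentials occur in $R$, is precisely the content of the already displayed identity \eqref{remainderformula}, so your level of detail matches the paper's own proof.
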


\begin{proof}
That $\D \mathcal{S}|_{H^s(M;SM)} = \mathrm{id}_{H^s(M;SM)}$ can be checked by direct computation and 
$ \mathcal{S} \D|_{H^s(M;S^+M)}  = \mathrm{id}_{H^s(M;S^+M)}+ R$ follows immediately from \eqref{remainderformula}.
Since $h(t)=- P_{(-\infty,0)}(t) e^{-\rmi |A| t}$ for $t<0$ we obtain  $P_{[0,\infty)}(t_1)((\mathcal{S}u)|_{\Sigma_{t_1}})= 0$. If 
$u \in H^s(M;S^-M)$ then $\mathcal{S}u \in H^s(M;S^+M)$ and this means it satisfies APS-boundary conditions.
The formula $\mathcal{S} D|_{H^s_{\APS}(M;S^+M)} = \mathrm{id}_{H^s_{\APS}(M;S^+M)}$ follows from the fact that
the operator $R$  vanishes on $H^s_\APS(M;S^+M)$.
\end{proof}

\subsection{Gluing of Feynman parametrices} Let $(X,g)$ be a globally hyperbolic manifold
diffeomorphic to $(-T,T) \times \Sigma$ such that $\{ t\} \times \Sigma$
is a Cauchy hypersurface for each $t \in (-T,T)$. Let $0 < \delta < T$. The sets $U_1 = (-T,\delta) \times \Sigma$, $U_2=(-\delta,T) \times \Sigma$
and $U=U_1 \cap U_2 = (-\delta,\delta) \times \Sigma$. Let $L$ be a Dirac type operator or a normally hyperbolic operator.
Now suppose that $G_1$ is a Feynman parametrix on $U_1$ and $G_2$ a Feynman parametrix on $U_2$.
We choose the following gluing functions 
$\theta_1, \theta_2, \phi_1,\phi_2 \in C^\infty(X)$ satisfying the following properties
\begin{itemize}
 \item $\supp(\theta_1) \subset \supp(\phi_1) \subset U_1$,
 \item $\supp(\theta_2) \subset \supp(\phi_2) \subset  U_2$,
 \item $\theta_1 + \theta_2 = 1$,
 \item $\phi_1(x) = 1$ for all $x$ in some open neighborhood of $\supp(\theta_1)$,
 \item $\phi_2(x) = 1$ for all $x$ in some open neighborhood of $\supp(\theta_2)$.
\end{itemize}
This implies that the differential operator $[\phi_1,L]=\phi_1 L - L \phi_1$
is compactly supported in $X$, and for all $x \in \supp([L,\phi_1])$ we have $\theta_1(x)=0$,
$\theta_2(x)=1$, $\phi_2(x)=1$. The analogous statement holds for $[L,\phi_1]$.
\begin{lem}
 The operator
 $$
  G := \theta_1 G_1 \phi_1  +\theta_2 G_2 \phi_2 - \theta_1 G_1 [\phi_1, L] G_2 \phi_2 - \theta_2 G_2 [\phi_2, L] G_1 \phi_1.  
 $$
 is a Feynman parametrix.
\end{lem}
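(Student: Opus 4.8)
The plan is to verify the two properties defining a Feynman parametrix for $G$: that $G$ is a left parametrix, $GL=\mathrm{id}+R$ with $R$ smoothing, and that $\mathrm{WF}'(G)\subset\Lambda_F$. I will use that each $G_i$ is a Feynman parametrix on $U_i$, so $G_iL=\mathrm{id}+R_i$ with $R_i$ smoothing and $\mathrm{WF}'(G_i)\subset\Lambda_F$ (the geodesic flow on $U_i$ being the restriction of the one on $X$). I also use that the gluing functions may be taken to depend only on the Cauchy time function $t$. Writing $K_i:=\supp[\phi_i,L]$ (compact and contained in $U_1\cap U_2$), this gives: $K_1\cap K_2=\emptyset$ and $K_i\cap\supp\theta_i=\emptyset$; since $K_i\subset\supp\theta_j$ for $i\neq j$ and $\phi_j\equiv1$ near $\supp\theta_j$, also $\phi_j\equiv1$ on an open neighborhood of $K_i$; and, causally, every point of $K_1$ has strictly larger $t$-value than every point of $K_2$ and of $\supp\theta_1$, while every point of $K_2$ has strictly smaller $t$-value than every point of $K_1$ and of $\supp\theta_2$.

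The inclusion $\mathrm{WF}'(G)\subset\Lambda_F$ follows from the composition calculus for wavefront sets (the relevant supports being proper thanks to the compactly supported cutoffs and global hyperbolicity): the wavefront relation of each of the four summands of $G$ lies in a composition of relations each of which is either $\mathrm{WF}'(G_i)\subset\Lambda_F$ or $\mathrm{WF}'([\phi_i,L])\subset\Delta^*$, possibly cut down by multiplying with smooth functions. Multiplication by smooth functions does not enlarge wavefront sets, and $\Lambda_F$ — being $\Delta^*$ together with the forward lightlike flow-out — is stable under composition, since composing with $\Delta^*$ reproduces a relation and $\Phi_\sigma\circ\Phi_\tau=\Phi_{\sigma+\tau}$ sends $\sigma,\tau>0$ to $\sigma+\tau>0$. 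Hence $\mathrm{WF}'(G)\subset\Lambda_F$.

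For the parametrix property I would expand $GL$, insert $\phi_iL=L\phi_i+[\phi_i,L]$, replace $G_iL$ by $\mathrm{id}+R_i$, and simplify using $\theta_i\phi_i=\theta_i$, $[\phi_1,L]\phi_2=[\phi_1,L]$ (valid because $\phi_2\equiv1$ near $K_1$) and the mirror identity. The single-commutator contributions $\pm\theta_iG_i[\phi_i,L]$ then cancel and one is left with
\begin{multline*}
GL=\mathrm{id}+\theta_1R_1\phi_1+\theta_2R_2\phi_2\\
-\theta_1G_1[\phi_1,L]R_2\phi_2-\theta_2G_2[\phi_2,L]R_1\phi_1\\
-\theta_1G_1[\phi_1,L]G_2[\phi_2,L]-\theta_2G_2[\phi_2,L]G_1[\phi_1,L].
\end{multline*}
The terms $\theta_iR_i\phi_i$ are smoothing since $R_i$ is; the terms $\theta_1G_1[\phi_1,L]R_2\phi_2$ and $\theta_2G_2[\phi_2,L]R_1\phi_1$ are smoothing because $[\phi_1,L]R_2$, resp.\ $[\phi_2,L]R_1$, is smoothing (a differential operator applied to a smooth kernel), and composing on the left with $G_1$, resp.\ $G_2$, leaves the wavefront relation empty.

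The crux is that the two double-commutator terms are smoothing, and this is where the causal geometry is essential. For $\theta_1G_1[\phi_1,L]G_2[\phi_2,L]$: a point in its wavefront relation would, by the composition calculus together with the disjointness of the supports (so that only the flow-out parts of $\mathrm{WF}'(G_1)$, $\mathrm{WF}'(G_2)$ can occur), produce a lightlike geodesic $\gamma$ and parameters $0<\tau<\tau+\sigma$ with $\gamma(0)\in K_2$, $\gamma(\tau)\in K_1$, $\gamma(\tau+\sigma)\in\supp\theta_1$; but $t$ is strictly monotone along a lightlike geodesic in the globally hyperbolic $X$, so $t(\gamma(\tau))$ would have to lie strictly between $t(\gamma(0))$ and $t(\gamma(\tau+\sigma))$, contradicting that $t$ on $K_1$ exceeds $t$ on both $K_2$ and $\supp\theta_1$. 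Symmetrically, $\theta_2G_2[\phi_2,L]G_1[\phi_1,L]$ would force $\gamma(0)\in K_1$, $\gamma(\tau)\in K_2$, $\gamma(\tau+\sigma)\in\supp\theta_2$ with $t(\gamma(\tau))$ strictly smaller than both $t(\gamma(0))$ and $t(\gamma(\tau+\sigma))$ yet lying between them — again impossible. Hence $R$ is smoothing, and with $\mathrm{WF}'(G)\subset\Lambda_F$ this shows $G$ is a Feynman parametrix. I expect this last step to be the main obstacle, since the bare disjointness $K_1\cap K_2=\emptyset$ does not suffice — one genuinely needs the time-ordering of the supports, i.e.\ the $t$-dependence of the gluing functions.
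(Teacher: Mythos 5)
Your proof is correct and follows essentially the same route as the paper: expand $GL$ using $[\phi_i,L]$, observe that the single-commutator terms cancel, and show the surviving double-commutator remainders are smoothing because a lightlike geodesic cannot pass through $\supp[\phi_2,L]$, then $\supp[\phi_1,L]$, then $\supp\theta_1$ (the paper phrases this as ``the future of $\supp[\phi_1,L]$ misses $\supp\theta_1$'' and tracks the covector orientation in $V_+$, which is the same causal argument as your $t$-monotonicity), together with $\Lambda_F\circ\Delta^*\circ\Lambda_F\subset\Lambda_F$ for the wavefront containment. Your explicit assumption that the gluing functions depend only on $t$ simply makes precise the causal ordering of the supports that the paper uses implicitly; otherwise the two arguments coincide.
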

\begin{proof}
 It is easy to see that $G$ is well defined.
 We first check that $\mathrm{WF}'(G) \subset \Lambda_F$. 
 Since $G_1$ and $G_2$ are Feynman parametrices both $\mathrm{WF}'(\theta_1 G_1 \phi_1 )$
 and $\mathrm{WF}'(\theta_2 G_2 \phi_2)$ are subsets of $\Lambda_F$.
 Moreover, by the mapping properties of the canonical relation  $\Lambda_F$, we have
 $$
  \mathrm{WF}'(\theta_2 G_2 [\phi_2, L] G_1 \phi_1) \subset \{(x,\xi;x',\xi') \in \Lambda_F \mid \xi \in V_+, x \in U_2, x' \in U_1\},
 $$
 where $V_+ \subset T^*M$ is the closed forward light cone, i.e.\ the set of future-directed causal covectors.
 We have used here that the past of $\supp([\phi_2, L])$ has empty intersection with $\supp(\theta_2)$.
 Similarly, using that the future of $\supp([\phi_1, L])$ has empty intersection with $\supp(\theta_1)$, one obtains
 $$
  \mathrm{WF}'(\theta_1 G_1 [\phi_1, L] G_2 \phi_2) \subset \{(x,\xi;x',\xi') \in \Lambda_F \mid -\xi \in V_+, x \in U_1, x' \in U_2\}.
 $$
 This shows that $\mathrm{WF}'(G) \subset \Lambda_F$.
 Now we check that $G$ is a left parametrix. 
 Let us introduce the relation $A \sim B$ if $A-B$ is a smoothing operator.
 Note that
 $$
  \theta_1 G_1 \phi_1 L  \sim \theta_1 \phi_1 + \theta_1 G_1 [\phi_1,L]
 $$
 and
 $$
  \theta_1 G_1 [\phi_1, L] G_2 \phi_2 L \sim \theta_1 G_1 [\phi_1, L] G_2 [\phi_2, L] + \theta_1 G_1 [\phi_1,L].
 $$
 Collecting these two terms we obtain 
 $$
 \theta_1 G_1 \phi_1 L - \theta_1 G_1 [\phi_1, L] G_2 \phi_2 L \sim \theta_1 \phi_1 + R_1,
 $$
 where  $R_1 = -\theta_1 G_1 Q_1$, and $Q_1=[\phi_1, L] G_2 [\phi_2, L]$.
 Since $\mathrm{WF}'(G_2) \subset \Lambda_F$ we have 
 $$
  \mathrm{WF}'(Q_1) \subset \{(x,\xi;x',\xi') \in \Lambda_F \mid x \in \supp([\phi_1, L]), x' \in  \supp([\phi_2, L]), \xi \in V_+\}.
 $$
 Again, the future of $\supp([\phi_1, L])$ does not intersect $\supp(\theta_1)$ and hence
 we obtain $\mathrm{WF}'(R_1) = \emptyset$. Thus, $R_1$
 is a smoothing operator.
 In the same way one shows that
 $$
 \theta_2 G_2 \phi_2 L = \theta_2 \phi_2 + R_2,
 $$
 where $R_2$ is a smoothing operator.
 All together we obtain
 $$
  G L = \theta_1 \phi_1 + \theta_2 \phi_2 + R_1 + R_2 = \mathrm{id}_{C^\infty_0} + R, 
 $$
 where $R=R_1 + R_2$ is a smoothing operator, and hence $G$ is a left parametrix.
\end{proof}

\subsection{A distinguished Feynman parametrix if \boldmath$M$ has product structure near the boundary}

In this subsection we will be assuming that 
$M$ is isometric to a Lorentzian cylinder near the boundaries $\Sigma_{t_1}$ and $\Sigma_{t_2}$. This means near these boundaries
the metric is of the form
$-dt^2 + g$ where $g$ is a Riemannian metric on $\Sigma$ independent of $t$. More precisely, we assume
product structure on $(t_1,t_1 + \epsilon) \times \Sigma$ and $(t_2,t_2-\epsilon) \times \Sigma$. Thus,
we can think of $M$ as the union of the two product manifolds $[t_1,t_1 + \epsilon] \times \Sigma_1$,  
$[t_2,t_2-\epsilon] \times \Sigma_2$ and the open globally hyperbolic manifold $X=(t_1,t_2) \times \Sigma$.
Starting with any Feynman parametrix for the operator $\D$ on $X$ we can use the Feynman parametrices
on $[t_1,t_1 + \epsilon] \times \Sigma_1$, $[t_2,t_2 - \epsilon] \times \Sigma_2$ to obtain a Feynman parametrix
$\mathcal{S}$ by gluing. The following theorem is immediately deduced from the support properties of the gluing functions and Theorem~\ref{feynprop1}.

\begin{theorem}\label{modcomp1}
 Assume  that the metric on $M$ has product structure near $\Sigma_1$ and $\Sigma_2$. Let $s>1/2$.
 Then the operator $ \mathcal{S}$ continuously maps $H^s(M;S^- M))$ to $H^s_\APS(M;S^+ M)$ and we have
 \begin{gather*}
  D \mathcal{S}|_{H^s(M;S^-M)}  = \mathrm{id}_{H^s(M;S^-M)} + \mathcal{Q},\\
  \mathcal{S} D |_{H^s_\APS(M;S^+M)}  = \mathrm{id}_{H^s_\APS(M;S^+M)} + \mathcal{R}
 \end{gather*}
 where $\mathcal{Q}$ and $\mathcal{R}$ have integral kernels that are smooth up to $\partial M$. 
 Moreover, the integral kernel of $Q$ is compactly supported
 in $(M \backslash \partial M) \times M$, and the integral kernel of $\mathcal{R}$ satisfies APS boundary conditions in the first variable.
 Therefore, for  $\mathcal{Q}$ and $\mathcal{R}$ have the following mapping properties:
  \begin{align*}
   \mathcal{Q} : H^r(M; S^-M) &\to C_0^\infty(M \backslash \partial M;S^-M),\\
  \mathcal{R} : H^r(M;S^+M) &\to C^\infty_\APS(M;S^+M),
 \end{align*}
 for any $r \in \R$.\qed
\end{theorem}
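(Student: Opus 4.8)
The plan is to produce $\mathcal{S}$ explicitly as a glued Feynman parametrix and then to read off every assertion from the product model near $\partial M$. First I would decompose $M$ into the two product collars near $\Sigma_{t_1}$ and $\Sigma_{t_2}$, say $U_1=[t_1,t_1+\ep)\times\Sigma$ and $U_2=(t_2-\ep,t_2]\times\Sigma$ on which the metric is $-dt^2+g$, together with the open globally hyperbolic manifold $X=(t_1,t_2)\times\Sigma$. On $X$ one chooses any Feynman parametrix for $\D$ (these exist and are unique modulo smoothing operators, as recalled in this section); on $U_1$ one takes the product Feynman propagator $\mathcal{S}_1=\D G_1$ of the preceding subsections; and on $U_2$ one takes the product Feynman propagator adapted to the \emph{future} boundary $\Sigma_{t_2}$, i.e.\ the operator furnished by Theorem~\ref{feynprop1} after reversing the time orientation on that collar (this is why $U_2$ is written with decreasing endpoints). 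Iterating the gluing lemma of the previous subsection over the three pieces, with cut-off functions $\theta_a,\phi_a$ chosen so that $\theta_a\equiv\phi_a\equiv1$ on a full neighbourhood of $\Sigma_{t_a}$ inside $U_a$ for $a=1,2$ and so that the commutator supports $\supp([\phi_a,\D])$ and the interior correction term all lie in $M\backslash\partial M$, one obtains a Feynman parametrix $\mathcal{S}$ for $\D$ on $M$. Passing to chirality blocks, $\mathcal{S}$ restricts to a continuous operator $H^s(M;S^-M)\to H^s(M;S^+M)$ with $\D\mathcal{S}=\mathrm{id}+(\text{smoothing})$ and $\mathcal{S}\D=\mathrm{id}+(\text{smoothing})$, the continuity and remainder statements being the standard mapping properties of Feynman parametrices.

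Next I would analyse the behaviour near the boundary. By the iterated gluing formula every term of $\mathcal{S}$ carries a leading factor $\theta_a$ and a leading operator $\mathcal{S}_a$; for $x$ in a neighbourhood of $\Sigma_{t_i}$ only the terms with $a=i$ survive (the others have $\theta_a(x)=0$), and there $\theta_i\equiv1$, so $(\mathcal{S}u)|_{\Sigma_{t_i}}$ is a finite sum of restrictions of the form $\mathcal{S}_i(\,\cdots\,)|_{\Sigma_{t_i}}$. Since $\mathcal{S}_1$ (resp.\ $\mathcal{S}_2$) maps \emph{all} of $H^s$ into sections satisfying the APS condition at $\Sigma_{t_1}$ (resp.\ at $\Sigma_{t_2}$) by Theorem~\ref{feynprop1} (resp.\ its time-reversed counterpart), it follows that $\mathcal{S}u$ satisfies the APS condition at both boundary components; together with the $H^s$-regularity this gives $\mathcal{S}u\in H^s_{\APS}(M;S^+M)$ and the continuity of $H^s(M;S^-M)\to H^s_{\APS}(M;S^+M)$. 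For the remainders, recall that near $\Sigma_{t_i}$ one has $D\mathcal{S}_i=\mathrm{id}$ \emph{exactly} (Theorem~\ref{feynprop1}); feeding this into the gluing formula and using that $[\,\cdot\,,\phi_i]$ and $[\,\cdot\,,\theta_i]$ are supported away from $\Sigma_{t_i}$, one gets $D\mathcal{S}u=\theta_i\phi_i u=u$ on a neighbourhood of $\Sigma_{t_i}$, so $\mathcal{Q}:=D\mathcal{S}-\mathrm{id}$ \emph{vanishes} near $\partial M$. Likewise, the explicit product remainder $R_i$ of Theorem~\ref{feynprop1} vanishes on sections obeying the APS conditions, so on $H^s_{\APS}(M;S^+M)$ the operator $\mathcal{R}:=\mathcal{S}D|_{H^s_{\APS}}-\mathrm{id}$ is, near $\Sigma_{t_i}$, a sum of terms of the form $\theta_i\mathcal{S}_i(\,\cdots\,)$; hence its boundary values at $\Sigma_{t_i}$ again lie in the APS subspaces.

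It then remains to treat $\mathcal{Q}$ and $\mathcal{R}$ in the interior. There $\mathcal{S}$ agrees, outside the cut-off regions, with the chosen interior Feynman parametrix, and $\mathcal{Q},\mathcal{R}$ are assembled from the smoothing remainder of that parametrix and from the commutator terms $[\phi_a,\D]G_b$ produced by gluing, which are smoothing as in the proof of the gluing lemma; combined with $\mathcal{Q}\equiv0$ near $\partial M$ this shows that the kernel of $\mathcal{Q}$ is smooth and compactly supported in the first variable inside $M\backslash\partial M$, while the kernel of $\mathcal{R}$ is smooth up to $\partial M$ with first-variable boundary values lying in the APS subspaces. A smoothing operator with a kernel of this type extends continuously from $H^r(M;S^-M)$, resp.\ $H^r(M;S^+M)$, for every $r\in\R$, with range in $C^\infty_0(M\backslash\partial M;S^-M)$, resp.\ $C^\infty_{\APS}(M;S^+M)$, which is precisely the asserted mapping property; putting the boundary and interior analyses together yields all three displayed statements.

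The one step that genuinely needs care is the boundary bookkeeping in the gluing: one must set up the cut-offs $\theta_a,\phi_a$ so that near each $\Sigma_{t_i}$ the glued operator is modelled \emph{cleanly} on the product propagator $\mathcal{S}_i$ (every surviving contribution having the shape $\theta_i\mathcal{S}_i(\,\cdots\,)$ with $\theta_i\equiv1$, and every correction pushed into $M\backslash\partial M$), and then one invokes the sharp product-case statement — Theorem~\ref{feynprop1} at $\Sigma_{t_1}$ and its time reversal at $\Sigma_{t_2}$ — both for the APS membership of the range and for the precise form of the remainders near $\partial M$. Everything else is the soft calculus of Feynman parametrices already developed above.
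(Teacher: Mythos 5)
Your construction --- gluing the product Feynman propagators on the two collars with an arbitrary interior Feynman parametrix and reading off the APS membership of the range, the vanishing of $\mathcal{Q}$ near $\partial M$, and the boundary behaviour of $\mathcal{R}$ from the support properties of the cut-off functions together with Theorem~\ref{feynprop1} --- is exactly the paper's argument, which the authors compress into the single sentence preceding the statement. Your extra care at the future collar (using the future-adapted, time-reversed product propagator to get the $\Sigma_{t_2}$-analogue of Theorem~\ref{feynprop1}, noting that this operator differs from the original product propagator only by finite-rank terms with smooth kernel coming from $\ker A$ and hence is still a Feynman parametrix to which the gluing lemma applies) is a legitimate way to supply a detail the paper leaves implicit, so the proposal is correct and follows the same route.
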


The above can be understood as a refined version of the Fredholm property of the operator $D$ subject to APS-boundary conditions and it gives an alternative approach to the index problem via parametrices. 
An example is the following refinement of Lemma~\ref{FIOlemma} in the case of product structure of the metric near the boundary.

\begin{theorem}\label{smoothy1}
 Assume that the metric on $M$ has product structure near $\Sigma_{t_1}$ and $\Sigma_{t_2}$.
Then the operators $\Qpm \circ P_{(-\infty,0)}(t_1)$ and $\Qmp \circ P_{(0,\infty)}(t_1)$ have smooth integral kernels.
\end{theorem}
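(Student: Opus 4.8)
The plan is to deduce this from a propagation-of-singularities analysis of the Cauchy evolution; the hypothesis that the metric is a product near both $\Sigma_{t_1}$ and $\Sigma_{t_2}$ is precisely what makes the analysis sharp enough to yield smoothing rather than the one-step gain of Lemma~\ref{FIOlemma}. First I would fix $f\in H^s(\Sigma_{t_1};S^+M|_{\Sigma_{t_1}})$, put $u_0:=P_{(-\infty,0)}(t_1)f$ and let $u:=Q(\cdot,t_1)u_0\in\FE^s(M;\ker D)$ be the solution of the Dirac equation with this initial datum (Corollary~\ref{cor:HomoCauchyProblem}). By the block decomposition \eqref{eq:QMatrix}, $\Qpm\circ P_{(-\infty,0)}(t_1)f=P_{(0,\infty)}(t_2)(u|_{\Sigma_{t_2}})$, so it suffices to show that the positive spectral part of $u|_{\Sigma_{t_2}}$ is smooth for every $f$. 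The operator $\Qmp\circ P_{(0,\infty)}(t_1)$ is treated identically, starting from a positive-spectral datum and examining the non-positive spectral part at $\Sigma_{t_2}$, so I would only write out the case of $\Qpm$.

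The next step is to control $\mathrm{WF}(u)$. Over a product collar of $\Sigma_{t_1}$ one has $D=\beta(\partial_t-\rmi A)$, hence $u(t)=e^{\rmi(t-t_1)A}u_0$; since $A$ commutes with its spectral projections and $u_0$ is negative-spectral, $u(t)=e^{-\rmi(t-t_1)|A|}u_0$ stays negative-spectral there and is annihilated by the pseudodifferential operator $\partial_t+\rmi|A|$, whose characteristic set is one of the two connected components of the bundle of nonzero null covectors; call it $N_-$ and the other one $N_+$. Elliptic regularity then gives $\mathrm{WF}(u)\subset N_-$ over that collar. This is exactly where the product assumption is used: without it, the zeroth-order mean-curvature term in \eqref{eq:DiracSurface+} couples the two components and produces an $N_+$-contribution one order lower, which is the loss recorded in Lemma~\ref{FIOlemma}. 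I would then apply propagation of singularities for $D$ (its square being normally hyperbolic): in the globally hyperbolic $M=[t_1,t_2]\times\Sigma$ every null bicharacteristic runs forward in $t$, so each one meeting $M$ enters through $\Sigma_{t_1}$, the flow preserves each of $N_+,N_-$, and $\mathrm{WF}(u)$ is a union of such bicharacteristics; hence $\mathrm{WF}(u)\subset N_-$ on all of $M$.

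Near $\Sigma_{t_2}$ the metric is again a product, so I would decompose the Cauchy data $v:=u|_{\Sigma_{t_2}}=v^++v^-+v^0$ into positive, negative and zero $A_{t_2}$-spectral parts and set $u^\pm(t):=e^{\rmi(t-t_2)A}v^\pm$, $u^0:=v^0$, so that $u=u^++u^-+u^0$ on the collar. The product structure decouples the equation, so $u^+$ is annihilated by $\partial_t-\rmi|A|$ and $u^-$ by $\partial_t+\rmi|A|$, whence $\mathrm{WF}(u^\pm)\subset N_\pm$, while $u^0$ is smooth since $\ker A_{t_2}$ is finite dimensional with smooth elements. But then $\mathrm{WF}(u^+)\subset\mathrm{WF}(u)\cup\mathrm{WF}(u^-)\cup\mathrm{WF}(u^0)\subset N_-$, and $N_+\cap N_-=\emptyset$, so $u^+$ is smooth and hence so is $P_{(0,\infty)}(t_2)(u|_{\Sigma_{t_2}})=v^+$. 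To upgrade ``$\Qpm\circ P_{(-\infty,0)}(t_1)$ maps into $C^\infty$'' to ``it has a smooth integral kernel'', I would invoke Lemma~\ref{FIOlemma}, which exhibits this operator as a sum of two Fourier integral operators of order $-1$ with canonical-graph relations; such an operator producing no singularities at all has vanishing symbol, hence a smooth kernel. Interchanging $N_+\leftrightarrow N_-$ and $P_{(0,\infty)}(t_2)\leftrightarrow P_{(-\infty,0]}(t_2)$ gives the statement for $\Qmp\circ P_{(0,\infty)}(t_1)$.

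The step I expect to be the main obstacle is the microlocal bookkeeping at the boundary: one must justify that restriction to the spacelike --- hence non-characteristic --- hypersurfaces $\Sigma_{t_j}$ is legitimate on wavefront sets (it is, because $\mathrm{WF}(u)\subset\mathrm{Char}(D)$ avoids the timelike conormals of the $\Sigma_{t_j}$), and one must phrase the half-wave statements microlocally, since $|A|$ is only a pseudodifferential operator away from $\ker A$. An alternative that bypasses propagation of singularities is to feed $u$ into the glued Feynman parametrix $\mathcal{S}$ of Theorem~\ref{modcomp1} together with the explicit boundary remainder $R$ of Theorem~\ref{feynprop1}: since $u$ satisfies APS boundary conditions at $\Sigma_{t_1}$ (the datum $u_0=P_{(-\infty,0)}(t_1)f$ being negative-spectral), the identity $\mathcal{S}Du=u+Ru$, valid near the boundary by the product structure there, localizes near $\Sigma_{t_2}$ to $u\equiv-e^{\rmi|A|(t-t_2)}P_{(-\infty,0)}(t_2)(u|_{\Sigma_{t_2}})$ modulo a smoothing operator, and applying $P_{(0,\infty)}(t_2)\circ\res_{t_2}$ yields the conclusion directly.
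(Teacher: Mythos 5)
Your argument is correct, but it follows a genuinely different route from the paper. The paper's proof stays entirely within the Feynman-parametrix framework of Section~\ref{sec:Feynman}: it extends the spacetime beyond $\Sigma_{t_2}$ by a product piece, cuts off the solution $\Phi$ with a function $\theta$ depending only on $t$ there, uses that $\Phi$ satisfies APS conditions at $\Sigma_{t_1}$ to get $\mathcal{S}\bigl(\beta(\partial_t\theta)\Phi\bigr)=\theta\Phi$ modulo smoothing, and then kills the negative-spectral contribution by the explicit propagation property of the Feynman propagator in the product region (after observing that $\beta$ flips the spectral subspaces of $A$), leaving only the finite-rank zero-mode term; this is essentially the ``alternative'' you sketch in your last paragraph, though the paper implements it by extension and cutoff rather than by localizing the identity of Theorem~\ref{feynprop1}. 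Your main argument instead uses only well-posedness, the explicit half-wave evolution $e^{\pm\rmi(t-t_j)|A|}$ on the two product collars, and H\"ormander propagation of singularities for the normally hyperbolic operator $\tilde D D$: negative-spectral data at $t_1$ have wavefront in one component $N_-$ of the light cone, the geodesic flow preserves the components, and the positive-spectral part at $t_2$ would have wavefront in $N_+$, forcing it to be smooth. This is more elementary and independent of the gluing construction, whereas the paper's route reuses machinery it needs anyway for Theorem~\ref{modcomp1}. One step of yours is thin: concluding a smooth kernel from ``the sum of the two order $-1$ graph-FIOs produces no singularities, hence has vanishing symbol'' ignores possible cancellation between the two pieces where their canonical relations meet. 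This is easily repaired without FIO calculus: your wavefront argument works verbatim for data in $H^s(\Sigma_{t_1})$ for every $s\in\R$, so $\Qpm\circ P_{(-\infty,0)}(t_1)$ maps $H^s\to C^\infty(\Sigma_{t_2})$ continuously (closed graph theorem) for all $s$, hence extends to a continuous map $\mathcal{D}'(\Sigma_{t_1})\to C^\infty(\Sigma_{t_2})$, and such an operator has a smooth Schwartz kernel; the same applies to $\Qmp\circ P_{(0,\infty)}(t_1)$.
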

\begin{proof}
 In the proof we will assume that the Feynman parametrix was constructed by gluing on  a slightly larger spacetime
 $\tilde M = [t_1,t_3] \times \Sigma$, where $M = [t_1,t_2] \times \Sigma$ and we assume the metric to be of product type on 
 $ Z=[t_2,t_3] \times \Sigma$. We will also assume that the gluing functions used in the construction have derivatives supported away from $Z$.
 We will need a smooth function $\theta$  on $\tilde M$ which is equal to one on $M$ and which vanishes near $t=t_3$. We choose $\theta$ such that
 on $Z$ it depends on the $t$-variable only (i.e. such that $\theta$ is constant on $\Sigma_t$).
  Now let $u \in P_{(-\infty,0)}(t_1) H^s(\Sigma_{t_1},S^+M|_{\Sigma_{t_1}})$. Then there exists a unique solution $\Phi \in \FE^s(\tilde M,S^+\tilde M)$ of the Cauchy problem
 $D \Phi=0,\; \res_{t_1}(\Phi) =u$. 
 The spinor $D(\theta \Phi) = \beta (\partial_t \theta) \Phi$ is compactly supported in $Z$.
 Since $\theta\Phi$ satisfies APS-boundary conditions on $\tilde M$ we have
 $$
  \mathcal{S} ( \beta (\partial_t \theta) \Phi ) = \theta \Phi + K \Phi,
 $$
 where $K$ is an operator with smooth integral kernel.
 This shows that $Q(t_2,t_1)$ equals to $(\res_{t_2} \circ \mathcal{S}) ( \beta (\partial_t \theta) \Phi ) + (\res_{t_2} \circ K)\Phi$.
 On $Z$ we decompose $\Phi(t,\cdot) = \Phi_+(t,\cdot) + \Phi_-(t,\cdot)+\Phi_0$ according to the spectral decomposition into positive, negative, and zero spectral subspaces of $A$.
 Hence  $\Qpm(u) = (\res_{t_2} \circ \mathcal{S})( \beta (\partial_t \theta) (\Phi_- + \Phi_0) ) + (\res_{t_2} \circ K)\Phi$. Since $\beta$ anticommutes with $A$ and $\partial_t \theta$
 depends only on $t$, the section $(\beta (\partial_t \theta) \Phi_-)(t,\cdot)$ is in the positive spectral subspace of $A$ for each $t$. By the propagation
 properties of $\mathcal{S}$ and the support properties of the gluing functions we obtain $(\res_{t_2} \circ \mathcal{S}) ( \beta (\partial_t \theta) \Phi_- )=0$
 and therefore $\Qpm u = (\res_{t_2} \circ \mathcal{S})( \beta (\partial_t \theta) \Phi_0) + (\res_{t_2} \circ K)\Phi$. The map 
 $$
  u \mapsto (\res_{t_2} \circ \mathcal{S})( \beta (\partial_t \theta) \Phi_0) 
 $$
 has smooth integral kernel because the kernel of $A$ is finite dimensional and consists of smooth sections.
 A similar argument works for $\Qmp \circ P_{(0,\infty)}(t_1)$.
\end{proof}


\section{Concluding remarks}
\label{sec:remarks}

To keep notation simple we have restricted our presentation to the classical Dirac operator acting on spinor fields.
The same proof actually yields the index theorem in a more general situation.
First, we can replace the spin structure by a spin$^c$ structure.
Then $M$ carries the associated determinant bundle $L\to M$ which is a Hermitian line bundle.
We assume that it is equipped with a metric connection $\nabla^L$.
Then the spinor bundles $S^\pm M$ and the spin$^c$-Dirac operators are defined.

In addition, we can twist the spin$^c$-Dirac operator with a Hermitian vector bundle $E\to M$ also carrying a metric connection $\nabla^E$.
The resulting twisted spin$^c$-Dirac operator then acts on spinors with coefficients in $E$, i.e., on sections of $S^{(\pm)}M\otimes E$.

\begin{theorem}\label{thm:maingeneral}
Let $(M,g)$ be a compact time-oriented globally hyperbolic Lorentzian manifold with boundary $\partial M = \Sigma_- \sqcup \Sigma_+$. 
Here $\Sigma_\pm$ are smooth spacelike Cauchy hypersurfaces, with $\Sigma_+$ lying in the future of $\Sigma_-$.
Assume that $M$ is even dimensional and comes equipped with a spin$^c$ structure.
Let $\nabla^L$ be a metric connection on the determinant line bundle, let $E\to M$ be a Hermitian vector bundle equipped with a metric connection $\nabla^E$.

Then the twisted spin$^c$-Dirac operator $D_\APS: \FE^0_\APS(M;S^+M\otimes E) \to  L^2(M;S^-M\otimes E)$ under Atiyah-Patodi-Singer boundary conditions is Fredholm and its index is given by
$$
\ind[D_\APS]
= 
\int_M \Adach (\nabla)\wedge e^{c_1(\nabla^L)/2}\wedge \ch(\nabla^E) + \int_{\partial M}\mathcal{T} - \frac{h(A_{-})+h(A_{+})+\eta(A_{-})-\eta(A_{+})}{2}\, .
$$ 
\end{theorem}

Here, $A_\pm$ are the induced Riemannian twisted spin$^c$-Dirac operators on $\Sigma_\pm$, $c_1(\nabla^L)$ is the Chern form of the curvature of $\nabla^L$ and $\ch(\nabla^E)$ is the Chern character form of the curvature of $\nabla^E$.
The transgression form $\mathcal{T}$ is defined as in \eqref{eq:defTransgressionform} for the given connections on $SM\otimes E$ and a connection induced by a metric with product structure near the boundary and connections $\hat\nabla^L$ and $\hat\nabla^E$ also having product structure near the boundary.
In particular, the boundary integral vanishes if the given metric and connections $\nabla^L$ and $\nabla^E$ have product structure near the boundary.

This more general version of the main theorem covers the physically relevant cases of hermitian bundles induced by compact gauge groups. 
For example, the spin$^c$ case allows one to include electromagnetic potentials.
Examples for the index of twisted Dirac operators on Lorentzian manifolds can be found in \cite[Sec.~3]{baerstroh2015chiral}.

In the special case when the spacetime has a metric of product type $-dt^2 + g$, with $g$ independent of $t$, one can consider the case of a time-dependent
connection and potential and use a scattering theory approach to study solutions of the Dirac equation. In this case an index theorem for the
scattering operator for the Dirac equation on positive spectral subspaces  is known and worked out in detail in \cite{bunke1992index}. In the case of Minkowski 
spacetime such index formulae were also obtained earlier in \cite{matsui1987} and \cite{matsui1990index}. The formulae there involve a Chern-Simons class and the $\Adach$-form
of the Cauchy surface rather than the spacetime. The $\eta$-invariant does not appear in this case.

A word of caution here:
If the twist bundle $E$ is a natural bundle induced by the geometry of $M$ itself, e.g.\ $E=TM$, then its induced metric is not definite in general.
If this happens then Theorem~\ref{thm:maingeneral} does not apply.
Indeed, the twisted spin$^c$-Dirac operator may no longer be a symmetric hyperbolic system in this case, the induced operators on the boundary may no longer be selfadjoint for a Hermitian metric and the whole analysis will require a careful reexamination.

As in elliptic index theory there are many directions in which a generalization should be possible.
It is conceivable that one could work out an equivariant version, a family version, a spatially $L^2$-version or other versions of our index theorem for noncompact Cauchy hypersurfaces.
The latter may be very useful on physical grounds.

For the Dirac operator on a compact Riemannian spin manifold with boundary one cannot use anti-Atiyah-Patodi-Singer boundary conditions instead of APS-boundary conditions.
This would not give rise to a Fredholm operator.
In the Lorentzian setting however anti-APS conditions are equally good.
This difference in qualitative behavior can be understood as follows:
Solving the Dirac equation on a Riemannian product manifold under APS-boundary conditions is like solving a heat equation which is possible forward in time but not backwards in time.
In the Lorentzian setting it corresponds to solving a wave equation which can be done both forward and backwards in time.

As a final remark we would like to note that APS boundary conditions make sense also for distributional sections of lower regularity. 
For example, for the restriction
of a distributional spinor $u$ to $\Sigma_\pm$ to be well defined it is sufficient to assume that its Sobolev wavefront set $\mathrm{WF}_s(u)$
is contained in the light cone for some $s>1/2$. This is automatically satisfied if $u$ solves the Dirac equation. Distributional solutions that satisfy APS boundary conditions can be shown to be smooth, and therefore allowing lower regularity results in the same theory.


\appendix 
\section{Solution of the Cauchy problem for normally hyperbolic operators}
\label{app:NormHyp}

Let $L$ be a normally hyperbolic operator on a globally hyperbolic spacetime $X$ acting on the sections of the vector bundle $E$.
Then there exists a unique connection $\nabla: C^\infty(X; E) \to C^\infty(X; E \otimes T^*X)$ and a potential $V \in C^\infty(X;\mathrm{End}(E))$
such that 
$$
 L = \Delta^{\nabla} + V,
$$
where $\Delta^{\nabla} = -\mathrm{Tr}_g (\nabla^{E \otimes T^* X} \circ \nabla)$ is the connection Laplacian, see \cite[Prop.~3.1]{zbMATH00955339}. 
Here $\nabla^{E \otimes T^* X}$ is the connection on $E \otimes T^* X$ induced by the connection on $E$ and the Levi-Civita connection on $T^*X$.

Note that $L$ is of real principal type and the subprincipal symbol $\mathrm{sub}(L)$ of $L$ can be expressed in local bundle-charts in terms of the connection one
form $A$ of $\nabla$ (see \cite[Prop.~3.1]{jakobson2007high} or Example 4.2 in \cite{LI2016204} for a detailed explanation) as $\mathrm{sub}(L)(\xi) = - 2 \rmi g(\xi, A)$.

Let $\Sigma \subset X$ be a Cauchy hypersurface and let $\nu$ be its past directed unit-normal vector field. 
Then it is well known that the Cauchy problem for $L$ is well posed and can be solved in the sense that there exist continuous operators $\G: C_0^\infty(\Sigma;E) \to C^\infty(X;E)$
and $\T: C_0^\infty(\Sigma;E) \to C^\infty(X;E)$ such that for any $g,f \in C_0^\infty(\Sigma;E)$ the section
$u = \G f + \T g$ is the unique solution of the initial value problem
\begin{align*}
 L u = 0,  \quad 
 u |_\Sigma = f, \quad (-\nabla_\nu u) |_\Sigma = g.
\end{align*}

The solution operators $\G, \T$ are well known to be Fourier integral operators. 
Unfortunately we were able to find the statement of this fact only for the case of scalar operators. 
Therefore, for the convenience of the reader, we revise the theory in the case of operators acting on vector bundles.

For two pseudo-Riemannian manifolds $Y,Z$ and vector bundles $F \to Z$ and $E \to Y$ suppose that $C \subset T^* Y \times T^*Z$ is a homogeneous canonical relation.
Let $C' = \{  (y,\xi,z,\eta)  \mid (y,\xi,z,-\eta) \in C \}$.
Denote by $I^m(Y \times Z,C;\mathrm{Hom}(F,E))$ the set of Fourier integral operators with canonical relation $C$.
These are operators
whose distributional Schwartz kernels take values in the bundle $E \boxtimes F^*$ and
can be represented locally by an oscillatory integral with a polyhomogeneous matrix valued symbol and a scalar phase 
function that locally parametrizes $C'$. We refer to \cite{hormander1971fourier} and \cite{Hormander:1985aaiv} for precise definitions. 
On $C$ we have the Keller-Maslov line bundle $M_C$. The principal symbol $\sigma_U$
of a Fourier integral operator $U \in I^m(Y \times Z,C;\mathrm{Hom}(F,E))$ is a section in the bundle
$$
 j_{C}^* \left( E \boxtimes F^* \right) \otimes \Omega_{\frac{1}{2}} \otimes M_C,
$$
where $j_C: C \to Y \times Z$ is the inclusion composed with the projection, and $\Omega_{\frac{1}{2}}$ is the bundle of half-densities on $C$.
A choice of phase function that parametrizes the Lagrangian manifold $C'$ gives a local trivialization of the Keller-Maslov line bundle.

\begin{theorem} \label{fioth}
 Let $L$ as above a normally hyperbolic operator acting on sections of a vector bundle $E$ over a globally hyperbolic spacetime.
 Let $\Sigma$ be a Cauchy hypersurface and let $\G$ and $\T$ be the operators solving the Cauchy problem.
 Then $\G$ and $\T$ are Fourier integral operators. More precisely,
 \begin{align*}
  \G \in I^{-\frac{1}{4}}(X \times \Sigma; C; \mathrm{Hom}(E,E))\\
  \T \in I^{-\frac{5}{4}}(X \times \Sigma; C; \mathrm{Hom}(E,E)),
 \end{align*}
 where $C$ is the following canonical relation
 $$
  C = \{ (x,\xi, y,\eta) \in T^*X \times T^*\Sigma \mid (x,\xi) \sim (y,\eta) \}.
 $$
 Here $(x,\xi) \sim (y,\eta)$ if and only if there exists a lightlike vector $\tilde \eta$ in $T_y^* X$ such that
 $(x,\xi)$ and $(y,\tilde \eta)$ are in the same orbit of the geodesic flow on $X$ and
 the pull-back of $(y,\tilde \eta)$ to $\Sigma$ coincides with $(y,\eta)$.
\end{theorem}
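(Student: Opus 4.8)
The plan is to reduce the statement to the well-documented scalar case. The crucial point is that a normally hyperbolic operator has \emph{scalar} principal symbol, $\sigma_L(x,\xi) = -\langle\xi,\xi\rangle\,\Id_E$, so that the Fourier integral operator calculus of H\"ormander and Duistermaat goes through verbatim with $\mathrm{End}(E)$-valued symbols; the only genuinely new input is that the leading transport equation along the null bicharacteristics is governed by the connection $\nabla$ in the decomposition $L=\Delta^{\nabla}+V$, so that the principal symbol involves geometric parallel transport $\Gamma$. First I would localize near $\Sigma$: using a Cauchy temporal function as in \eqref{eq:Split} one writes a neighbourhood of $\Sigma$ as $(-\ep,\ep)\times\Sigma$ with $\Sigma=\{0\}\times\Sigma$ and metric $-N^2dt^2+g_t$. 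Since the statement is microlocal and, by finite propagation speed and domain-of-dependence, $\T_0$ and $\T_1$ are determined modulo smoothing operators by $L$ near $\Sigma$, it suffices to construct a parametrix on this strip.

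On the strip I would build a geometric-optics (WKB) parametrix for the Cauchy problem (alternatively one may microlocally factor $L$ near $\Sigma$ into two first-order half-wave operators and assemble $\T_0,\T_1$ from the corresponding evolution families; this gives the same conclusion). The eikonal equation for the phase is the Hamilton--Jacobi equation of the metric Hamiltonian $\langle\xi,\xi\rangle$, whose flow is exactly the null geodesic flow; parametrizing the resulting Lagrangian in $\dot T^*X\times\dot T^*\Sigma$ reproduces precisely the relation $C$ of the statement, with the two sheets coming from the future- and past-directed lifts $\tilde\eta$ of a given $\eta\in\dot T^*_y\Sigma$. The amplitude is then expanded in a classical symbol $a\sim a_0+a_{-1}+\cdots$ solving a recursive system of transport equations along the bicharacteristics. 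Because the subprincipal symbol of $L$ coincides with the connection one-form of $\nabla$ (recalled before the theorem), the first of these is the parallel-transport ODE $\nabla_{\dot\gamma}a_0 = 0$ along the null geodesic $\gamma$, twisted only by the usual scalar half-density/Morse factor; this yields the asserted form of the principal symbol. Iterating solves the lower-order transport equations, and the standard half-density and dimension count — identical to the scalar situation and insensitive to the bundle — assigns the orders $-\tfrac14$ to $\T_0$ and $-\tfrac54$ to $\T_1$, the shift reflecting that on each slice $\T_0(t)$ is a zeroth-order and $\T_1(t)$ a $(-1)$st-order pseudodifferential operator on $\Sigma$.

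It then remains to pass from the parametrix $\widetilde\T_j$ to the true solution operators. By construction $L\widetilde\T_j f$ and the Cauchy data of $\widetilde\T_j f$ differ from the required values by terms with smooth kernels; running this through the well-posedness and energy estimates for the Cauchy problem (Theorem~2.3 in \cite[Ch.~IV]{MR618463}, as used in the proof of Theorem~\ref{thm:InhomoCauchyProblem}) shows that $\T_j-\widetilde\T_j$ has a smooth kernel, so $\T_j=\widetilde\T_j$ as elements of $I^{-1/4}$ and $I^{-5/4}$ respectively. Finally, that $\mathrm{WF}'(\T_j)$ is all of $C$ and not a proper subset follows from ellipticity: since $(-\nabla_\nu\T_1)|_\Sigma=\Id$ while $\res_0\T_1=0$, the principal symbol $a_0$ is invertible at the initial point of each null bicharacteristic, and the transport equation — whose solution is an invertible endomorphism, namely $\Gamma$ up to a nonzero scalar — propagates this to all of $C$.

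The step I expect to be the main obstacle is the vector-bundle bookkeeping in the symbol calculus: one must verify that the composition theorem, the Keller--Maslov line bundle, and the whole hierarchy of transport equations survive passage from scalar to $\mathrm{End}(E)$-valued symbols — legitimate precisely because $\sigma_L$ is scalar — and, more subtly, that the first transport equation produces \emph{plain} parallel transport rather than parallel transport conjugated by some spurious endomorphism. This last point is exactly where one needs $L=\Delta^\nabla+V$ with no residual first-order term, i.e.\ $\mathrm{sub}(L)=\omega^\nabla$; without the canonical connection of \cite[Prop.~3.1]{zbMATH00955339} the principal symbol would not be expressible through geometric parallel transport. A secondary technical nuisance is the $t$-dependence of $N$ and $g_t$, which forces the eikonal flow and the amplitudes to be handled as genuinely non-autonomous families and requires an Egorov-type argument to track how symbols move along the flow.
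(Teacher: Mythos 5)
Your route is, in substance, the paper's own: the paper proves Theorem~\ref{fioth} by observing that a normally hyperbolic operator has scalar principal symbol, so the scalar construction of \cite[Ch.~5.1]{Duistermaat:1996aa} (equivalently, H\"ormander's calculus \cite{hormander1971fourier} extended to systems of real principal type as in \cite{dencker1982propagation}) carries over verbatim with $\mathrm{Hom}(E,E)$-valued amplitudes; the Hamilton flow of the characteristic variety is the lightlike geodesic flow, which yields the relation $C$ with its two sheets, and since the subprincipal symbol equals the connection one-form of $\nabla$ the leading transport equation (5.1.11) of \cite{Duistermaat:1996aa} becomes parallel transport along lightlike geodesics, giving the stated principal symbols and the orders $-\frac14$, $-\frac54$. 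Your eikonal/transport analysis, the identification $\mathrm{sub}(L)=\omega^\nabla$ as the reason the symbol is plain parallel transport, and the passage from parametrix to the exact solution operators via well-posedness and energy estimates are exactly the ingredients of that argument.

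There is, however, one step that fails as you state it: the localization. It is not true that $\T_0$ and $\T_1$ are ``determined modulo smoothing operators by $L$ near $\Sigma$'', and a parametrix on a strip $(-\ep,\ep)\times\Sigma$ proves the theorem only on that strip. The assertion of Theorem~\ref{fioth} is global in $X$: the canonical relation $C$ contains points $(x,\xi)$ joined to $T^*\Sigma$ by lightlike geodesics of arbitrary length, and the value of $\T_j f$ at such $x$ depends on $L$ throughout the intervening region, so finite propagation speed gives no ``modulo smoothing'' reduction to a neighbourhood of $\Sigma$. What is needed, and what your write-up omits, is the global propagation step: either construct the parametrix for short time and then extend by composing the short-time evolution operators between Cauchy slices --- each is a Fourier integral operator associated to a canonical graph, and by global hyperbolicity these compositions are proper and transversal, precisely the mechanism used in Lemma~\ref{FIOlemma} --- or invoke the global construction of \cite{Duistermaat:1996aa} or \cite{dencker1982propagation} directly, which is what the paper does. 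With that step supplied (and it is routine), your argument coincides with the paper's proof; your concern about an Egorov-type complication from the $t$-dependence of $N$ and $g_t$ is unnecessary, since the non-autonomous Hamilton flow is handled by the usual generating-function construction.
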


The statement of the theorem can be implied directly from the calculus developed in \cite{hormander1971fourier} applied to matrix valued operators as
demonstrated by Dencker in \cite{dencker1982propagation}. A more direct proof for scalar valued operators  can be found in \cite[Ch.~5.1]{Duistermaat:1996aa}.
This proof carries over literally to the case of operators of real principal type and scalar valued principal symbol.
A simple computation shows that the characteristic Hamiltonian flow of half the principal symbol is the geodesic 
flow in lightlike direction. 

For each $(y,\eta) \in T^*\Sigma$ there is precisely one lightlike future/past directed co-vector $(y, \eta_\pm)$ in $T^* X$ whose pull-back to 
$\Sigma$ by the inclusion map is $(y, \eta)$. 
Thus, $C$ has two connected components $$C^\pm = \{ (x,\xi, y,\eta) \in C \mid \pm \xi \textrm{ is future directed} \}.$$
There is a canonical choice of trivialization of the Keller-Maslov $\mathbb{Z}_4$-principal bundle over $C$. Each connected component $C^\pm$ 
retracts to the set $C^\pm_\Sigma = \{ (x,\xi, x,\eta) \in C^\pm \}$, so it is enough to choose a trivialization on $C^\pm_\Sigma$. 
If an operator $U \in I^m(X \times \Sigma,C;\mathrm{Hom}(E,E))$ is given, then the restriction $\res_\Sigma \circ U$ is a pseudodifferential operator.
We define the trivialization of the Keller-Maslov line bundle such that its restriction to  $C^\pm_\Sigma$ coincides with the natural choice for pseudo-differential operators.
By construction each point in $C^\pm$ is in the orbit of a unique point in $C^\pm_\Sigma$ of the flow induced by the Hamiltonian vector field of half the principal symbol of $L$.
Therefore $C^\pm$ can be considered as a fibre space with each fibre isomorphic to some interval in $\R$ and the fibre coordinate induced by the flow
starting at the base $C^\pm_\Sigma$.
The base manifold $C^\pm_\Sigma$ is isomorphic to $T^*\Sigma$. We pull back the canonical volume form on $T^*\Sigma$ and use the Lebesgue
measure on the fibres to obtain a density $d_C$ on $C = C^+ \cup C^-$. The Lie derivative of the half-density $\sqrt{d_C}$ along the Hamiltonian vector field then vanishes.
We will use the half-density $\sqrt{2 \pi d_C}$ to trivialize the bundle $\Omega_{\frac{1}{2}}$ over $C$. This construction is essentially the same as the one used in \cite[Th. 6.6.1]{MR0388464}.

Assuming the Keller-Maslov bundle and the line bundle $\Omega_{\frac{1}{2}}$ have been trivialized
this way the principal symbols of Fourier integral operators in $I^m(X \times \Sigma,C;\mathrm{Hom}(E,E))$ can be identified with sections in $j_{C}^* \left( E \boxtimes F^* \right)$.
The subprincipal symbol of $L$ is then expressed in terms of the connection one form 
and thus the transport equation (5.1.11) in \cite{Duistermaat:1996aa} becomes the differential equation for parallel transport along lightlike geodesics.

In order to compute the principal symbols $\sigma_{\G}$ and $\sigma_{\T}$ of the Fourier integral operators $\G$ and $\T$ respectively, one first considers the system 
\begin{gather*}
 \res_\Sigma \circ \G = \id, \quad \res_\Sigma \circ (-\nabla_\nu ) \circ \G = 0, \\
 \res_\Sigma \circ \T = 0, \quad \res_\Sigma \circ (-\nabla_\nu ) \circ \T = \id
\end{gather*}
on the level of principal symbols. One obtains
$$\sigma_{\G}(y, \eta_\pm,y,\eta) = \frac{1}{2} \id, \quad \sigma_{\T}(y, \eta_\pm,y,\eta) =  \frac{\rmi}{2}  (\eta_\pm(\nu))^{-1} \id,$$
using $\eta_\pm(\nu)=\mp \|\eta\|$.
The principal symbols $\sigma_{\G}$ and $\sigma_{\T}$ of $\G$ and $\T$
can then be derived directly from the transport equations. If $(x,\xi)$ and $(y, \eta_\pm)$ are in the same orbit of the geodesic flow one thereby obtains
\begin{align}
\sigma_{\G}(x,\xi,y,\eta) = \frac{1}{2}\Gamma_{(x,\xi),(y,\eta_\pm)},\\
\sigma_{\T}(x,\xi,y,\eta) = \frac{\rmi}{2} (\eta_\pm(\nu))^{-1}\Gamma_{(x,\xi),(y,\eta_\pm)}, \label{princsym}
\end{align}
where $\Gamma_{(x,\xi),(y,\eta_\pm)}$ is the operator of parallel transport by the connection $\nabla$ along the lightlike geodesic connecting 
$(y,\eta_\pm)$ and $(x,\xi)$.

\bibliographystyle{plain}

\end{document}